\documentclass[11pt,letter]{amsart}
\usepackage{amssymb,amsmath,epsfig,graphics,mathrsfs,enumerate,verbatim}
\usepackage[pagebackref,colorlinks=true,linkcolor=blue,citecolor=blue]{hyperref}
\usepackage{fancyhdr}
\pagestyle{fancy}
\fancyhead[RO,LE]{\small\thepage}
\fancyhead[LO]{\small \emph{\nouppercase{\rightmark}}}
\fancyhead[RE]{\small \emph{\nouppercase{\rightmark}}}
\fancyfoot[L,R,C]{}



\usepackage{hyperref}
\hypersetup{
 colorlinks   = true,
 urlcolor     = blue,
 linkcolor    = blue,
 citecolor   = red ,
 bookmarksopen=true
}


\usepackage{amsmath}
\usepackage{amsfonts}
\usepackage{amssymb}
\usepackage{amsthm}
\usepackage{epsfig,graphics,mathrsfs}
\usepackage{graphicx}
\usepackage{dsfont}

\usepackage[usenames, dvipsnames]{color}

\usepackage{hyperref}

\usepackage[a4paper,
bindingoffset=0in,
left=0.8in,
right=0.8in,
top=1.3in,
bottom=1.3in,
footskip=.25in]{geometry}

\setcounter{tocdepth}{1}

\def \O {\Omega}
\def \phi {\varphi}

\def \R {\mathbb{R}}

\def \G{\Gamma}

\def \vf{\varphi}

\def \So {\mathscr{S}(\Rn)}


\newcommand{\Rn}{\mathbb R^n}

\newcommand{\p}{\partial}

\newcommand{\bG}{\mathbb {G}}
\newcommand{\bg}{\mathfrak g}

\newcommand{\la}{\lambda}

\numberwithin{equation}{section}

\newcommand{\beq}{\begin{equation}}
\newcommand{\bea}[1]{\begin{array}{#1} }
\newcommand{\eeq}{ \end{equation}}
\newcommand{\ea}{ \end{array}}

\newcommand{\ve}{\varepsilon}

\newcommand{\I}{\mathscr I_{HL}}

\newcommand{\tr}{\operatorname{tr}}


\newcommand{\sa}{\langle}
\newcommand{\da}{\rangle}






\newtheorem{theorem}{Theorem}[section]
\newtheorem{lemma}[theorem]{Lemma}
\newtheorem{proposition}[theorem]{Proposition}
\newtheorem{corollary}[theorem]{Corollary}

\newtheorem{definition}[theorem]{Definition}
\newtheorem{example}[theorem]{Example}

\numberwithin{equation}{section}
\begin{document}


\title[]{Strichartz estimates for a class of Schr\"odinger  equations with a drift}

\keywords{Strichartz estimates. Possibly degenerate Schr\"odinger equations with drift. The $T^\star T$ approach of Ginibre and Velo}
	
\subjclass{35B45, 35Q40, 35Q41, 81Q12}
	
\date{}
	
\begin{abstract}
We establish new intrinsic Strichartz estimates for solutions of the Cauchy problem for a class of possibly degenerate Schr\"odinger equations with a real drift. 
\end{abstract}
	
\author{Federico Buseghin}
	
\address{Laboratoire AGM\\ CY Cergy Paris Universit\'e\\ 2 avenue Adolphe Chauvin, 95300 Pontoise, France}
\vskip 0.2in
\email[Federico Buseghin]{federico.buseghin@cyu.fr}

\author{Nicola Garofalo}
\address{School of Mathematical and Statistical Sciences\\ Arizona State University}\email[Nicola Garofalo]{nicola.garofalo@asu.edu}

\thanks{The work of Federico Buseghin is part of the ERC Starting Grant project FloWAS that has received funding from the European Research Council (ERC) under the European Union's Horizon 2020 research and innovation program (Grant agreement No. 101117820).}

\maketitle
	
\tableofcontents

\section{Introduction and statement of the main results}\label{S:intro}

In his seminal paper \cite[Cor.1]{Stri} Strichartz proved that, if $u$ solves the Cauchy problem in $\Rn\times [0,\infty)$
\begin{equation}\label{CPs}
\p_t u - i \Delta u = F(x,t),\ \ \ \ \ \ \ u(x,0) = \vf(x),
\end{equation}
then there exists a constant $C(n)>0$ such that
\begin{equation}\label{stri}
||u||_{L^{\frac{2(n+2)}n}(\R^{n+1})} \le C(n) \left(||\vf||_{L^2(\Rn)} + ||F||_{L^{\frac{2(n+2)}{n+4}}(\R^{n+1})}\right).
\end{equation}
It is well-known that estimates such as \eqref{stri} play a basic role in the analysis of nonlinear Schr\"odinger equations, see \cite{Caze} and \cite{Tao}. Strichartz established \eqref{stri} by generalizing to a paraboloid in $\R^{n+1}$ the Tomas-Stein restriction theorem for the sphere in $\Rn$. In this context, we note that the exponent $\frac{2(n+2)}{n+4}$ on the right-hand side of \eqref{stri} is the restriction exponent in $\R^{n+1}$, while $\frac{2(n+2)}{n}$ on the left-hand side is its dual. In turn, the (dual of the) estimate \eqref{stri} implies the restriction inequality for the paraboloid, see \cite[pp.100-101]{Sogge}, where a similar argument is presented for the cone. Remarkably, such restriction theorem can also be independently established by the ingenious approach of Ginibre-Velo, see the remark in Section \ref{S:res}.    

In this paper, we establish some new mixed norm estimates for solutions of a class of dispersive equations with a real drift. To introduce the relevant framework, for $n\ge 2$ let $Q, B \in \mathbb M_{n\times n}(\R)$ be two constant matrices, with $Q = Q^\star$ and $Q\ge 0$. For suitable data $\vf$ and $F$, we consider the Cauchy problem  in $\Rn\times [0,\infty)$
\begin{equation}\label{A}
\p_t u - i \tr(Q\nabla^2 u) -  \sa Bx,\nabla u\da   = F(x,t),\ \ \ \ \ \ \ u(x,0) = \vf(x).
\end{equation}
Schr\"odinger equations of the form \eqref{A} naturally arise in various contexts of both physical and mathematical significance, see Section \ref{S:mot} for some motivation and background. Here, we highlight three key features of \eqref{A}:

\begin{itemize} \item[(i)] The matrix $Q$ can be highly degenerate, with even the possibility of $\operatorname{Rank}(Q) =1$ (see Proposition \ref{E:fan}). This is a typical situation in the modeling of quantum wires -- one-dimensional nanostructures where electrons are confined to two dimensions but free to move in the third. When subjected to a magnetic field, the Hamiltonian governing these systems exhibits degenerate kinetic energy, alongside a drift term that shifts the dynamics in a specific direction.

\item[(ii)] Another challenge is that, due to the drift, the PDE in \eqref{A} lacks a variational structure.

\item[(iii)] A third feature, closely related to the drift, is the general absence of a global homogeneous structure (dilations). 
\end{itemize}

\vskip 0.2in

\noindent \textbf{Main assumption.} The following algebraic condition will be assumed throughout the remainder of this paper:

\vskip 0.2in

\noindent \ \ \  \textbf{(H)}\ \ \ \ \ \ $\operatorname{Ker} Q$\ \text{\emph{does not contain any nontrivial invariant subspace of}}\ $B^\star$.

\vskip 0.2in

\noindent This hypothesis is equivalent to the strict positivity of the \emph{controllability Gramian} of the matrices $Q$ and $B$ defined by 
\begin{equation}\label{Ds}
Q(t) = \int_0^t e^{sB}Q e^{sB^\star} ds,
\end{equation} 
see  \cite[Theor.1.2, p.17]{Z}. We now recall that in \cite[Prop.2.4]{GL} it has been recently shown that, when $F = 0$, the problem \eqref{A} admits the following solution 
\begin{equation}\label{erf}
u(x,t)  = 
 \frac{(4\pi
)^{-\frac{n}{2}}e^{-\frac{i \pi n}4}}{\sqrt{\det Q(t)}}   \int_{\Rn} e^{i \frac{\sa Q(t)^{-1}(e^{tB}x-y),e^{tB}x - y\da}{4}} \vf(y) dy,\ \ \ \ t>0. 
\end{equation}
The goal of this work is to use the representation \eqref{erf}  as a starting point for a generalized theory of Strichartz estimates. We aim to show that the broader framework provided by the class of equations \eqref{A} leads to the identification of new a priori inequalities, where the exponents of integrability of the solution are determined exclusively by the \emph{local homogeneous dimension} $D$ of a nilpotent Lie algebra, intrinsically defined by the matrices $Q$ and $B$.
While this idea shares similarities with the works \cite{RS} and \cite{NSW}, it introduces a novel perspective in the context of Strichartz estimates for Schr\"odinger equations such as those in \eqref{A}, where the associated Hamiltonian may exhibit significant degeneracies. 

\noindent Formula \eqref{erf} highlights the central role of the function
\begin{equation}\label{V}
t\longrightarrow V(t) \overset{def}{=} \det Q(t),
\end{equation}
in the analysis of the Schr\"odinger operators \eqref{A}. Since when $Q = I_n$ and $B = O_n$, we have $V(t) = t^n$, it is evident that this function can be thought of as a measure of volume. 
For general $Q$ and $B$, the function $V(t)$ offers valuable insights into how the geometry of the problem evolves with time, particularly in the study of dispersive effects and long-time asymptotics. We note in this respect that, if we consider the time-varying, tilted ellipsoids 
\[
\mathscr E(x,t) = \{y\in \Rn\mid \langle Q(t)^{-1}(e^{tB} x - y),e^{tB} x -y\rangle <  1\}
\]
in the imaginary Gaussian in \eqref{erf},
it is easy to recognize that $V(t)$ is connected to the $n$-dimensional volume of $\mathscr E(x,t)$  by the equation
\begin{equation}\label{vol}
V(t) = \omega_n^{-2} \operatorname{Vol}_n(\mathscr E(x,t))^2.
\end{equation}
For these reasons, we will henceforth refer to $V(t)$ in \eqref{V} as the \emph{volume function}. 

\vskip 0.2in


\noindent To continue our discussion, we recall that in the paper \cite{LP} it was shown that  \textbf{(H)} is equivalent to having $Q$ and $B$ in the following \emph{canonical form}
\begin{equation}\label{QB}
Q = \begin{pmatrix} Q_0 & O_{p_0\times{(n-p_0)}}
\\
O_{{(n-p_0)}\times p_0} & O_{(n-p_0)\times (n-p_0)}\end{pmatrix},
\ \ \ \ \ \ \ \ 
B = \begin{pmatrix} \star & \star & \cdot & \cdot & \cdot & \star & \star
\\
B_1 & \star & \cdot & \cdot & \cdot & \star & \star
\\
0 & B_2 & \star & \cdot & \cdot &  \star & \star
\\
\cdot & \cdot & \cdot & \cdot & \cdot & \cdot & \cdot
\\
\cdot & \cdot & \cdot & \cdot & \cdot & \cdot & \cdot
\\
\cdot & \cdot & \cdot & \cdot & \cdot & \cdot & \cdot
\\
0 & 0 & \cdot & \cdot & \cdot &  B_r & \star
\end{pmatrix},
\end{equation}
where $Q_0 = Q_0^\star$ is a strictly positive $p_0\times p_0$ matrix, and for $j=1,...,r$, $B_j$ is a $ p_j \times p_{j-1}$ matrix such that $\operatorname{Rank}(B_j) = p_j$,  with 
\[
p_0\ge p_1\ge ... \ge p_r \ge 1,\ \ \  \text{and}\ \ \ p_0 + p_1 + ... + p_r = n.
\]
The matrices designated by a $\star$ in the above expression of $B$ can be arbitrary. When $Q$ itself is invertible, and therefore $p_0 = n$,  we have $p_j = 0$ for $j=1,...,r$, and $B$ coincides with the $\star$ in the upper left corner. We remark that $B$ in \eqref{QB} need not be nilpotent (see for instance the example \eqref{ex} below). Another important point is that the canonical form \eqref{QB} must be interpreted dynamically. Specifically, the canonical representation of $B$, and consequently the range of the Strichartz estimates, changes when $Q$ is altered. This phenomenon is clearly illustrated in Proposition \ref{E:fan}.

\medskip

\noindent Throughout the rest of the paper, we assume that $Q$ and $B$ in \eqref{A} are in the canonical form \eqref{QB}. 

\medskip

\begin{definition}\label{D:hd}
We call the number  
\begin{equation}\label{hd}
D = p_0+3p_1+\ldots+(2r+1)p_r
\end{equation}
the \emph{local homogeneous dimension} of the Schr\"odinger operator \eqref{A}. 
\end{definition}

\medskip

While the motivation for Definition \ref{D:hd} will be clarified in Section \ref{S:prelim}, we note here that the number $D$ will play a pervasive role in our results. Observe that \eqref{hd} implies that:
\begin{itemize}  
\item[(a)] $D\ge p_0+p_1+\ldots+p_r = n\ge 2$;
\item[(b)] If $Q$ is invertible, we take $p_j=0$ for $j=1,...,r$, and therefore $D =n= p_0$;
\item[(c)] When $Q$ is not invertible, we have $D>n$. This immediately follows from \eqref{hd} since $p_0<n$;
\item[(d)] An elementary dimension counting argument shows that the maximum value of $D$ is given by 
\begin{equation}\label{Dmax}
D= \sum_{k=1}^{n} (2k - 1) = n^2,
\end{equation}
\end{itemize}
see also \eqref{Dmaxx}.

\vskip 0.2in

\noindent To illustrate Definition \ref{D:hd}, consider for instance the degenerate Schr\"odinger equation in $\R^3\times \R$
\begin{equation*}
\p_t u = i (\p_{xx} u + \p_{yy} u) + x \p_x u + y \p_z u.
\end{equation*}
The corresponding matrices are in canonical form
\begin{equation}\label{ex}
Q = \begin{pmatrix} 1 & 0 & 0 \\ 0 & 1 & 0\\ 0 & 0 & 0 \end{pmatrix},\ \ \ B = \begin{pmatrix} 1 & 0 & 0 \\ 0 & 0 & 0\\ 0 & 1 & 0 \end{pmatrix} = \begin{pmatrix} \star_1 & \star_2\\ B_1& \star_3\end{pmatrix},
\end{equation}
with 
\[
Q_0 = \begin{pmatrix} 1 & 0 \\ 0 & 1\end{pmatrix},\ \ \star_1 = \begin{pmatrix} 1 & 0 \\ 0 & 0\end{pmatrix}, \ \star_2 \begin{pmatrix} 0 \\ 0\end{pmatrix},\ \ B_1 = (0\ 1),\ \ \star_3 = (0).
\]
Note that $B$ is not nilpotent and therefore, based on the results in \cite{Ka2}, \cite{LP}, there can be no global dilations attached to the PDE. 
However, following Definition \ref{D:hd}, it is possible to assign a local homogeneous dimension $D$. Since $p_0 = \operatorname{Rank}(Q_0)= 2$ and $p_1 = \operatorname{Rank}(B_1)= 1$, we find that $D = p_0 + 3 p_1 = 5$. 
This number will determine the integrability range in the Strichartz estimate which results from an application of Theorem A below. Additionally, an exponential weight will arise from the contribution of the eigenvalue $\la =1$ of $B$, but this will be explained in more detail later.

\medskip

\noindent From now on, the letter $D$ will be used exclusively to represent the number defined by \eqref{hd}. 

\begin{definition}\label{D:admissible}
Given a number $r$ such that $2\le r < \infty$ when $D = 2$, or 
$2\le r \le \frac{2D}{D-2}$ when $D> 2$, 
we say that the pair $(q,r)$ is \emph{admissible} for \eqref{A} if one has
\begin{equation}\label{admissible}
\frac 2q = D\left(\frac 12 - \frac 1r\right).
\end{equation}
\end{definition}

\medskip

\noindent We note that in \eqref{admissible} we can take 
 \begin{equation}\label{rr}
q = r = \frac{2(D+2)}D,\ \ \ \ \ \ \ q' = r' = \frac{2(D+2)}{D+4}.
\end{equation}
This follows from observing that the pair $(r,r)$ is admissible for \eqref{A}, provided that $r$ is given by \eqref{rr}. Note that, for such $r$, we have $r>2$, and that $r<\frac{2D}{D-2}$ when $D>2$. We call the numbers $r, r'$ in \eqref{rr} the \emph{Strichartz pairs} relative to $D$. The motivation for this name is that, as noted in (b) above, when $Q$ in \eqref{QB} is invertible, we have $D = n$, and the corresponding pair is 
\begin{equation}\label{stripairn}
q = r = \frac{2(n+2)}n,\ \ \ \ \ \ \ q' = r' = \frac{2(n+2)}{n+4},
\end{equation}
i.e., the exponents in the original Strichartz inequality \eqref{stri}.

\vskip 0.2in

Theorems A and B below are distinguished by the following different assumptions on the behavior of $V(t)$ defined by \eqref{V}.

\vskip 0.2in

\noindent \textbf{Hypothesis (A).} There exists $\gamma>0$ such that for every $t> 0$ the following inequality holds:
\begin{equation}\label{H0}
V(t) \ge \gamma\ t^{D} e^{t \tr B}.
\end{equation}

\vskip 0.2in

\noindent \textbf{Hypothesis (B).} There exists $\gamma>0$ and a number $2\le D_\infty<D$ such that for every $t> 0$ the following inequality holds:
\begin{equation}\label{HB}
V(t) \ge \gamma\ \min\{t^{D}, t^{D_\infty}\}.
\end{equation}

\vskip 0.2in

The relevant findings can be summarized as follows:
\begin{itemize}
\item[(a)] Theorem A applies when Hypothesis (A) holds.
\item[(b)] Theorem B applies when Hypothesis (B) holds.
\end{itemize}

\vskip 0.2in

\noindent We are finally ready to state our main a priori estimates for solutions of the Cauchy problem \eqref{A}. 

\vskip 0.2in

\noindent \textbf{Theorem A.}\label{T:strichartzone}\ 
\emph{Assume \eqref{H0} in} Hypothesis (A). \emph{Suppose that $r>2$ and that the pair $(q,r)$ be admissible for \eqref{A}, with $r < \frac{2D}{D-2}$ when $D> 2$. There exists $C = C(n,r)>0$ such that if $u$ solves \eqref{A}, with $\vf\in L^2(\Rn)$ and $F$ such that $||e^{\frac{\tr B}{2} t} ||F(\cdot,t)||_{L^{r'}_x}||_{L^{q'}_t}<\infty$, then the following Strichartz estimate holds}
\begin{align}\label{strichartzone}
& \bigg(\int_\R e^{\frac{q\tr B}2 t} \big(\int_{\Rn}|u(x,t)|^r dx\big)^{\frac qr} dt\bigg)^{\frac 1q} \le C \bigg\{\|\vf\|_{L^2(\Rn)}
 + \bigg(\int_\R e^{\frac{q' \tr B}2 t} \big(\int_{\Rn}|F(x,t)|^{r'} dx\big)^{\frac{q'}{r'}} dt\bigg)^{\frac 1{q'}}\bigg\}.
\end{align}

\vskip 0.2in

\noindent We emphasize that the estimate \eqref{strichartzone} holds true for the Strichartz pairs relative to $D$, i.e., when $q=r$ are as in \eqref{rr}. If, additionally, $\tr B = 0$, we obtain 
\begin{equation}\label{strichartze}
||u||_{L^{\frac{2(D+2)}D}(\R^{n+1})} \le C \left(||\vf||_{L^2(\Rn)} + ||F||_{L^{\frac{2(D+2)}{D+4}}(\R^{n+1})}\right).
\end{equation}
This estimate applies, in particular, to the case when the matrix $B$ in \eqref{A} takes the form \eqref{barB}. In this case, the non-isotropic dilations \eqref{sdl} imply that \eqref{strichartze} is best possible.

It is worth noting that, even in the non-degenerate case when $Q>0$, in which $D=n$, the estimate \eqref{strichartze} generalizes \eqref{stri}, since it allows for a drift, see Example \ref{E:inv}, or the case $k=n$ of Example \ref{E:fan}. Similarly, the more general inequality \eqref{strichartzone} extends the well-known mixed-space Strichartz estimates  in \cite{GVstrich}, \cite{Caze}. However, the full scope of Theorem A goes well beyond the case of an invertible $Q$. For a more comprehensive discussion of such scope, we refer the reader to Sections \ref{S:truth}, \ref{S:volume} and \ref{S:example}. We next introduce our second main result.

\vskip 0.2in

\noindent \textbf{Theorem B.}\label{T:strichartzoneKra}\ 
\emph{Suppose that \eqref{HB} in} Hypothesis (B) \emph{hold. Assume that $r>2$, and that $r<\frac{2D}{D-2}$ when $D>2$. Suppose further that $(q,r)$ satisfy \eqref{admissible}, and that $1<q_{\infty} <\infty$ is such that
\begin{align}\label{inftyadm}
\frac{2}{q_{\infty}} \le D_\infty(\frac{1}{2}-\frac{1}{r}).
\end{align} 
Then there exists $C = C(n,r)>0$ such that for every $\vf\in L^2(\R^{n})$ and $F$ such that \newline $||e^{\frac{\tr B}{2} t} ||F(\cdot,t)||_{L^{r'}_x}||_{L^{q'}_t\cap L^{q_{\infty}'}_{t}}<\infty$, one has}
\begin{align}\label{strichartzoneK0}
\|e^{\frac{\tr B}2 t} ||u(\cdot,t)||_{L^r(\Rn)}\|_{L^{q}_{t}+L^{q_{\infty}}_t} \le C \bigg\{&  \ \|\vf\|_{L^2(\Rn)}
+ ||e^{\frac{\tr B}{2} t} ||F(\cdot,t)||_{L^{r'}_x}||_{L^{q'}_t\cap L^{q_{\infty}'}_{t}}\bigg\}.
\end{align}

\vskip 0.2in

\subsection{Validity of Hypothesis (A) or (B)}\label{S:truth} 
Having presented our main results, we next show that Hypothesis (A) or (B) is generically satisfied in \emph{all} possible scenarios for the matrices $Q$ and $B$. Our first result in this direction is the following.

\begin{theorem}\label{T:trB}
Suppose that $Q$ and $B$ satisfy \emph{one} of the following hypothesis \emph{(i)-(iii)}:
\begin{itemize}
\item[(i)] $\sigma(B)\not\subset i\R$;
\item[(ii)] $\sigma\{B\}\subset i \R$ and $\operatorname{Rank}(Q) = n$; 
\item[(iii)] $B$ is nilpotent and of the form 
\begin{equation}\label{barB}
\overline B = \begin{pmatrix} 0 & 0 & \cdot & \cdot & \cdot & 0 & 0
\\
B_1 & 0 & \cdot & \cdot & \cdot & 0 & 0
\\
0 & B_2 & \cdot & \cdot & \cdot &  0 & 0
\\
\cdot & \cdot & \cdot & \cdot & \cdot & \cdot & \cdot
\\
\cdot & \cdot & \cdot & \cdot & \cdot & \cdot & \cdot
\\
\cdot & \cdot & \cdot & \cdot & \cdot & \cdot & \cdot
\\
0 & 0 & \cdot & \cdot & \cdot &  B_r & 0
\end{pmatrix}.
\end{equation}
\end{itemize}
Then \eqref{H0} in \emph{Hypothesis (A)} holds.
\end{theorem}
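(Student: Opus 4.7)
The approach hinges on the identity
\begin{equation*}
V(t) = e^{t\operatorname{tr} B}\,\det S(t),\qquad S(t) := e^{-tB/2}\,Q(t)\,e^{-tB^\star/2} = \int_{-t/2}^{t/2} e^{uB}\,Q\,e^{uB^\star}\,du,
\end{equation*}
which recasts Hypothesis (A) as the scale-free lower bound $\det S(t) \ge \gamma\,t^D$ for all $t>0$. The universal preliminary step, valid in all three cases, is to establish the small-time asymptotic $\det S(t) = c_0\,t^D + o(t^D)$ as $t \to 0^+$ with $c_0>0$: I would derive it from the canonical form \eqref{QB} via the non-isotropic dilations $\delta_\lambda = \operatorname{diag}(\lambda I_{p_0}, \lambda^3 I_{p_1}, \ldots, \lambda^{2r+1} I_{p_r})$ under which the principal nilpotent part $\overline B$ of $B$ satisfies $\delta_\lambda \overline B\,\delta_\lambda^{-1} = \lambda^2\,\overline B$ and $\delta_\lambda Q\,\delta_\lambda = \lambda^2 Q$, while the remaining blocks of $B$ are of strictly lower order under $\delta_\lambda$; the leading coefficient $c_0$ is the value of $\det S(1)$ computed with $B$ replaced by $\overline B$, positive by the controllability characterization recalled after \eqref{Ds}.

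Case \emph{(iii)} is then immediate: when $B=\overline B$ the identity $S(\lambda^2 t) = \delta_\lambda\,S(t)\,\delta_\lambda$ is exact rather than asymptotic, giving $\det S(t) = \det S(1)\cdot t^D$ globally, and $\operatorname{tr}\overline B = 0$ yields Hypothesis (A) with $\gamma = \det S(1)$. For case \emph{(ii)}, where $Q>0$ (so $D=n$) and the reality of $B$ combined with $\sigma(B)\subset i\R$ forces $\operatorname{tr} B = 0$, the target $V(t) \ge \gamma\,t^n$ follows from $Q(t) \ge c\,t\,I_n$. Writing $\langle Q(t)\xi,\xi\rangle = \int_0^t |Q^{1/2} e^{sB^\star}\xi|^2\,ds$ and using the Jordan decomposition $B^\star = S_0 + N_0$ into commuting semisimple (purely imaginary spectrum) and nilpotent parts, one has $|e^{sB^\star}\xi|^2 \ge c\,|e^{sN_0}\xi|^2$ because $e^{sS_0}$ is similar to a one-parameter group of orthogonal transformations; the remaining polynomial lemma $\int_0^t |e^{sN_0}\xi|^2\,ds \ge c'\,t|\xi|^2$ follows from a finite-dimensional compactness argument applied, after rescaling $s=tu$, to the matrix $\int_0^1 e^{tuN_0^\star}e^{tuN_0}\,du$, whose smallest eigenvalue one shows is bounded below uniformly in $t$ via an analysis of the Jordan-block limit as $t \to \infty$.

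Case \emph{(i)} is the main obstacle and I would split it according to the sign of $\operatorname{tr} B$. Beyond the small-time asymptotic, the key tool is the monotonicity inequality
\begin{equation*}
V(t)\ge e^{t\operatorname{tr} B}\,V(t/2),
\end{equation*}
obtained by restricting the integration range of $Q(t)$ to $[t/2,t]$ and taking determinants. If $\operatorname{tr} B > 0$ the iteration of this inequality feeds on itself: combined with the small-time bound it yields $V(t)\ge \gamma\,t^D e^{t\operatorname{tr} B}$ for all $t$ larger than the threshold $t_* = 2D\log 2/\operatorname{tr} B$, past which the gained exponential factor $e^{(t/2)\operatorname{tr} B}$ dominates the $2^{-D}$ loss of each iteration, and the intermediate range $(0,t_*]$ is handled by continuity. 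If $\operatorname{tr} B < 0$ the target $t^D e^{t\operatorname{tr} B}$ is bounded on $(0,\infty)$, so monotonicity $V(t)\ge V(T_0)$ for $t\ge T_0$ suffices. The remaining subcase $\operatorname{tr} B = 0$ with $\sigma(B)\not\subset i\R$ is where I expect the hardest step: here one must show $V(t)\ge \gamma\,t^D$ globally while the iteration loses a factor $2^{-D}$ at each step. I would address this through the $B$-invariant spectral splitting $\R^n = V_-\oplus V_0\oplus V_+$ into generalized eigenspaces by sign of real part, noting that the constraint $\operatorname{tr} B = 0$ with $\sigma(B)\not\subset i\R$ forces pairings of eigenvalues of opposite sign along which the integrand in $S(t)$ produces $\sinh$-type factors bounded below by $t$, while $V_0$ contributes the residual nilpotent polynomial factors; the main technical difficulty is to combine these two contributions multiplicatively into a single lower bound $\det S(t)\ge \gamma\,t^D$ uniform in $t$.
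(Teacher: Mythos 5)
Your reformulation $V(t)=e^{t\operatorname{tr}B}\det S(t)$, your treatment of case (iii) via exact homogeneity, your direct proof of $Q(t)\ge c\,t\,I_n$ in case (ii) (which is in fact cleaner than the paper's route through Proposition \ref{P:ii}, and your polynomial lemma $\int_0^1|\sum_k u^k v_k|^2\,du\ge c_m|v_0|^2$ is the right tool there), and your handling of the subcases $\operatorname{tr}B>0$ and $\operatorname{tr}B<0$ of case (i) are all correct; the last of these is actually simpler than the paper's Case (1), since monotonicity of $V$ plus boundedness of $t^De^{t\operatorname{tr}B}$ on $[1,\infty)$ settles it at once.

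The genuine gap is exactly where you flag it: the subcase $\operatorname{tr}B=0$, $\sigma(B)\not\subset i\R$ of case (i). Your plan to prove the sharp bound $\det S(t)\ge\gamma t^D$ by combining contributions from the spectral splitting $V_-\oplus V_0\oplus V_+$ multiplicatively is both unfinished and harder than necessary: a determinant does not factor over a non-orthogonal invariant splitting, and controlling the cross terms is precisely the difficulty you admit you cannot resolve. The idea you are missing is that in this subcase one should not aim for $t^D$ at all, but for \emph{exponential} growth. Since $\operatorname{tr}B=0$ and some eigenvalue has nonzero real part, the eigenvalues with negative real part must be balanced by at least one eigenvalue $\la_1$ with $\Re\la_1>0$. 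The paper then invokes Proposition \ref{P:boom0}(ii) (a quantitative lower bound imported from \cite{GTma}, \cite{BGT}): if $\operatorname{tr}B\ge 0$ and $L_0=\max\{\Re\la\mid\la\in\sigma(B)\}>0$, then $V(t)\ge c_0e^{2L_0t}$ for $t\ge 1$. This trivially dominates $\gamma t^D$, and together with the small-time bound \eqref{vicinoa0} it closes the case. So your architecture is salvageable, but as written the hardest branch of case (i) rests on an argument you have not produced and whose stated target (a uniform power-law lower bound on $\det S(t)$ assembled block by block) is not the one that actually works; you need either to import a result of the type of Proposition \ref{P:boom0}(ii) or to prove the exponential lower bound $V(t)\gtrsim e^{2L_0 t}$ yourself, e.g.\ by restricting the quadratic form $\langle Q(t)\xi,\xi\rangle=\int_0^t|Q^{1/2}e^{sB^\star}\xi|^2ds$ to a suitable generalized eigenvector direction and then converting this single-direction growth into a determinant bound, which is itself a nontrivial step.
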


\vskip 0.2in

\noindent Concerning the hypothesis (iii) in Theorem \ref{T:trB}, we emphasise that, when we write  $\overline B$ in the form \eqref{barB}, we intend that such matrix has been obtained from $B$ in the canonical form \eqref{QB}, by replacing all the blocks designated by a $\star$ with a zero matrix of the same dimensions. The matrix $\overline B$ has a special interest since, by the results in \cite{Ku,Ku2}, \cite{LP}, we infer that, when $F = 0$ in \eqref{A}, the resulting  Schr\"odinger equation
\begin{equation}\label{AA}
\p_t u = i \operatorname{tr}(Q \nabla^2 u) +  \langle B x,\nabla u\rangle 
\end{equation}
 is homogeneous of degree two with respect to a family of non-isotropic dilations \emph{if and only if} the matrix of the drift is in the form \eqref{barB}. In such case, the relevant dilations are constructed as follows. The ranks $p_0, p_1,...,p_r$ of the matrices $Q_0, B_1,...,B_r$ in \eqref{QB} induce a corresponding stratification of the ambient space 
\[
\Rn=\R^{p_0}\oplus\R^{p_1}\oplus\cdots\oplus\R^{p_r}.
\]
We accordingly denote by $x=\left(x^{(p_0)},x^{(p_1)},\ldots,x^{(p_r)}\right)$ a generic point of $\Rn$, and define the non-isotropic dilations  
\begin{equation}\label{sdl}
\delta_\lambda(x,t) = \left(\lambda
x^{(p_0)},\lambda^3
x^{(p_1)},\ldots,\lambda^{2r+1}x^{(p_r)},\lambda^2t\right).
\end{equation}
Then the free Schr\"odinger equation 
\begin{equation}\label{AAdil}
\p_t u = i \operatorname{tr}(Q \nabla^2 u) +  \langle \overline B x,\nabla u\rangle 
\end{equation}
is invariant under the scalings \eqref{sdl}. We stress that, in such case, the number $D$ in \eqref{hd} is precisely the \emph{homogeneous dimension}, in the sense of \cite{FS}, \cite{RS}, \cite{NSW}, of the spatial component of the dilations \eqref{sdl}. We also note that, when $Q$ is invertible, the matrix $\overline B$ must necessarily be the zero matrix. 
In Section \ref{S:volume} we will say more about the volume function $\overline V(t)$ associated with \eqref{AAdil}.

\vskip 0.2in

The assumption (ii) in Theorem \ref{T:trB} does not encompass the case in which $\sigma(B) \subset i \R$, but $\operatorname{Rank}(Q)<n$ (hence, $Q$ is not invertible). Under such circumstances, the estimate \eqref{H0} can fail significantly, as the Example \ref{E:imspec} shows. This example serves as a paradigm for the situation covered by the next result. 

\begin{theorem}\label{T:B}
Suppose that $\operatorname{Rank}(Q)<n$, and that $B$ is similar to a skew-symmetric matrix. Then \eqref{HB} in \emph{Hypothesis (B)} holds with $D_\infty = n$.
\end{theorem}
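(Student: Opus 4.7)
The strategy is to analyze $V(t) = \det Q(t)$ separately in two regimes: for small $t \in (0,1]$ I would establish $V(t) \ge \gamma_1 t^D$, and for $t \ge 1$ I would establish $V(t) \ge \gamma_2 t^n$. Since $D \ge n$ by observation (a) following Definition \ref{D:hd}, combining these two bounds gives \eqref{HB} with $D_\infty = n$, as desired.

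The large-time bound is where the skew-symmetric hypothesis genuinely enters. Write $B = PSP^{-1}$ with $S^\star = -S$; then $\|e^{sB}\|,\,\|e^{sB^\star}\| \le M := \|P\|\,\|P^{-1}\|$ for every $s \in \R$, and in particular $|e^{kB^\star}v| \ge |v|/M$ for all $k \ge 0$ and $v \in \R^n$. Decomposing the integral \eqref{Ds} over unit sub-intervals and using that $Q(1)$ is strictly positive definite by Hypothesis \textbf{(H)}, I would write, with $N = \lfloor t \rfloor$,
\begin{equation*}
Q(t) \;\ge\; \sum_{k=0}^{N-1} e^{kB}Q(1)e^{kB^\star} \;\ge\; \lambda_{\min}(Q(1)) \sum_{k=0}^{N-1} e^{kB}e^{kB^\star} \;\ge\; \frac{\lambda_{\min}(Q(1))\,N}{M^2}\,I.
\end{equation*}
Since $N \ge t/2$ for $t \ge 1$, this yields $Q(t) \ge c\,t\,I$ and hence $V(t) \ge c^{n} t^n$.

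For the small-time bound, the plan is to use the anisotropic rescaling $D_t := \operatorname{diag}\bigl(t^{1/2}I_{p_0},\,t^{3/2}I_{p_1},\,\ldots,\,t^{(2r+1)/2}I_{p_r}\bigr)$ matching the exponents of \eqref{sdl}, so that $(\det D_t)^2 = t^D$. Changing variable $s = tu$ inside \eqref{Ds} and checking that $tD_t^{-1}BD_t \to \overline{B}$ as $t \to 0^+$, where $\overline{B}$ is the free drift \eqref{barB} obtained from $B$ by erasing all $\star$-blocks, one finds
\begin{align*}
D_t^{-1}Q(t)D_t^{-\star} &\;=\; \int_0^1 \bigl(D_t^{-1}e^{tuB}D_t\bigr) \, Q \, \bigl(D_t^\star e^{tuB^\star}D_t^{-\star}\bigr)\,du \\
&\;\longrightarrow\; \int_0^1 e^{u\overline{B}}\,Q\,e^{u\overline{B}^\star}\,du \;=:\; \overline{Q}(1) \qquad \text{as } t \to 0^+.
\end{align*}
Because $(Q,\overline{B})$ is itself in the canonical form \eqref{QB}, Hypothesis \textbf{(H)} holds for the limiting pair, so $\overline{Q}(1)$ is strictly positive definite. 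From $V(t) = t^D \det\bigl(D_t^{-1}Q(t)D_t^{-\star}\bigr)$ I would then obtain $V(t)/t^D \to \det \overline{Q}(1) > 0$, hence $V(t) \ge \gamma_1 t^D$ on some $(0,t_0]$. Continuity and strict positivity of $V$ on the compact set $[t_0,1]$ (again using \textbf{(H)}) extend this bound to all of $(0,1]$, after possibly shrinking $\gamma_1$.

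The main technical point is the short-time scaling limit: one must verify that $D_t^{-1}Q(t)D_t^{-\star}$ converges to a nondegenerate matrix, which requires carefully exploiting the block structure of \eqref{QB} and checking that \textbf{(H)} is inherited by the pair $(Q,\overline{B})$ (the latter being a straightforward consequence of the injectivity of $B_1^\star,\ldots,B_r^\star$ on their blocks). The large-time piece, in contrast, is essentially soft once one has the uniform boundedness of the orbits $\{e^{sB}\}_{s \in \R}$ afforded by the skew-symmetric hypothesis.
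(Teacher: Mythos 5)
Your proposal is correct, and for the large-time regime it takes a genuinely different and softer route than the paper. The paper proves the sharper asymptotic \eqref{T:Bexp0}, namely $V(t)=\gamma t^n+O(t^{n-1})$, by reducing to a genuinely skew-symmetric $J$, expanding $e^{sJ}$ through the generalized Rodrigues formula \eqref{DecoExpo}, writing $Q(t)=tC+D(t)$ with $D(t)$ bounded, and then proving the delicate positivity of the matrix $C$ in \eqref{CC} via the invariance of $\operatorname{span}\{J_ix,J_i^2x\}$ under $J^\star$ and hypothesis \textbf{(H)}. You instead exploit only the uniform boundedness of the orbits $\{e^{sB}\}_{s\in\R}$ together with the cocycle identity \eqref{mono}, which gives $Q(N)=\sum_{k=0}^{N-1}e^{kB}Q(1)e^{kB^\star}\ge \lambda_{\min}(Q(1))M^{-2}N\,I$ and hence $V(t)\ge c^n t^n$ for $t\ge 1$; all the steps here check out ($Q(1)>0$ by \textbf{(H)}, $|e^{kB^\star}\xi|\ge|\xi|/M$, and monotonicity of $Q(t)$). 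This is shorter and entirely sufficient for the theorem as stated, since \eqref{HB} with $D_\infty=n$ only requires the one-sided bound; what you give up is the precise asymptotic \eqref{T:Bexp0}, which the paper records as being of independent interest (e.g.\ it yields the doubling property $V(2t)\le CV(t)$ for large $t$). For the small-time bound your anisotropic rescaling with $D_t$ is a correct re-derivation of what the paper simply imports as \eqref{vicinoa0} via Proposition \ref{P:asy} (cited from \cite{LP}): the computation $tD_t^{-1}BD_t\to\overline B$ and $tD_t^{-1}QD_t^{-\star}=Q$ is exactly the mechanism behind that result, and the endgame on $[t_0,1]$ by compactness is fine.
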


To prove Theorem \ref{T:B} we establish a delicate precise asymptotic estimate of the volume function which is of independent interest. Precisely, we show that for some $\gamma>0$, one has for $t\to \infty$
\begin{align}\label{T:Bexp0}
V(t)= \gamma t^{n}+O(t^{n-1}).
\end{align}
We observe that \eqref{T:Bexp0} trivially implies the existence of  $C, t_0>0$ such that:
\[
V(2t)\le C V(t),\ \ \ \ \ \  \text{for every}\ t\ge t_0.
\]
We also note that the hypothesis on the matrix $B$ in Theorem \ref{T:B} implies, in particular, that $\sigma(B)\subset i \R$. The reader should also observe that under the assumptions (ii) or (iii) in Theorem \ref{T:trB}, or the hypothesis on $B$ in Theorem \ref{T:B}, we must necessarily have $\tr B = 0$.

\vskip 0.2in

\noindent \underline{The anomalous case:} It is important to note that Theorems \ref{T:trB} and  \ref{T:B} cover nearly all possible scenarios. However, there is one final possibility which is left out, when the matrices $Q$ and $B$ satisfy \emph{all} three of the following assumptions: 
\begin{itemize}
\item[(iv)] $\operatorname{Rank}(Q)<n$; 
\item[(v)] $\sigma(B)\subset i \R$, and $B$\ is not similar to a skew-symmetric matrix; 
\item[(vi)] $B$ is not in the form \eqref{barB}.
\end{itemize}
Since (v) implies that $\tr B=0$, by (i) of Proposition \ref{P:boom0}, we know that there exists $ D_{\infty}\ge 2$ and $\gamma>0$ such that for any $t\ge1$ the volume function satisfies
\begin{align}\label{an}
V(t)\ge \gamma t^{D_{\infty}}.
\end{align}
But under the hypothesis (iv)-(vi)
the function $V(t)$ exhibits a behaviour that defies a single classification. Specifically, in the Example \ref{E:B} we prove that both the possibilities
$D_\infty \ge D$ and $D_\infty < D$ can occur. The situation can be thus summarized:
\begin{itemize}
\item[(A)] If $D_\infty \ge D$, then \eqref{an} implies that \eqref{H0} holds, and Theorem A applies. 
\item[(B)] If instead $D_\infty <D$, then from \eqref{an} we infer that \eqref{HB} holds, and thus Theorem B applies.
\end{itemize}

\medskip

\vskip 0.2in

\subsection{Description of the paper} 
A brief outline of the present work is as follows. In Section \ref{S:mot}, we discuss the motivation and background for our main results. Specifically, we show that a change to conformal self-similar coordinates in the Schr\"odinger equation \eqref{CPs} leads to a problem of the form \eqref{A}. We also explain the emergence of exponential weights in Theorems A and B by showing how they arise naturally through a change to such coordinates in the classical estimates \eqref{ginvel} of Ginibre and Velo.
In Section \ref{S:ext}, starting from the representation \eqref{erf}, we extend the semigroup defined by \eqref{er} to the entire real line and establish Theorem \ref{T:de}. Section \ref{S:T} is dedicated to the proof of Theorem A, drawing on the well-established $T^\star T$ argument introduced by Ginibre and Velo in \cite{GV, GVstrich}, whereas Section \ref{S:TB} is devoted to the proof of Theorem B. Section \ref{S:volume} focuses on the large-time behavior of the volume function 
$V(t)$ defined in \eqref{V}, with the primary goal of proving Theorems \ref{T:trB} and \ref{T:B}. 
The concluding Section \ref{S:example} presents several examples that illustrate the results. Of particular interest is Proposition \ref{E:fan}, as it demonstrates the variety of situations that can arise. Section \ref{S:prelim} collects background results that will be used in the rest of the paper.

As a final note, we mention that this work does not address sharp endpoint estimates, in the spirit of Keel and Tao \cite{KT}, nor other fundamental issues such as short-time existence and regularity, as discussed in \cite{GV, GVstrich}, \cite{Caze}, and \cite{Tao}. These topics, along with potential applications of our results to nonlinear Schr\"odinger equations, will be the focus of a future study.




\vskip 0.2in

\subsection{Notation} 
We use the standard notation $\omega_n = \frac{2\pi^{n/2}}{\G(n/2+1)}$ for the $n$-dimensional measure of the unit ball in $\Rn$. The notation $i\R$ indicates the imaginary axis in the complex plane $\mathbb C$. We denote by $C^\star = [c_{ji}]$ the transpose of a matrix $C=[c_{ij}]$. 
The notation $\operatorname{tr} C$ indicates the trace of $C$, $\sigma(C)$ its spectrum.  When for a matrix we write $C_{h\times k}$, we want to emphasise that its dimensions are $h\times k$. We use such notation for partitioned matrices. The notation $\nabla^2 u$ indicates the Hessian matrix of a function $u:\Rn\times \R\to \R$ with respect to the variable $x\in\Rn$. Given a measurable function $u:\Rn\times \R\to \overline \R$, we denote by $||u||_{L^q(\R,L^r(\Rn))} = \left(\int_{\R} ||u(\cdot,t)||^q_{L^r(\Rn)} dt\right)^{1/q}$. We indicate by $L^{q_{1}} + L^{q_{2}}$ the
Banach space of functions $f$ such that $f = f_{1} + f_{2}$ with $f_{1}\in L^{q_{1}}$ and
$f_{2} \in L^{q_{2}}$, endowed with the norm
\begin{align*}
\|f\|_{L^{q_{1}}+L^{q_{2}}}=\inf_{f=f_{1}+f_{2},f_{1}\in L^{q_{1}},f_{2} \in L^{q_{2}}}\|f_{1}\|_{L^{q_{1}}}+\|f_{2}\|_{L^{q_{2}}}.
\end{align*}
We also need the Banach space $L^{p}\cap L^{q}$, endowed with the norm
\begin{align*}
\|f\|_{L^{p}\cap L^{q}}=\|f\|_{L^{p}}+\|f\|_{L^{q}}.
\end{align*}
We recall that $(L^{p}+L^{q})'=L^{p'}\cap L^{q'}$,
see \cite[Sec. 3]{BS}. The Fourier transform employed in this paper is 
\[
\mathscr F f(\xi) = \hat f(\xi) = \int_{\Rn} e^{-2\pi i\sa \xi,x\da} f(x) dx.
\]


\medskip

\subsection{Acknowledgement.} We thank Dennis Bernstein for kindly providing us with \eqref{Qtilde} and \eqref{trace2}. This led to extending our original two-dimensional version of Lemma \ref{L:ber} to arbitrary dimension. We also thank Carlos Kenig for kindly bringing to our attention the cited works of A. Ionescu.  

\vskip 0.2in

\section{Motivation and context}\label{S:mot}

We begin by discussing a scenario particularly relevant to the study of blow-up formation and dynamics in semilinear Schr\"odinger equations. The following considerations illustrate one possible way in which problems such as \eqref{A} arise
and how the drift contributes to the emergence of exponential weights in the relevant Strichartz estimates.

\subsection{Conformal coordinates and exponential weights}\label{S:exp}
Consider a function $u:\Rn\times [0,T)\to \R$. If we make the change of dependent variable
\begin{equation}\label{zeroo}
u(x,t) = v(y,\tau),\ \ \ \ \text{where}\ \ \ \ (y,\tau) = \Phi(x,t) = (\alpha(t) x,\beta(t)),
\end{equation}
then a computation gives
\begin{equation*}
\p_\tau v + \frac{\alpha'(t)}{\alpha(t)\beta'(t)} \sa y,\nabla v\da - i \frac{\alpha(t)^2}{\beta'(t)} \Delta_y v = \frac{1}{\beta'(t)} \left[\p_t u - i \Delta_x u\right].
\end{equation*}
Imposing the conditions 
\begin{equation}\label{bprimo}
\alpha'(t) = \alpha(t)^3,\ \ \ \ \ \ \beta'(t) = \alpha(t)^2,
\end{equation}
we further obtain 
\begin{equation}\label{duee}
\p_\tau v - i  \Delta_y v  + \sa y,\nabla v\da = \frac{1}{\alpha(t)^2} \left[\p_t u - i \Delta_x u\right].
\end{equation}
Integrating the equations in \eqref{bprimo}, we find
\begin{equation}\label{beta}
\alpha(t) = \frac{1}{\sqrt{2(T-t)}},\ \ \ \ \ \beta(t) = \frac 12 \log \frac{1}{T-t},\ \ \ \ 0\le t<T.
\end{equation}
The transformation \eqref{zeroo}, \eqref{beta} represent a change to \emph{conformal self-similar coordinates}. These coordinates first appeared in the blow-up formation for the heat equation in the papers \cite{We}, \cite{GK}. For the Schr\"odinger equation, they have been extensively used in seminal works such as \cite{MR,MR2,MR3,MRS2,MRS}, see also \cite{Su}.  By \eqref{duee} it is clear that, if $u(x,t)$ solves the problem \eqref{CPs} in $\Rn\times [0,T)$, 
then with 
\[
\tilde F(y,\tau) = 2(T-t) F(x,t),\ \ \ \ \ \ \ \tau_0 = \log \frac 1{\sqrt T},
\]
the function $v(y,\tau)$, defined as in \eqref{zeroo}, solves the Cauchy problem of type \eqref{A} in $\Rn\times [\tau_0,\infty)$ 
\begin{equation}\label{quattroo}
\p_\tau v  - i  \Delta v + \sa y,\nabla v\da= \tilde F(y,\tau),\ \ \ \ \ v(y,\tau_0) = \vf(2\sqrt T y),
\end{equation}
where $Q = - B = I_n$\footnote{We note that the parabolic counterpart of \eqref{quattroo} is the Ornstein-Uhlenbeck operator in \cite{OU}}. 
Note that, although there exist no dilations associated with \eqref{quattroo}, by the invertibility of  $Q$, and by (b) following Definition \ref{D:hd}, we know that the local homogeneous dimension of \eqref{quattroo} is $D = n$. Consequently, by Definition \ref{D:admissible} we know that  for $r>2$, with $r < \frac{2n}{n-2}$ when $n> 2$, the pair $(q,r)$ is \emph{admissible} when
\begin{equation}\label{aqr}
\frac 2q = n(\frac 12 - \frac 1r).
\end{equation}
Note that these are the same as the admissible pairs of Ginibre and Velo for \eqref{CPs} (see \cite{GVstrich} and \cite{Caze}).
Furthermore, since $\sigma(B) = \{-1\}$, we are in the hypothesis (i) of Theorem \ref{T:trB}. Therefore, \eqref{H0} holds and Theorem A applies, giving for \eqref{quattroo}
\begin{align}\label{ou}
& \bigg(\int_{\tilde I} e^{\frac{q\tr B}2 t} \big(\int_{\Rn}|u(x,t)|^r dx\big)^{\frac qr} dt\bigg)^{\frac 1q} \le C \bigg\{\|\vf\|_{L^2(\Rn)}
 + \bigg(\int_{\tilde I} e^{\frac{q' \tr B}2 t} \big(\int_{\Rn}|F(x,t)|^{r'} dx\big)^{\frac{q'}{r'}} dt\bigg)^{\frac 1{q'}}\bigg\},
\end{align}
where $\tilde I = [\tau_0,\infty)$.
It is important to keep in mind that, since $B = - I_n$, we have $\tr B = - n$ in \eqref{ou}.

Confronted with \eqref{ou}, the reader may wonder about the appearance of exponential weights. We want to show next that their presence is \emph{natural}. With this in mind, let us go back to the well-known mixed-norm estimates for \eqref{CPs} 
\begin{equation}\label{ginvel}
\bigg(\int_I \big(\int_{\Rn}|u(x,t)|^r dx\big)^{\frac qr} dt\bigg)^{\frac 1q} \le C \bigg\{\|\vf\|_{L^2(\Rn)}
 + \bigg(\int_I \big(\int_{\Rn}|F(x,t)|^{r'} dx\big)^{\frac{q'}{r'}} dt\bigg)^{\frac 1{q'}}\bigg\},
\end{equation}
where $I = (0,T)$, and the pair $(q,r)$ satisfies \eqref{aqr}. The transformations \eqref{zeroo}, \eqref{beta} give
\begin{align*}
& \bigg(\int_I \big(\int_{\Rn}|u(x,t)|^r dx\big)^{\frac qr} dt\bigg)^{\frac 1q} = \bigg(\int_I \big(\int_{\Rn}|v(\frac{x}{\sqrt{2(T-t)}},\frac 12 \log \frac{1}{T-t})|^r dx\big)^{\frac qr} dt\bigg)^{\frac 1q}
\\
&  = \bigg(\int_I \big(\int_{\Rn}|v(y,\frac 12 \log \frac{1}{T-t})|^r (2(T-t))^{\frac n2} dy\big)^{\frac qr} dt\bigg)^{\frac 1q}.
\end{align*}
We now further make the change of variable $\tau = \frac 12 \log \frac{1}{T-t}$, so that $T-t = e^{-2\tau}$ and $dt = 2 e^{-2\tau} d\tau$. This gives
\begin{align*}
& \bigg(\int_I \big(\int_{\Rn}|u(x,t)|^r dx\big)^{\frac qr} dt\bigg)^{\frac 1q} = C \bigg(\int_{\tilde I} e^{-2\tau(\frac{nq}{2r}+1)} \big(\int_{\Rn}|v(y,\tau)|^r dy\big)^{\frac qr} d\tau\bigg)^{\frac 1q}.
\end{align*}
Keeping \eqref{aqr} in mind, we conclude
\begin{align*}
& \bigg(\int_I \big(\int_{\Rn}|u(x,t)|^r dx\big)^{\frac qr} dt\bigg)^{\frac 1q} = C \bigg(\int_{\tilde I} e^{-\frac{nq}{2}\tau} \big(\int_{\Rn}|v(y,\tau)|^r dy\big)^{\frac qr} d\tau\bigg)^{\frac 1q}.
\end{align*}
Similar considerations show that
\begin{align*}
& \bigg(\int_I \big(\int_{\Rn}|F(x,t)|^{r'} dx\big)^{\frac{q'}{r'}} dt\bigg)^{\frac 1{q'}} = C 
  \bigg(\int_{\tilde I} e^{-\frac{nq'}{2}\tau } \big(\int_{\Rn}|\tilde F(y,\tau)|^{r'} dy\big)^{\frac{q'}{r'}} d\tau\bigg)^{\frac 1{q'}}.
\end{align*}
Since, as we have observed above, $\tr B = - n$, 
we see that  
\[
e^{-\frac{nq}{2}\tau }  = e^{\frac{q\tr B}2 \tau},\ \ \ \ \ e^{-\frac{nq'}{2}\tau } = e^{\frac{q'\tr B}2 \tau}. 
\]
We have thus re-obtained \eqref{ou} from the  classical  Strichartz estimates for the Schr\"odinger equation in \eqref{CPs}.

These computations explain the appearance of the exponential weights in our results. They also show that, via the transformation \eqref{zeroo}, \eqref{beta}, our general Strichartz estimate \eqref{strichartzone} in Theorem A, when specialized to the operator in \eqref{quattroo}, is nothing but  the Ginibre-Velo mixed-norm inequality \eqref{ginvel} in disguise.
 
As a final comment, we add that the presence of the exponential weights is tightly connected to the Lie group invariance in \eqref{Lie}. This is an aspect that echoes a similar feature in the works of A. Ionescu on semisimple Lie groups of real rank one, see \cite{Iomrl, Iojfa, Iojfa2, Ioann,  Ioduke}. We thank C. Kenig for bringing this to our attention.

\subsection{Twisted Laplacians} Another interesting aspect of \eqref{A} is connected with the Cauchy problem for the Schr\"odinger equation on Lie groups of Heisenberg type or, more in general, on a Carnot group $\bG$ of step two. For important contributions we refer the reader to \cite{Mu, BG, BBG, BFG, BF}. Letting $\bg = \mathfrak h \oplus \mathfrak v$ indicate the Lie algebra of such a group, with inner product $\langle \cdot,\cdot\rangle$, and logarithmic coordinates $x = (z,\sigma)$, consider the Kaplan mapping $J: \mathfrak v \to \operatorname{End}(\mathfrak h)$, defined by 
\begin{equation}\label{kap}
\langle J(\sigma)z,z'\rangle = \langle [z,z'],\sigma\rangle = - \langle J(\sigma)z',z\rangle,
\end{equation}
see \cite{Ka}. Clearly, $J(\sigma)^\star = - J(\sigma)$, and one has $\langle J(\sigma)z,z\rangle = 0$. 
With respect to a given orthonormal basis $\{e_1,...,e_m\}$ of $\mathfrak h$, the sub-Laplacian on $\bG$ can be expressed by the formula
\begin{equation}\label{sl}
\mathscr L = \Delta_z + \frac 14 \sum_{\ell,\ell' = 1}^k \langle J(\ve_\ell)z,J(\ve_{\ell'})z\rangle \p_{\sigma_\ell}\p_{\sigma_{\ell'}} + \sum_{\ell = 1}^k  \p_{\sigma_\ell} \Theta_\ell,
\end{equation}
where $\Delta_z$ represents the standard Laplacian in the variable $z = (z_1,...,z_m)$, and for a fixed orthonormal basis $\{\ve_1,...,\ve_k\}$ of $\mathfrak v$, we have let \begin{equation}\label{thetaell0}
\Theta_\ell = \sum_{s=1}^m \langle J(\ve_\ell)z,e_s\rangle \p_{z_s},\ \ \ \ \ \ell = 1,...,k.
\end{equation}
When $\bG$ is of Heisenberg type, one has $J(\sigma)^2 = - |\sigma|^2 I_{\mathfrak h}$, and \eqref{sl} becomes
\[
\mathscr L = \Delta_z + \frac{|z|^2}4 \Delta_\sigma  + \sum_{\ell = 1}^k \p_{\sigma_\ell} \Theta_\ell,
\]
see \cite{Ka} and \cite{CDKR}. 
Consider now the Cauchy problem for the Schr\"odinger equation for \eqref{sl} in $\bG\times (0,\infty)$,
\begin{equation}\label{cp}
\p_t u - i \mathscr L u,\ \ \ \ \ \ \ u(x,0) = \vf(x),\ \  \ \ x\in \bG,
\end{equation}
and take Fourier transform in the central variable $\sigma\in \mathfrak v$, 
\[
\hat u(z,\la,t) = \int_{\mathfrak v} e^{-2\pi i\langle\la,\sigma\rangle} u(z,\sigma,t) d\sigma.
\]
If for a fixed $\la\in \mathfrak v$, we let $\tilde v(z,t) = \hat u(z,\la,t)$, we obtain from \eqref{cp} that such function must satisfy the problem
\begin{equation}\label{cp2}
\p_t \tilde v  = i\left(\Delta_z \tilde v  - \pi^2 |J(\la)z|^2\ \tilde v\right) - 2 \pi  \langle J(\la) z, \nabla_z \tilde v\rangle,
\ \ \ \ \ 
\tilde v(z,0) = \hat \vf(z,\la) \ \ \  z\in \mathfrak h,\ t>0.
\end{equation}
When $\bG$ is the Heisenberg group, or more in general a group of Heisenberg type, the PDE in \eqref{cp2} is the Schr\"odinger equation for the \emph{twisted Laplacian} (for this operator, see \cite{Fo}, \cite{Strjfa}, \cite{Veluma}).
Comparing with \eqref{A}, we see that, for any fixed $\la\in \mathfrak v$, we can represent the PDE in \eqref{cp2} in the form
\begin{equation}\label{cp3}
\p_t \tilde v  = i\left(\Delta_z \tilde v  - \frac{1}4|Bz|^2\ \tilde v\right) +\sa B z, \nabla_z \tilde v\rangle,
\end{equation}
where $B =   
- 2 \pi J(\la)$.  Note that $B^\star = - B$. Skew-symmetric drifts will be thoroughly analyzed in the present work.

\vskip 0.2in

\subsection{Background} It is well-known that the Gramian matrix \eqref{Ds} 
has the property that $Q(t_0) >0$ for some $t_0>0$ if and only if  $Q(t)>0$ for all $t>0$. This follows immediately from the identity 
\begin{equation}\label{mono}
Q(t+s) = Q(t) + e^{tB} Q(s) e^{tB^\star},\ \ \ \ \ \ \ \ s, t >0.
\end{equation}

\noindent This was first recognised by Kalman, who  proved in \cite[Cor. 5.5]{Kal} that either \textbf{(H)} or the invertibility of $Q(t)$ are necessary and sufficient for the complete controllability of the linear system in $\Rn$
\[
y'(t) = B y(t) + A u(t),\ \ \ \ \ \ y(0) = x,\ \ \ \ \ A A^\star = Q.
\] 

\noindent  In PDEs the matrix \eqref{Ds} first arose in connection with the equation 
\begin{equation}\label{AAA}
\p_t u -  \tr(Q\nabla^2 u) -  \sa Bx,\nabla u\da   = 0.
\end{equation}
This equation, along with its various generalizations, has inspired extensive research over the past four decades across several fields, including partial differential equations (especially kinetic PDEs), stochastic differential equations, probability, semigroups, and control theory.
The classical heat equation is obtained from taking $Q = I_n$ and $B = O_n$. The choice $Q = I_n = - B$ gives the Ornstein-Uhlenbeck equation \cite{OU}. Under the hypothesis \textbf{(H)}, the equation \eqref{AAA} admits a notable positive fundamental solution 
\begin{equation}\label{p}
p(x,y,t) = \frac{(4\pi)^{-\frac n2}}{\sqrt{V(t)}} \exp\left( - \frac{\sa Q(t)^{-1}(e^{tB}x-y),e^{tB}x-y\da}{4}\right).
\end{equation}
A special case of the kernel $p(x,y,t)$ first appeared in Kolmogorov's note \cite{Kol}. In the form given in \eqref{p}, it was introduced in Dym's work \cite[p.134]{Dym} on the flow of stochastic differential equations driven by white noise, where the author examined the highly degenerate case when $Q$ has rank one, and $B$ is the companion matrix (see \cite[Sec.5.2]{Bernie} for this notion) associated with the monic polynomial $p(s) = s^n - a_1 s^{n-1}-...-a_{n-1} s - a_n$. 
For general $Q$ and $B$, the 
formula \eqref{p} was derived in the introduction of the fundamental work \cite[p.148]{Ho}. Extending Kolmogorov's result, H\"ormander showed that \textbf{(H)} implies (and is, in fact,  equivalent to) the hypoellipticity of \eqref{AAA}. A partial list of both older and more recent papers on \eqref{AAA} that are pertinent to the present work  includes \cite{OU}, \cite{Kol},  \cite{Bri}, \cite{Ho}, \cite{Ku, Ku2},  \cite{LP}, \cite{Fre}, \cite{GTma} \cite{BGT}.


\section{The Schr\"odinger group}\label{S:ext}

We note that the differential operator on the left-hand side of the equation in \eqref{A}  is  invariant under the non-commutative group law: 
\begin{equation}\label{Lie}
(x,s)\circ (y,t) = (y+ e^{-tB}x,s+t),
\end{equation}
where $x, y\in \Rn$ and $s, t\in \R$. Equipped with \eqref{Lie}, the space $(\R^{n+1},\circ)$ forms a non-Abelian Lie group whose identity element is the origin. Such Lie group invariance suggests to transform the solution $u$ in \eqref{A} (when $F = 0$), according to the equation
\begin{equation}\label{drift}
v(x,t) = u(e^{- t B} x,t).
\end{equation}
The function $v$ in \eqref{drift} is easily seen to satisfy the Cauchy problem without drift 
\begin{equation}\label{equiv}
\p_t v - i \operatorname{tr}(Q'(t) \nabla^2 v) = 0,\ \ \ \ \ v(x,0) = \vf(x),
\end{equation}
where we have denoted by $Q'(t) = e^{tB} Q e^{tB^\star} \ge 0$ the derivative of the matrix defined by \eqref{Ds}.
After taking Fourier transform with respect to $x$ in \eqref{equiv}, one is  led to
\begin{equation}\label{hatv}
\hat v(\xi,t) = \hat \vf(\xi) e^{-4\pi^2 i \sa Q(t) \xi,\xi\da}.
\end{equation}
Since, as noted, \textbf{(H)} is equivalent to $Q(t)>0$ for every $t>0$, it is possible to invert the Fourier transform in \eqref{hatv}, and obtain the following solution formula for \eqref{A} (when $F = 0$) 
\begin{equation}\label{er}
\mathcal T(t) \vf(x)  = 
 \frac{(4\pi
)^{-\frac{n}{2}}e^{-\frac{i \pi n}4}}{\sqrt{V(t)}}   \int_{\Rn} e^{i \frac{\sa Q(t)^{-1}(e^{tB}x-y),e^{tB}x - y\da}{4}} \vf(y) dy,\ \ \ \ t>0, 
\end{equation}
see \cite[Prop.2.4]{GL}. The operator $\mathcal T(t)$ defines a semigroup on $\mathscr S(\Rn)$ which can be uniquely extended to a strongly continuous one on $L^2(\Rn)$. Furthermore, for any $r\ge 2$ and $t>0$ one can extend \eqref{er} to a  bounded operator $\mathcal T(t) : L^{r'}(\Rn)\to L^{r}(\Rn)$ such that for any $\vf\in L^{r'}(\Rn)$ the following dispersive inequality holds (see \cite[Theor. 4.1]{GL})
\begin{equation}\label{dis}
||\mathcal T(t) \vf||_{L^{r}(\Rn)} \le C(n,r)  \frac{e^{-\frac{\tr B}{r} t}}{V(t)^{\frac 12 - \frac 1{r}}}\ ||\vf||_{L^{r'}(\Rn)}.
\end{equation} 
When $Q = I_n$ and $B = O_n$, the matrix in \eqref{Ds} simplifies to $Q(t) = t I_n$, and thus $\det Q(t) = t^n$. In this case, formula \eqref{er} gives the classical kernel of the  Schr\"odinger equation  in \eqref{CPs}, yielding no new insights. However, as shown in Section \ref{S:exp}, when $B\not= O_n$, formula \eqref{er} becomes an interesting object of study, even when $Q$ is invertible.

The purpose of this section is to extend the semigroup defined by \eqref{er} to a group on the whole real line. 
For the next result we refer the reader to e.g. \cite[Lemma 4.1]{HMMS}.

\begin{lemma}\label{LemmaExpansion}
Let $U(t)$ be a $C_{0}$-semigroup and $t_{0}>0$ be such that $U(t_{0})$ is invertible. Then $U(t)$ is invertible for all $t\ge 0$ and it can be extended to a $C_{0}$-group $(U(t))_{t\in \mathbb{R}}$ by setting
\[
U(t)=U(-t)^{-1}, \ \ \ \ \ \ t<0.
\]
\end{lemma}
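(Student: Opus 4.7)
The plan is to proceed in three stages: first establish invertibility of $U(t)$ for every $t\ge 0$, then define $U(t):=U(-t)^{-1}$ for $t<0$ and verify the group law $U(s+t)=U(s)U(t)$ on all of $\mathbb{R}$, and finally upgrade strong continuity from the half-line to the whole line.

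For the invertibility step, fix $t\ge 0$ and pick an integer $n$ with $nt_0\ge t$. The semigroup law, together with the built-in commutativity $U(a)U(b)=U(a+b)=U(b)U(a)$, yields
\[
U(t)\,U(nt_0-t) \;=\; U(nt_0-t)\,U(t) \;=\; U(nt_0) \;=\; U(t_0)^{n},
\]
and the right-hand side is invertible as a power of the invertible operator $U(t_0)$. Thus $U(t)$ possesses both a right and a left inverse, hence is invertible; its inverse is automatically bounded by the bounded inverse theorem.

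Having set $U(t):=U(-t)^{-1}$ for $t<0$, the identity $U(s+t)=U(s)U(t)$ follows by a short case analysis on the signs of $s$, $t$, $s+t$. When $s,t\ge 0$ it is the semigroup property itself; when $s,t<0$ it follows by inverting $U(-s)U(-t)=U(-s-t)$; and the mixed-sign cases reduce to these upon multiplying the semigroup identity by appropriate inverses and using the commutativity already noted. The same bookkeeping gives $U(0)=I$.

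Strong continuity on $\mathbb{R}$ then reduces, via $U(t+h)x-U(t)x=U(t)(U(h)x-x)$, to continuity of $h\mapsto U(h)x$ at the origin. Continuity from the right is the standing $C_0$-hypothesis, so the only point that genuinely needs work, and the expected main obstacle, is continuity from the left at $0$. The crucial observation is that, multiplying the identity $U(\varepsilon)U(t_0-\varepsilon)=U(t_0)$ on the right by $U(t_0)^{-1}$, one obtains for $0<\varepsilon\le t_0$
\[
U(-\varepsilon)\,x \;=\; U(\varepsilon)^{-1}\,x \;=\; U(t_0-\varepsilon)\,U(t_0)^{-1}\,x.
\]
As $\varepsilon\to 0^+$, strong continuity of the original $C_0$-semigroup at $t_0$ gives $U(t_0-\varepsilon)y\to U(t_0)y$ for every $y$; applying this with $y=U(t_0)^{-1}x$ yields $U(-\varepsilon)x\to x$. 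This is the delicate point of the argument: the inverse of a strongly continuous operator-valued function is not strongly continuous in general, but here the semigroup structure allows us to rewrite $U(\varepsilon)^{-1}$ in terms of the forward semigroup evaluated at the fixed invertible time $t_0$, and the desired continuity is recovered.
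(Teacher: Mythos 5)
Your proof is correct. Note that the paper does not actually prove this lemma: it simply cites \cite[Lemma 4.1]{HMMS}, and your argument is a faithful reconstruction of the standard proof given there --- invertibility of each $U(t)$ from $U(t)U(nt_0-t)=U(t_0)^n$, the sign case analysis for the group law, and, for the only genuinely delicate point (strong continuity from the left at $0$), the identity $U(\varepsilon)^{-1}=U(t_0-\varepsilon)U(t_0)^{-1}$ combined with strong continuity of the forward semigroup at $t_0$. You have correctly identified and handled that delicate point, so nothing is missing.
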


\begin{theorem}\label{ExtendingProp}
The semigroup $\{\mathcal T(t)\}_{t\ge 0}$ on $\mathscr S(\Rn)$ defined by \eqref{er} is extended  to a group $\{U(t)\}_{t\in \R}$ by the following formula
\begin{align}\label{ExtendedSemigroup}
U(t)\phi(x)=\begin{cases}
(4\pi)^{-\frac{n}{2}}\frac{e^{i\frac{\pi n}{4}} e^{|t| \tr B}}{\sqrt{V(|t|)}}\int_{\Rn}e^{-i\frac{\langle Q(|t|)^{-1}(x-e^{|t|B}y),x-e^{|t|B} y\rangle}{4}}\phi(y)dy \ \ \ \text{if }t<0,
\\
\phi(x) \ \ \ \ \text{if }t=0,
\\
(4\pi)^{-\frac{n}{2}}\frac{e^{-i\frac{\pi n}{4}}}{\sqrt{V(t)}}\int_{\Rn}e^{i\frac{\langle Q(t)^{-1}(y-e^{tB}x),y-e^{tB}x\rangle}{4}}\phi(y)dy \ \ \ \ \ \ \ \ \ \ \text{if }t>0.
\end{cases}
\end{align}
\end{theorem}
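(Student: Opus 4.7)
The plan is to apply Lemma \ref{LemmaExpansion}: since that lemma automatically upgrades a $C_0$-semigroup to a $C_0$-group as soon as one time evaluation $\mathcal T(t_0)$ is invertible, the main tasks reduce to (a) verifying invertibility of $\mathcal T(t)$ for some (hence, by the semigroup property, every) $t>0$, and (b) computing $\mathcal T(t)^{-1}$ in closed form so as to match the $t<0$ branch of \eqref{ExtendedSemigroup}.

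For (a) I would use the drift-removal substitution already employed in Section \ref{S:ext}. Setting $v(x,t)=u(e^{-tB}x,t)$, formula \eqref{hatv} shows that $\mathcal T(t)$ is the composition of the $\R^n$-diffeomorphism $x\mapsto e^{tB}x$ with the Fourier multiplier of unimodular symbol $e^{-4\pi^2 i\langle Q(t)\xi,\xi\rangle}$. Under hypothesis \textbf{(H)} we have $Q(t)>0$, so this multiplier is a smooth bijection of $\mathscr S(\R^n)$ onto itself, and so is $\mathcal T(t)$; it also extends to a bounded bijection on $L^2(\R^n)$ (up to the factor $e^{-t\tr B/2}$ from the diffeomorphism). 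Lemma \ref{LemmaExpansion} then produces the strongly continuous group $\{U(t)\}_{t\in\R}$ with $U(t)=\mathcal T(-t)^{-1}$ for $t<0$.

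For (b), given $\phi=\mathcal T(t)\vf$ with $t>0$, inverting the Fourier identity yields $\hat\vf(\xi)=\hat v(\xi,t)\,e^{4\pi^2 i\langle Q(t)\xi,\xi\rangle}$ with $v(w,t)=\phi(e^{-tB}w)$, whence formally
\[
\vf(y)=\int_{\R^n}v(w,t)\biggl(\int_{\R^n}e^{4\pi^2 i\langle Q(t)\xi,\xi\rangle+2\pi i\langle y-w,\xi\rangle}\,d\xi\biggr)\,dw.
\]
I would evaluate the inner oscillatory Gaussian by the Fresnel identity
\[
\int_{\R^n}e^{i\pi\langle M\xi,\xi\rangle+2\pi i\langle z,\xi\rangle}\,d\xi=\frac{e^{i\pi n/4}}{\sqrt{\det M}}\,e^{-i\pi\langle M^{-1}z,z\rangle},\qquad M=M^\star>0,
\]
applied with $M=4\pi Q(t)$, which produces the factor $(4\pi)^{-n/2}e^{i\pi n/4}V(t)^{-1/2}$ together with the phase $e^{-\frac{i}{4}\langle Q(t)^{-1}(y-w),y-w\rangle}$. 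A final change of variable $w=e^{tB}z$, with Jacobian $e^{t\tr B}$, reinserts the drift inside the quadratic form ($y-w\mapsto y-e^{tB}z$). Setting $s=-t<0$ and relabeling variables gives exactly the $t<0$ branch of \eqref{ExtendedSemigroup}.

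The principal obstacle is the careful bookkeeping of the Maslov phase: relative to the $t>0$ formula in \eqref{er}, the prefactor must flip from $e^{-i\pi n/4}$ to $e^{+i\pi n/4}$ because the quadratic phase has the opposite sign after inverting the Fourier multiplier, and the drift Jacobian must account for the new factor $e^{|t|\tr B}$. Justifying the Fresnel identity on Schwartz data is routine, either via the standard $\epsilon$-regularization $M\leadsto (1-i\epsilon)M$, $\epsilon\downarrow 0$, or by diagonalizing $Q(t)$ and reducing to the one-dimensional identity $\int_{\R}e^{i\pi a\xi^2}d\xi=e^{i\pi/4}/\sqrt a$ for $a>0$. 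Once those constants are correctly assembled, the group property follows automatically from Lemma \ref{LemmaExpansion}.
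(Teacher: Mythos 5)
Your proposal is correct and follows essentially the same route as the paper: invoke Lemma \ref{LemmaExpansion}, verify invertibility of $\mathcal T(t)$ on the Fourier side via \eqref{eccola}--\eqref{hatv}, invert the unimodular multiplier using the imaginary-Gaussian (Fresnel) identity — the paper's \eqref{gengaussi2} with $A=-iQ(t)$ — and perform the change of variable $y\mapsto e^{tB}y$ to produce the Jacobian $e^{|t|\tr B}$ and the flipped Maslov factor $e^{+i\pi n/4}$. The constants you assemble match \eqref{Tallamenuno} and hence the $t<0$ branch of \eqref{ExtendedSemigroup}.
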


\begin{proof}
In order to apply Lemma \ref{LemmaExpansion} we need to show that for some $t>0$ the formula \eqref{er} defines an invertible operator on $\mathscr S(\Rn)$. For this it is useful to work on the Fourier transform side. From \cite[(2.15)]{GL} we know that for every $t>0$ one has
\begin{equation}\label{eccola}
\widehat{\mathcal T(t)\vf}(\xi) = e^{-t \tr B} \hat \vf(e^{-tB^\star}\xi) \  e^{-4\pi^2 i  \sa Q(t) e^{-tB^\star}\xi,e^{-tB^\star}\xi\da} \overset{def}{=} \hat{g}(\xi).
\end{equation}    
We obtain from \eqref{eccola}
\begin{align}\label{opop}
&\hat{\phi}(\xi)=e^{t \tr B}e^{4\pi^{2}i\langle Q(t)\xi,\xi\rangle}\hat{g}(e^{t B^{\star}}\xi)\implies\phi =\mathscr{F}^{-1}(e^{t \tr B}e^{4\pi^{2}i\langle Q(t)\cdot,\cdot\rangle}\hat{g}(e^{t B^{\star}}\cdot)).
\end{align}
Recall now that for $A\in G\ell(\R,n)$
\[
\hat{g}(A^{\star}\xi)=|\det A|^{-1}\widehat{g \circ A^{-1}}(\xi),
\]
and consequently we have
\begin{align*}
\hat{g}(e^{t B^{\star}}\xi)=e^{-t \tr B}\mathscr F(g\circ e^{-t B})(\xi).
\end{align*}
Inserting the latter two formulas in \eqref{opop}, we thus find
\begin{equation}\label{fi}
\vf = \mathscr{F}^{-1}\left(e^{4\pi^{2}i\langle Q(t)\cdot,\cdot\rangle}\right) \star (g\circ e^{-t B}).
\end{equation}
We next recall that, given $A\in G\ell(\mathbb C,n)$ such that $A^\star = A$ and $\Re A \ge 0$, one has 
\begin{equation}\label{gengaussi2}
\mathscr F\left(\frac{1}{\sqrt{\operatorname{det} A}}(4\pi
)^{-\frac{n}{2}} e^{- \frac{\sa A^{-1}\cdot,\cdot\da}{4}}\right)(\xi) =
e^{- 4 \pi^2  \sa A\xi,\xi\da},
\end{equation}
where $\sqrt{\operatorname{det} A}$ is the unique analytic branch such that $\sqrt{\operatorname{det} A}>0$ when $A$ is real, see \cite[Theor.7.6.1]{Hobook}.
Taking $A=- i Q(t)$, we infer
\[
\mathscr{F}^{-1}\left(e^{4\pi^{2}i\langle Q(t)\cdot,\cdot\rangle}\right) = \frac{(4\pi
)^{-\frac{n}{2}} e^{i\frac{\pi n}{4}}}{\sqrt{V(t)}} e^{- i\frac{\sa Q(t)^{-1}\cdot,\cdot\da}{4}} 
\]
We then have from \eqref{fi}
\begin{align}\label{Tallamenuno}
\vf(x) = & \mathcal T(t)^{-1} g(x) = \frac{(4\pi
)^{-\frac{n}{2}} e^{i\frac{\pi n}{4}}}{\sqrt{V(t)}} \int_{\Rn} e^{- i\frac{\sa Q(t)^{-1}(x-y),(x-y)\da}{4}} g(e^{-t B}y)dy.
\end{align}
If we now define as in Lemma \ref{LemmaExpansion}
\[
U(t) = \begin{cases}
\mathcal T(t)\ \ \ \ \ \ \ \ \ \ \ t\ge 0,
\\
\mathcal T(-t)^{-1}\ \ \ \ \ \ t< 0,
\end{cases}
\]
then from \eqref{Tallamenuno} we reach the desired conclusion \eqref{ExtendedSemigroup}.

\end{proof}

In what follows we will need the following consequence of \eqref{eccola} and \eqref{opop}. 
\begin{corollary}\label{C:eccopop}
Let $U(t)$ be the group given by \eqref{ExtendedSemigroup}, we have
\begin{align}\label{FTsemigrou}
\widehat{U(t)\phi}(\xi)=\begin{cases}
e^{|t| \tr B}e^{4\pi^{2}i\langle Q(|t|)\xi,\xi\rangle}\hat{\phi}(e^{|t|B^{\star}}\xi) \ \ \ \ \ \ \ \ \ \ \ \ \ \ \ \ \text{if }t<0,
\\
\hat{\phi}(\xi) \ \ \ \text{if }t=0
\\
e^{-t \tr B}e^{-4\pi^{2}i\langle Q(t)e^{-tB^{\star}}\xi, e^{-tB^{\star}}\xi\rangle}\hat{\phi}(e^{-tB^{\star}}\xi) \ \ \ \text{if }t>0.
\end{cases}
\end{align}
\end{corollary}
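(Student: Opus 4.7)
The plan is to treat the three cases $t>0$, $t=0$, and $t<0$ separately, each following directly from the computations already carried out in the proof of Theorem \ref{ExtendingProp}.

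First, the case $t>0$ is essentially immediate: since $U(t)=\mathcal T(t)$ by definition, the claimed identity
\[
\widehat{U(t)\phi}(\xi)=e^{-t \tr B}e^{-4\pi^{2}i\langle Q(t)e^{-tB^{\star}}\xi, e^{-tB^{\star}}\xi\rangle}\hat{\phi}(e^{-tB^{\star}}\xi)
\]
is nothing but formula \eqref{eccola} restated with $\phi$ in place of $\vf$. No further argument is needed. The case $t=0$ is tautological, as $U(0)$ is the identity operator and therefore commutes with $\mathscr F$.

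The main content lies in the case $t<0$. Here I would set $s=|t|=-t>0$ and use that, by Lemma \ref{LemmaExpansion} and the definition of $U(t)$ in the proof of Theorem \ref{ExtendingProp}, one has $U(t)=\mathcal T(s)^{-1}$. To compute the Fourier transform of $\mathcal T(s)^{-1}\phi$, I would invoke formula \eqref{opop}, which was derived precisely as the Fourier-side expression of the inverse of $\mathcal T(s)$: starting from $\widehat{\mathcal T(s)\vf}=\hat g$ one obtains
\[
\hat \vf(\xi)=e^{s \tr B}e^{4\pi^{2}i\langle Q(s)\xi,\xi\rangle}\hat{g}(e^{s B^{\star}}\xi).
\]
Applying this with $g=\phi$ and $\vf=\mathcal T(s)^{-1}\phi=U(t)\phi$, and then rewriting $s=|t|$, yields exactly
\[
\widehat{U(t)\phi}(\xi)=e^{|t| \tr B}e^{4\pi^{2}i\langle Q(|t|)\xi,\xi\rangle}\hat{\phi}(e^{|t|B^{\star}}\xi),
\]
as claimed.

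There is really no serious obstacle in this proof: the corollary is a bookkeeping consequence of two formulas already established, namely \eqref{eccola} (which handles $t>0$) and \eqref{opop} (which handles $t<0$ after identifying $U(t)$ with $\mathcal T(|t|)^{-1}$). The only small care required is to keep track of the signs and of the fact that, on the Fourier side, the inverse group element replaces the multiplicative phase $e^{-4\pi^{2}i\langle Q(t)e^{-tB^\star}\cdot,e^{-tB^\star}\cdot\rangle}$ by its complex conjugate evaluated at $e^{sB^\star}\xi$, and the damping factor $e^{-t\tr B}$ by the growth factor $e^{s\tr B}=e^{|t|\tr B}$.
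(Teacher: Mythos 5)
Your proposal is correct and follows exactly the route the paper intends: the paper presents this corollary as an immediate consequence of \eqref{eccola} (covering $t>0$) and \eqref{opop} (covering $t<0$ once $U(t)$ is identified with $\mathcal T(|t|)^{-1}$), which is precisely your argument. The sign and exponential-factor bookkeeping in your final paragraph checks out.
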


It will be convenient to have the following corollary of \eqref{ExtendedSemigroup} whose proof we leave to the reader.

\begin{proposition}\label{P:ft}
Suppose that $\vf\in \So$. Then 
\[
U(t) \vf(x) = 
\begin{cases}
(4\pi
)^{-\frac{n}{2}}  \frac{e^{\frac{i \pi n}4} e^{|t| \tr B}}{\sqrt{V(|t|)}} e^{-i \frac{|Q(|t|)^{-1/2} x|^2}{4}}\mathscr F\left(\vf e^{-i \frac{|Q(|t|)^{-1/2}e^{|t|B} \cdot|^2}{4}}\right)(-(4\pi)^{-1} Q(|t|)^{-1} e^{|t|B^\star} x),\ \ t<0,
\\
(4\pi
)^{-\frac{n}{2}}  \frac{e^{-\frac{i \pi n}4}}{\sqrt{V(t)}} e^{i \frac{|Q(t)^{-1/2} e^{tB} x|^2}{4}}\mathscr F\left(\vf e^{i \frac{|Q(t)^{-1/2} \cdot|^2}{4}}\right)((4\pi)^{-1} Q(t)^{-1} e^{tB} x),\ \ t>0.
\end{cases}
\]
\end{proposition}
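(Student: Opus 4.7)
The plan is to start from the explicit integral representations of $U(t)\vf(x)$ provided by \eqref{ExtendedSemigroup} in Theorem \ref{ExtendingProp} and, in each of the two cases $t>0$ and $t<0$, turn the oscillatory integral into a Fourier transform by completing/expanding the quadratic form that sits in the phase. Since the proposition is essentially a bookkeeping consequence of the formulas already established, no new analytic input is required; the work is to track exponents and arguments.

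First I would treat the case $t>0$. Starting from
\[
U(t)\vf(x)= (4\pi)^{-\frac{n}{2}}\frac{e^{-i\frac{\pi n}{4}}}{\sqrt{V(t)}}\int_{\Rn}e^{i\frac{\langle Q(t)^{-1}(y-e^{tB}x),\,y-e^{tB}x\rangle}{4}}\vf(y)\,dy,
\]
I would expand the quadratic form in the phase as
\[
\langle Q(t)^{-1}(y-e^{tB}x),y-e^{tB}x\rangle = \langle Q(t)^{-1}y,y\rangle - 2\langle y,\,Q(t)^{-1}e^{tB}x\rangle + |Q(t)^{-1/2}e^{tB}x|^{2},
\]
using that $Q(t)^{-1}$ is symmetric. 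The last term is $y$-independent and pulls out of the integral as $e^{i|Q(t)^{-1/2}e^{tB}x|^{2}/4}$. The remaining integral is
\[
\int_{\Rn} e^{-i\frac{\langle y,\,Q(t)^{-1}e^{tB}x\rangle}{2}}\left(\vf(y)\,e^{i\frac{|Q(t)^{-1/2}y|^{2}}{4}}\right)dy.
\]
Comparing the phase factor $e^{-i\langle y,Q(t)^{-1}e^{tB}x\rangle/2}$ with the convention $\mathscr F f(\xi)=\int e^{-2\pi i\langle \xi,y\rangle}f(y)\,dy$ fixed in the Notation, the matching frequency is $\xi=(4\pi)^{-1}Q(t)^{-1}e^{tB}x$. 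This identifies the integral as $\mathscr F\!\left(\vf\,e^{i|Q(t)^{-1/2}\cdot|^{2}/4}\right)\!\left((4\pi)^{-1}Q(t)^{-1}e^{tB}x\right)$ and yields the stated formula for $t>0$.

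For $t<0$, the argument is entirely parallel, starting this time from the first branch of \eqref{ExtendedSemigroup},
\[
U(t)\vf(x)=(4\pi)^{-\frac{n}{2}}\frac{e^{i\frac{\pi n}{4}}e^{|t|\operatorname{tr}B}}{\sqrt{V(|t|)}}\int_{\Rn}e^{-i\frac{\langle Q(|t|)^{-1}(x-e^{|t|B}y),\,x-e^{|t|B}y\rangle}{4}}\vf(y)\,dy.
\]
Expanding the phase the same way, namely
\[
\langle Q(|t|)^{-1}(x-e^{|t|B}y),x-e^{|t|B}y\rangle = |Q(|t|)^{-1/2}x|^{2} - 2\langle y,\,e^{|t|B^{\star}}Q(|t|)^{-1}x\rangle + |Q(|t|)^{-1/2}e^{|t|B}y|^{2},
\]
the purely $x$-dependent term yields the prefactor $e^{-i|Q(|t|)^{-1/2}x|^{2}/4}$, the $y$-dependent quadratic piece becomes part of the function being Fourier-transformed (producing $\vf(y)e^{-i|Q(|t|)^{-1/2}e^{|t|B}y|^{2}/4}$), and matching the linear-in-$y$ phase against the Fourier convention pins down the evaluation point as $-(4\pi)^{-1}Q(|t|)^{-1}e^{|t|B^{\star}}x$, exactly as stated.

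There is no genuine obstacle; the only point to watch is sign consistency between the two branches of \eqref{ExtendedSemigroup} (in particular the sign in the phase and the swap between $e^{tB}$ acting on $x$ versus $e^{|t|B}$ acting on $y$) and the use of symmetry of $Q(|t|)^{-1}$ to move $e^{|t|B}$ to its transpose when reading off the Fourier frequency. Once those are handled carefully, both cases reduce to the elementary identification of a Gaussian-twisted integral with a Fourier transform.
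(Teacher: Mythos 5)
Your approach is the right one, and indeed the only natural one: the paper leaves this proof to the reader precisely because it is the completion-of-the-square computation you carry out, starting from the two branches of \eqref{ExtendedSemigroup} and matching the linear-in-$y$ phase against the Fourier convention. Your treatment of the case $t>0$ is correct in every detail: the cross term is $-2\langle Q(t)^{-1}y,e^{tB}x\rangle=-2\langle y,Q(t)^{-1}e^{tB}x\rangle$ by symmetry of $Q(t)^{-1}$, which pins the frequency at $(4\pi)^{-1}Q(t)^{-1}e^{tB}x$ exactly as stated.

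There is, however, an inconsistency in your $t<0$ case that you should not pass over. Your own (correct) expansion of the phase produces the cross term $-2\langle y,\,e^{|t|B^{\star}}Q(|t|)^{-1}x\rangle$, so matching against $e^{-2\pi i\langle\xi,y\rangle}$ yields the evaluation point $-(4\pi)^{-1}e^{|t|B^{\star}}Q(|t|)^{-1}x$ — with the matrices in that order — not $-(4\pi)^{-1}Q(|t|)^{-1}e^{|t|B^{\star}}x$ as you then assert ``exactly as stated.'' The two expressions coincide only if $Q(|t|)^{-1}$ commutes with $e^{|t|B^{\star}}$, which fails in general (e.g.\ $Q(t)=e^{tB}\tilde Q(t)e^{tB^{\star}}$ gives no such commutation). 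The structural reason for the asymmetry between the two branches is that for $t>0$ the exponential $e^{tB}$ sits on the $x$-slot of the quadratic form, so $Q(t)^{-1}$ lands in front of it, whereas for $t<0$ it sits on the $y$-slot, so its transpose must be moved across $Q(|t|)^{-1}x$ and ends up in front. You should either flag the order in the displayed statement as a misprint and record the corrected frequency $-(4\pi)^{-1}e^{|t|B^{\star}}Q(|t|)^{-1}x$, or supply a justification for the commutation; simply declaring agreement with the statement is not valid. The discrepancy is harmless for the way the proposition is used (in Theorem \ref{T:de} only the modulus of the unimodular prefactors and $|\det|$ of the linear change of frequency variable enter, and these are the same for either order), but the proof as written does not establish the formula in the precise form claimed.
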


Using Proposition \ref{P:ft}, and proceeding as in the proof of \cite[Theor.4.1]{GL}, we can now extend to negative times the dispersive estimate in \eqref{dis}.

\begin{theorem}\label{T:de}
Let $r\ge 2$. For any $\vf\in L^{r'}(\Rn)$ one has
\[
||U(t) \vf||_{L^{r}(\Rn)} \le 
\begin{cases}
C(n,r) \ \frac{e^{\frac{\tr B}{r'} |t|}}{V(|t|)^{\frac 12 - \frac 1{r}}}\ ||\vf||_{L^{r'}(\Rn)},\ \ t<0,
\\
C(n,r) \ \frac{e^{-\frac{\tr B}{r} t}}{V(t)^{\frac 12 - \frac 1{r}}}\ ||\vf||_{L^{r'}(\Rn)},\ \ t>0.
\end{cases}
\]
\end{theorem}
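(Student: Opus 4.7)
The case $t>0$ is already the dispersive estimate \eqref{dis} from \cite[Theorem 4.1]{GL}, so the only new content is the case $t<0$. My plan is to imitate the $t>0$ proof, using Proposition \ref{P:ft} in place of the corresponding Fourier representation that worked for positive times.

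For $t<0$, Proposition \ref{P:ft} writes $U(t)\vf(x)$ as a constant scalar prefactor of the form $(4\pi)^{-n/2} e^{i\pi n/4} e^{|t|\tr B}/\sqrt{V(|t|)}$, times a unimodular phase in $x$, times $\mathscr F(\vf\cdot h)$ evaluated at $-(4\pi)^{-1}Q(|t|)^{-1}e^{|t|B^\star}x$, where $h(y)=e^{-i|Q(|t|)^{-1/2}e^{|t|B}y|^2/4}$ is again unimodular. Taking the $L^r_x$ norm removes the leading phase. I would then change variables $\eta=-(4\pi)^{-1}Q(|t|)^{-1}e^{|t|B^\star}x$, whose Jacobian equals $(4\pi)^n V(|t|)e^{-|t|\tr B}$. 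Since $r\ge 2$, the Hausdorff--Young inequality applies and yields $\|\mathscr F(\vf h)\|_{L^r}\le\|\vf h\|_{L^{r'}}=\|\vf\|_{L^{r'}}$. Collecting the factors gives $V(|t|)^{-(1/2-1/r)}$ and an exponential factor $e^{|t|\tr B(1-1/r)}=e^{|t|\tr B/r'}$, which is exactly the claim.

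An alternative is interpolation between endpoints. From \eqref{ExtendedSemigroup}, the kernel for $t<0$ is bounded in absolute value by $(4\pi)^{-n/2}e^{|t|\tr B}/\sqrt{V(|t|)}$, so Minkowski gives the $L^1\to L^\infty$ bound $\|U(t)\vf\|_{L^\infty}\le C e^{|t|\tr B}V(|t|)^{-1/2}\|\vf\|_{L^1}$. From \eqref{FTsemigrou} and Plancherel, the change of variables $\eta=e^{|t|B^\star}\xi$ contributes a Jacobian $e^{-|t|\tr B}$, which combines with the prefactor $e^{2|t|\tr B}$ to yield $\|U(t)\vf\|_{L^2}=e^{|t|\tr B/2}\|\vf\|_{L^2}$. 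Riesz--Thorin with $\theta=1-2/r$ then produces the combined exponent $\theta+(1-\theta)/2=1/r'$ on $e^{|t|\tr B}$ and $\theta/2=1/2-1/r$ on $V(|t|)^{-1}$, reproducing the asserted bound.

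\emph{Main obstacle.} There is no serious difficulty; the Fourier-side structure encoded in Proposition \ref{P:ft} and Corollary \ref{C:eccopop} does essentially all the work. The one point requiring care is the bookkeeping of the Jacobian $e^{-|t|\tr B}$ coming from the linear change of variables driven by $e^{|t|B^\star}$. This factor is precisely what explains why the exponent on $e^{|t|\tr B}$ is $1/r'$ in the $t<0$ case, as opposed to $-1/r$ on $e^{-t\tr B}$ for $t>0$.
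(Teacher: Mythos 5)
Your proposal is correct and follows essentially the same route as the paper, which simply invokes Proposition \ref{P:ft} and the argument of \cite[Theor.4.1]{GL}: the unimodular phases drop out, the linear change of variables $\eta=-(4\pi)^{-1}Q(|t|)^{-1}e^{|t|B^\star}x$ contributes the Jacobian factor $\big((4\pi)^nV(|t|)e^{-|t|\tr B}\big)^{1/r}$, and Hausdorff--Young (valid since $r'\le 2$) closes the estimate, producing exactly $e^{|t|\tr B/r'}V(|t|)^{-(1/2-1/r)}$. Your alternative interpolation between the $L^1\to L^\infty$ kernel bound and the $L^2$ identity $\|U(t)\vf\|_{L^2}=e^{|t|\tr B/2}\|\vf\|_{L^2}$ is also correct and gives the same exponents.
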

The reader should note the two distinct exponentials on the right-hand side of the inequalities in Theorem \ref{T:de}. At first glance, their discrepancy may seem concerning, but in reality, a small miracle occurs during the proof of Theorem A, and everything will eventually fall into place.

\vskip 0.2in



\medskip




\section{Proof of Theorem A}\label{S:T}

In this section we suitably adapt to the Cauchy problem \eqref{A} the ideas of Ginibre and Velo in \cite{GV, GVstrich}. Our final objective is proving Theorem A. We begin by observing that by the Duhamel principle, given $\vf\in \So$ and $F\in \mathscr S(\R^{n+1})$, the solution to \eqref{A} is given by 
\begin{equation}\label{duhamel}
u(x,t) = U(t)\vf(x) + \int_0^t U(t-s)(F(\cdot,s))(x) ds.
\end{equation}
We begin by introducing an operator $T^\star :\So\longrightarrow \mathscr C^\infty(\R^{n+1})$ as follows
\begin{equation}\label{Tstar}
T^\star(\vf)(x,t) = e^{\frac{t \tr B}2} U(t)\vf(x),
\end{equation}
where $\{U(t)\}_{t\in \R}$ is the group defined by \eqref{ExtendedSemigroup}.

\begin{proposition}[Unitarity]\label{P:Tstar}
We have
\[
T^\star: L^2(\Rn)\ \longrightarrow\ L^{\infty,2}(\R^{n+1}),
\]
and moreover for every $\vf\in L^2(\Rn)$ one has
\begin{equation}\label{Tstaruno}
||T^\star(\vf)||_{L^\infty_t L^2_x} = ||\vf||_{L^2_x}.
\end{equation}
\end{proposition}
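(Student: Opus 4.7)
The plan is to show that for each $t\in\R$ the operator $e^{\frac{t\tr B}{2}}U(t)$ acts as an isometry on $L^2(\Rn)$; once this pointwise-in-$t$ identity is established, taking the supremum over $t$ yields \eqref{Tstaruno} and hence the claimed boundedness into $L^\infty_t L^2_x$.

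My first step is to invoke Plancherel's identity: since $U(t)$ is explicit on the Fourier side via Corollary \ref{C:eccopop}, computing the $L^2_x$ norm reduces to computing $\|\widehat{U(t)\phi}\|_{L^2_\xi}$. Consider first $t>0$. From \eqref{FTsemigrou},
\[
|\widehat{U(t)\phi}(\xi)|^2 = e^{-2t\tr B}\,|\hat\phi(e^{-tB^\star}\xi)|^2,
\]
because the factor $e^{-4\pi^2 i\langle Q(t)e^{-tB^\star}\xi,e^{-tB^\star}\xi\rangle}$ is unimodular. Performing the linear change of variables $\eta = e^{-tB^\star}\xi$, whose Jacobian determinant is $e^{-t\tr B^\star}=e^{-t\tr B}$, gives $d\xi = e^{t\tr B}d\eta$, and therefore
\[
\|U(t)\phi\|_{L^2_x}^2 = \int_{\Rn} e^{-2t\tr B}|\hat\phi(e^{-tB^\star}\xi)|^2\,d\xi = e^{-t\tr B}\|\hat\phi\|_{L^2}^2 = e^{-t\tr B}\|\phi\|_{L^2_x}^2.
\]

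For $t<0$ the analogous computation from the first line of \eqref{FTsemigrou} yields $|\widehat{U(t)\phi}(\xi)|^2 = e^{2|t|\tr B}|\hat\phi(e^{|t|B^\star}\xi)|^2$, and the change of variables $\eta=e^{|t|B^\star}\xi$ (with Jacobian $e^{-|t|\tr B}$) produces $\|U(t)\phi\|_{L^2_x}^2 = e^{|t|\tr B}\|\phi\|_{L^2_x}^2 = e^{-t\tr B}\|\phi\|_{L^2_x}^2$, matching the formula from the $t>0$ case. For $t=0$, $U(0)=\mathrm{Id}$ trivially gives the identity. Thus, for every $t\in\R$,
\[
\|U(t)\phi\|_{L^2_x} = e^{-\frac{t\tr B}{2}}\|\phi\|_{L^2_x}.
\]

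Combining this with the definition \eqref{Tstar} of $T^\star$, the exponential prefactor $e^{\frac{t\tr B}{2}}$ exactly cancels the decay/growth factor $e^{-\frac{t\tr B}{2}}$, so
\[
\|T^\star(\phi)(\cdot,t)\|_{L^2_x} = e^{\frac{t\tr B}{2}}\|U(t)\phi\|_{L^2_x} = \|\phi\|_{L^2_x}
\]
for every $t$. Taking the supremum in $t$ establishes \eqref{Tstaruno} and the mapping property $T^\star:L^2(\Rn)\to L^\infty_t L^2_x$. No real obstacle is anticipated: the only subtlety is the sign of $t$, which forces splitting into the two cases of \eqref{FTsemigrou}, but both branches fortuitously produce the same exponential $e^{-t\tr B}$ in the $L^2$ norm — this is precisely the ``small miracle'' alluded to after Theorem \ref{T:de}, namely that the $\tr B$-weighted semigroup on $L^2$ is genuinely a unitary group.
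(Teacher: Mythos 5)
Your proof is correct and follows essentially the same route as the paper: Plancherel, the Fourier-side formula \eqref{FTsemigrou} for $U(t)$ with its unimodular quadratic phase, and the change of variables $\eta=e^{\mp tB^\star}\xi$ whose Jacobian cancels the exponential prefactors in both the $t>0$ and $t<0$ branches. The only cosmetic difference is that you isolate the intermediate identity $\|U(t)\phi\|_{L^2}=e^{-\frac{t\operatorname{tr}B}{2}}\|\phi\|_{L^2}$ before multiplying by the weight, whereas the paper carries the weight through the computation directly.
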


\begin{proof}
We have from \eqref{FTsemigrou} for every $t>0$
\begin{align*}
& ||T^\star(\vf)(\cdot,t)||_{L^2_x} = ||\mathscr F(T^\star(\vf))(\cdot,t)||_{L^2_x} = e^{\frac{t \tr B}2} ||\widehat{U(t)\vf}||_{L^2_x}
\\
& = e^{\frac{t \tr B}2} e^{- t \tr B} \left(\int_{\Rn} |\hat{\vf}(e^{-tB^{\star}}\xi)|^2 d\xi\right)^{1/2} 
\\
& = e^{\frac{t \tr B}2} e^{- t \tr B} e^{\frac{t \tr B}2} ||\vf||_{L^2_x} = ||\vf||_{L^2_x}.
\end{align*}
If instead $t<0$, again from \eqref{FTsemigrou} we have 
\begin{align*}
& ||T^\star(\vf)(\cdot,t)||_{L^2_x} = e^{-\frac{|t| \tr B}2} e^{|t| \tr B} \left(\int_{\Rn} |\hat{\phi}(e^{|t|B^{\star}}\xi)|^2 d\xi\right)^{1/2} = ||\vf||_{L^2_x}.
\end{align*}
These identities prove \eqref{Tstaruno}.

\end{proof}

We next compute the operator $T$, whose adjoint is $T^\star$. Denoting by $\sa \cdot,\cdot \da$ the inner product in $L^2(\Rn)$, and by $\sa\sa\cdot,\cdot\da\da$ that in $L^2(\R^{n+1})$, for every $\vf\in \So$ and $F\in \mathscr S(\R^{n+1})$, we  have from \eqref{Tstar} 
{\allowdisplaybreaks
\begin{align*}
& \sa T(F),\vf\da = \sa\sa F,T^\star(\vf)\da\da = \int_\R \int_{\Rn} F(x,t) \overline{T^\star(\vf)(x,t)}\ dx dt
\\
& =  \int_\R e^{\frac{t \tr B}2} \int_{\Rn} F(x,t)  \overline{U(t)\vf(x)}\  dx dt = \int_\R e^{\frac{t \tr B}2} \int_{\Rn} \hat F(\xi,t)  \overline{\widehat{U(t)\vf}(\xi)}\  d\xi dt 
\\
& = \int_{-\infty}^0 e^{\frac{t \tr B}2} \int_{\Rn} \hat F(\xi,t)  \overline{\widehat{U(t)\vf}(\xi)}\  d\xi dt  + \int_0^\infty e^{\frac{t \tr B}2} \int_{\Rn} \hat F(\xi,t)  \overline{\widehat{U(t)\vf}(\xi)}\  d\xi dt 
\\
&  = \int_{-\infty}^0 e^{-\frac{|t| \tr B}2} \int_{\Rn} \hat F(\xi,t) e^{|t| \tr B} e^{-4\pi^{2}i\langle Q(|t|)\xi,\xi\rangle} \overline{\hat{\phi}(e^{|t|B^{\star}}\xi)}\  d\xi dt 
\\
& + \int_0^\infty e^{\frac{t \tr B}2} \int_{\Rn} \hat F(\xi,t) e^{-t \tr B}e^{4\pi^{2}i\langle Q(t)e^{-tB^{\star}}\xi, e^{-tB^{\star}}\xi\rangle} \overline{\hat{\phi}(e^{-tB^{\star}}\xi)}\  d\xi dt 
\\
&  = \int_{-\infty}^0 e^{\frac{|t| \tr B}2} \int_{\Rn} e^{-|t| \tr B} \hat F(e^{-|t|B^{\star}}\eta,t) e^{-4\pi^{2}i \langle Q(|t|) e^{-|t|B^\star} \eta,e^{-|t|B^\star \eta}\rangle} \overline{\hat{\phi}(\eta)}\  d\eta dt 
\\
& + \int_0^\infty e^{-\frac{t \tr B}2} \int_{\Rn} e^{t \tr B}\hat F(e^{tB^\star}\eta,t) e^{4\pi^{2}i\langle Q(t)\eta,\eta\rangle} \overline{\hat{\phi}(\eta)}\  d\eta dt 
\\
& = \int_{-\infty}^0 e^{\frac{|t| \tr B}2} \int_{\Rn}  \mathscr F\left(U(|t|)(F(\cdot,t)\right)(\eta)\ \overline{\hat{\phi}(\eta)}\  d\eta dt 
\\
& + \int_0^\infty e^{-\frac{t \tr B}2} \int_{\Rn} \mathscr F\left(U(|t|)(F(\cdot,t)\right)(\eta)\ \overline{\hat{\phi}(\eta)}\  d\eta dt  
\\
& = \int_{-\infty}^0 e^{\frac{|t| \tr B}2} \int_{\Rn} U(-t)\left(F(\cdot,t)\right)(y)\ \overline{\phi(y)}\  dy dt 
\\
& +  \int_0^\infty e^{-\frac{t \tr B}2} \int_{\Rn} U(-t)\left(F(\cdot,t)\right)(y)\ \overline{\phi(y)}\  dy dt 
\end{align*}}
This chain of equalities, and the definition \eqref{Tstar}, show that
\begin{equation}\label{T}
T(F)(y) = \int_\R e^{-\frac{t \tr B}2} U(-t)(F(\cdot,t))(y) dt = \int_\R T^\star(F(\cdot,t))(y,-t) dt.
\end{equation}
It is immediate from \eqref{T} and \eqref{Tstaruno} that
\[
||T(F)||_{L^2_y} \le \int_\R ||T^\star(F(\cdot,t))(\cdot,-t)||_{L^2_y} dt = \int_\R ||F(\cdot,-t)||_{L^2_y} dt = ||F||_{L^1_t L^2_y}.
\]
We summarize these results in the following.

\begin{proposition}\label{P:T}
The operator defined by \eqref{T} can be uniquely extended to a bounded linear operator 
\[
T: L^{1,2}(\R^{n+1})\ \longrightarrow\ L^2(\Rn),
\]
satisfying the estimate
\begin{equation}\label{Tuno}
||T(F)||_{L^2_y} \le ||F||_{L^1_t L^2_y},
\end{equation}
for every $F\in L^{1,2}(\R^{n+1})$.
\end{proposition}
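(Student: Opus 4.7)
The plan is to prove the bound \eqref{Tuno} first on the dense subspace $\mathscr S(\R^{n+1})$ and then extend by density. The essential ingredients have already been prepared: the formula \eqref{T} exhibiting $T$ as a time integral of translates of $T^\star$, and the unitarity identity \eqref{Tstaruno} from Proposition \ref{P:Tstar}.

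Concretely, for $F \in \mathscr S(\R^{n+1})$ the integral defining $T(F)$ in \eqref{T} converges absolutely (because $F(\cdot, t) \in \mathscr S(\R^n) \subset L^2(\R^n)$ for each $t$ and has rapid decay in $t$), so $T(F)$ is a well-defined element of $L^2(\R^n)$. I would then apply Minkowski's integral inequality to interchange the $L^2_y$ norm with the time integral:
\begin{equation*}
\|T(F)\|_{L^2_y} = \Bigl\| \int_\R T^\star(F(\cdot,t))(\cdot,-t) \, dt \Bigr\|_{L^2_y} \le \int_\R \|T^\star(F(\cdot,t))(\cdot,-t)\|_{L^2_y}\, dt.
\end{equation*}
By Proposition \ref{P:Tstar}, for each fixed $t \in \R$ one has $\|T^\star(F(\cdot,t))(\cdot,-t)\|_{L^2_y} = \|F(\cdot,t)\|_{L^2_y}$ (the unitarity \eqref{Tstaruno} evaluated at time $-t$). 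Substituting, this gives exactly
\begin{equation*}
\|T(F)\|_{L^2_y} \le \int_\R \|F(\cdot,t)\|_{L^2_y}\, dt = \|F\|_{L^1_t L^2_y},
\end{equation*}
which is the desired inequality \eqref{Tuno}.

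To finish, since $\mathscr S(\R^{n+1})$ is dense in $L^{1,2}(\R^{n+1}) = L^1_t L^2_y$, the estimate just obtained shows that $T$, defined initially on $\mathscr S$, is Lipschitz continuous into $L^2(\R^n)$ with constant $1$; it therefore admits a unique bounded linear extension to all of $L^{1,2}(\R^{n+1})$, and the extension still satisfies \eqref{Tuno}. There is no real obstacle here — the only point that deserves a moment of care is the justification of Minkowski's integral inequality, which is routine given the decay and measurability of $F\in \mathscr S(\R^{n+1})$; indeed the inner norm $t \mapsto \|F(\cdot,t)\|_{L^2_y}$ is integrable, so Fubini applies and the exchange is legitimate. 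The identity \eqref{T} (which displays $T$ as the formal adjoint of $T^\star$) together with unitarity does all the real work.
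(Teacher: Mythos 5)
Your argument is correct and is essentially the paper's own proof: the paper obtains \eqref{Tuno} in one line by applying Minkowski's integral inequality to \eqref{T} and then the pointwise-in-time unitarity established in the proof of Proposition \ref{P:Tstar}, exactly as you do. The only difference is cosmetic — you spell out the density/extension step and the justification of the norm–integral interchange, which the paper leaves implicit.
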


Our next step is to prove the following critical result.

\begin{theorem}[First Strichartz estimate]\label{T:SEA} 
Assume \eqref{H0} in \emph{Hypothesis (A)}. Suppose that $r>2$, and that $r < \frac{2D}{D-2}$ when $D> 2$. If $(q,r)$ is admissible for \eqref{A}, then there exists $C(n,r,q)>0$ such that for every $F\in \mathscr S(\R^{n+1})$, one has  
\begin{equation}\label{gettingthereA}
||T^\star T(F)||_{L^q_tL^r_x} \le C(n,r,q)\ ||F||_{L^{q'}_tL^{r'}_x}.
\end{equation}
\end{theorem}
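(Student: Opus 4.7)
The plan is to run the classical $T^\star T$ argument of Ginibre--Velo, where the role of Hypothesis (A) is to engineer an exact cancellation between the exponential weights built into $T^\star$ and those appearing in the dispersive estimate of Theorem \ref{T:de}. The three ingredients are: (i) an explicit formula for $T^\star T$ via the group property of $\{U(t)\}_{t\in\R}$; (ii) a pointwise-in-time dispersive bound collapsing to a pure power $|t-s|^{-2/q}$; and (iii) Minkowski's integral inequality followed by the Hardy--Littlewood--Sobolev inequality on $\R$.

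First, using that $\{U(t)\}_{t\in\R}$ is a group (Theorem \ref{ExtendingProp}), so that $U(t)U(-s) = U(t-s)$, I would combine \eqref{Tstar} with \eqref{T} to obtain
\begin{equation*}
T^\star T(F)(x,t) \;=\; \int_\R e^{\frac{(t-s)\tr B}{2}}\, U(t-s)\bigl(F(\cdot,s)\bigr)(x)\,ds.
\end{equation*}
The key pointwise step is then to bound the integrand in $L^r_x$. Writing $\tau = t-s$ and applying Theorem \ref{T:de} separately for $\tau>0$ and $\tau<0$, the weight $e^{\tau \tr B/2}$ combines with the exponential factor of Theorem \ref{T:de} as follows. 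For $\tau>0$ one has $e^{\tau\tr B/2}\cdot e^{-\tau\tr B/r} = e^{\tau\tr B(1/2-1/r)}$; for $\tau<0$ one has $e^{\tau\tr B/2}\cdot e^{|\tau|\tr B/r'} = e^{|\tau|\tr B(-1/2+1/r')} = e^{|\tau|\tr B(1/2-1/r)}$, using $1/r+1/r'=1$. In both cases the surviving weight is $e^{|\tau|\tr B(1/2-1/r)}$, and invoking Hypothesis (A) in the form \eqref{H0},
\begin{equation*}
\frac{e^{|\tau|\tr B(1/2-1/r)}}{V(|\tau|)^{\frac12 - \frac1r}} \;\le\; \frac{e^{|\tau|\tr B(1/2-1/r)}}{\gamma^{\frac12 - \frac1r}\,|\tau|^{D(\frac12-\frac1r)}\, e^{|\tau|\tr B(\frac12-\frac1r)}} \;=\; \frac{C}{|\tau|^{D(\frac12 - \frac1r)}} \;=\; \frac{C}{|\tau|^{2/q}},
\end{equation*}
where the last identity is the admissibility condition \eqref{admissible}. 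This is precisely the ``small miracle'' alluded to after Theorem \ref{T:de}: the asymmetric exponentials for $t<0$ and $t>0$ are brought into agreement and then annihilated by the lower bound on $V$.

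Finally, Minkowski's integral inequality yields
\begin{equation*}
\|T^\star T(F)(\cdot,t)\|_{L^r_x} \;\le\; C\int_\R \frac{\|F(\cdot,s)\|_{L^{r'}_x}}{|t-s|^{2/q}}\,ds,
\end{equation*}
and it remains to take $L^q_t$ norms. Our assumptions $r>2$ and $r<\frac{2D}{D-2}$ (when $D>2$) translate, via \eqref{admissible}, into $0<2/q<1$ and $q>2$, which is exactly the range in which the one-dimensional Hardy--Littlewood--Sobolev inequality makes convolution with $|t|^{-2/q}$ bounded from $L^{q'}(\R)$ to $L^q(\R)$ (the scaling check $\tfrac1{q'}-(1-\tfrac2q)=\tfrac1q$ is automatic). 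Applying HLS gives \eqref{gettingthereA}.

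The only delicate point is the pointwise cancellation in the second step; once Hypothesis (A) is used in the sharp form \eqref{H0} and the algebraic identity $-\tfrac12+\tfrac1{r'}=\tfrac12-\tfrac1r$ is exploited, everything else is the standard $T^\star T$ machinery. The restrictions on $r$ in the statement of the theorem are dictated purely by the HLS endpoint conditions $0<2/q<1$, so no further case analysis is needed.
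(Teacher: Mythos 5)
Your proposal is correct and follows essentially the same route as the paper: the same $T^\star T$ formula via the group property, the same splitting into $t-s\gtrless 0$ with the two dispersive bounds of Theorem \ref{T:de}, the same cancellation of the exponential weights via \eqref{H0}, and the same reduction to one-dimensional fractional integration with Hardy--Littlewood--Sobolev at the admissible exponents. The only cosmetic difference is that the paper names the HLS parameter $\beta = 1 - D(\tfrac12-\tfrac1r)$ while you write the kernel exponent directly as $2/q$.
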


\begin{proof}

From \eqref{Tstar} and \eqref{T} we have for $F\in \mathscr S(\R^{n+1})$,
\begin{align*}
& T^\star(T(F))(x,t) = e^{\frac{t \tr B}2} U(t)(T(F))(x) = e^{\frac{t \tr B}2} U(t) \int_\R e^{-\frac{s \tr B}2} U(-s)(F(\cdot,s))(x) ds
\\
& =  \int_\R e^{\frac{(t-s) \tr B}2} U(t-s)(F(\cdot,s))(x) ds
\\
& = \int_{-\infty}^t e^{\frac{(t-s) \tr B}2} U(t-s)(F(\cdot,s))(x) ds + \int_t^\infty e^{\frac{(t-s) \tr B}2} U(t-s)(F(\cdot,s))(x) ds
\\
& = I(x,t) + II(x,t).
\end{align*}
Since in the integral in $I(x,t)$ we have $t-s\ge 0$, from the first part of Theorem \ref{T:de} we find for any $r\ge 2$
\begin{align*}
& ||I(\cdot,t)||_{L^r_x} \le \int_{-\infty}^t e^{\frac{(t-s) \tr B}2} ||U(t-s)(F(\cdot,s))||_{L^r_x} ds
\\
& \le C(n,r)\int_{-\infty}^t \frac{e^{(t-s) \tr B\left(\frac 12-\frac 1r\right)}}{V(t-s)^{\frac 12 - \frac 1{r}}}\ ||F(\cdot,s)||_{L^{r'}_x} ds. 
\end{align*}
Since in the integral in $II(x,t)$ we have $t-s\le 0$, from the second part of Theorem \ref{T:de} we have
\begin{align*}
& ||II(\cdot,t)||_{L^r_x} \le \int_t^\infty e^{\frac{(t-s) \tr B}2} ||U(t-s)(F(\cdot,s))||_{L^r_x} ds
\\
& \le C(n,r) \int_t^\infty \frac{e^{(s-t) \tr B\left(\frac 12-\frac 1r\right)}}{V(s-t)^{\frac 12 - \frac 1{r}}}\ ||F(\cdot,s)||_{L^{r'}_x} ds. 
\end{align*}
Combining the latter three equations, we conclude
\begin{align}\label{nice}
&  ||T^\star(T(F))(\cdot,t)||_{L^r_x} \le C(n,r) \int_\R \left\{\frac{e^{|t-s| \tr B}}{V(|t-s|)}\right\}^{\left(\frac 12-\frac 1r\right)}\ ||F(\cdot,s)||_{L^{r'}_x} ds. 
\end{align}

In view of the assumption \eqref{H0}, we obtain from \eqref{nice} the following conclusion
\begin{equation}\label{supernice}
||T^\star(T(F))(\cdot,t)||_{L^r_x} \le C(n,\gamma,r) \int_\R \frac{||F(\cdot,s)||_{L^{r'}_x}}{|t-s|^{D\left(\frac 12-\frac 1r\right)}}\  ds. 
\end{equation}
If we now introduce the parameter $\beta$ by the equation
\begin{equation}\label{etabetaD}
1-\beta = D(\frac 12-\frac{1}{r}),
\end{equation}
and consider the Riesz operator of fractional integration on the line
\[
\mathcal I_\beta(h)(t) = \int_{\R} \frac{h(s)}{|t-s|^{1-\beta}} ds,
\]
then it is clear from \eqref{supernice} that 
\begin{equation}\label{good}
||T^\star(T(F))(\cdot,t)||_{L^r_x} \le C(n,\gamma,r)\ \mathcal I_\beta(||F(\cdot,\cdot)||_{L^{r'}_x})(t).
\end{equation}
Note that the bound $\beta\le 1$ is automatically true since $r\ge 2$. Furthermore, $\beta = 1$ if and only of $r=2$, so when $r>2$ we have $\beta < 1$ for any $D\ge 2$. The inequality $\beta >0$ instead, forces the condition 
\begin{equation}\label{derange}
\frac 12-\frac{1}{D}  < \frac 1r.
\end{equation}
When $D=2$ this is automatically true for any $2\le r < \infty$, whereas when $D>2$ it imposes that $2\le r < \frac{2D}{D-2}$. Since Definition \ref{D:admissible} includes the restrictions $2<r<\infty$, and $r< \frac{2D}{D-2}$ when $D>2$, we are guaranteed that the number $\beta$ in \eqref{etabetaD} satisfy $0<\beta<1$. If we thus take $1<p<\frac{1}\beta$, and $\frac 1p - \frac 1q = \beta$, then the Hardy-Littlewood-Sobolev theorem implies that
\begin{equation}\label{HLSA}
\mathcal I_\beta : L^p(\R)\to L^q(\R)\ \Longleftrightarrow\ \frac 1p - \frac 1q = \beta = 1 - D(\frac 12-\frac{1}{r}),
\end{equation}
and moreover
\begin{equation}\label{marcellinoA}
||\mathcal I_\beta(h)||_{L^q(\R)} \le C(p,\beta) ||h||_{L^p(\R)},
\end{equation}
see \cite[Theor. 1, p. 119]{St}. 
By the hypothesis that $(q,r)$ be admissible for \eqref{A}, we are allowed to take $p = q'<q$ in the equation $\frac 1p - \frac 1q = \beta$. This choice gives in fact
\[
\frac 2q = 1 - \beta = D(\frac 12-\frac{1}{r}),
\]
which is precisely \eqref{admissible}. From \eqref{good} and \eqref{marcellinoA}, we conclude the validity of \eqref{gettingthereA}.

\end{proof}

Next, we use Theorem \ref{T:SEA} to establish the following result.
 
\begin{theorem}[Second Strichartz estimate]\label{T:SE2A}
Under the hypothesis of Theorem \ref{T:SEA}, for every $\vf\in L^2(\Rn)$ one has 
\begin{align}\label{SE2A}
\|T^{\star}(\vf)\|_{L^q_tL^r_x}\le C(n,r,q) \|\vf\|_{L^2_x}.
\end{align}
\end{theorem}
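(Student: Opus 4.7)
The plan is to deduce \eqref{SE2A} from Theorem \ref{T:SEA} and Proposition \ref{P:Tstar} by the classical $T^\star T$ argument, exactly as in Ginibre--Velo \cite{GV, GVstrich}. The key point is that, since the admissibility condition \eqref{admissible} and Theorem \ref{T:SEA} pair the exponents symmetrically, bounding $T^\star T$ from $L^{q',r'}$ into $L^{q,r}$ is equivalent to bounding $T^\star$ from $L^2_x$ into $L^{q,r}$, and dually $T$ from $L^{q',r'}$ into $L^2_x$.

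First, working with $\vf\in\So$ and $G\in\mathscr S(\R^{n+1})$ (so that all formal manipulations below are justified), I would invoke the duality between $L^{q,r}$ and $L^{q',r'}$ to write
\begin{equation*}
\|T^\star \vf\|_{L^q_tL^r_x}=\sup_{\|G\|_{L^{q'}_tL^{r'}_x}\le 1}\bigl|\langle\!\langle T^\star\vf,G\rangle\!\rangle\bigr|
=\sup_{\|G\|_{L^{q'}_tL^{r'}_x}\le 1}\bigl|\langle \vf,TG\rangle_{L^2_x}\bigr|
\le \|\vf\|_{L^2_x}\,\sup_{\|G\|_{L^{q'}_tL^{r'}_x}\le 1}\|TG\|_{L^2_x}.
\end{equation*}
Therefore it suffices to establish the dual bound
\begin{equation}\label{Tdual}
\|TG\|_{L^2_x}\le C(n,r,q)\,\|G\|_{L^{q'}_tL^{r'}_x},\qquad G\in\mathscr S(\R^{n+1}).
\end{equation}

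To obtain \eqref{Tdual}, I would square the $L^2_x$ norm and use the $T^\star T$ identity together with Theorem \ref{T:SEA}:
\begin{equation*}
\|TG\|_{L^2_x}^2=\langle TG,TG\rangle_{L^2_x}=\langle\!\langle T^\star T G,G\rangle\!\rangle
\le \|T^\star T G\|_{L^q_tL^r_x}\,\|G\|_{L^{q'}_tL^{r'}_x}
\le C(n,r,q)\,\|G\|_{L^{q'}_tL^{r'}_x}^2,
\end{equation*}
where the first inequality is H\"older in the mixed-norm duality, and the second is precisely \eqref{gettingthereA}. Taking square roots yields \eqref{Tdual} with constant $C(n,r,q)^{1/2}$, and inserting this into the duality estimate above gives \eqref{SE2A} for $\vf\in\So$.

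Finally, to complete the proof I would extend \eqref{SE2A} from $\So$ to all of $L^2(\Rn)$ by density, using the fact that $T^\star$ is a bounded operator from $L^2(\Rn)$ to $L^\infty_t L^2_x$ (Proposition \ref{P:Tstar}), so $T^\star \vf$ is well-defined as an element of $L^q_tL^r_x$ via the standard limiting procedure. There is no substantive obstacle here; the only minor care needed is verifying that the adjoint relation $\langle\!\langle T^\star\vf,G\rangle\!\rangle=\langle \vf,TG\rangle_{L^2_x}$ (which was the very defining property used in the derivation of \eqref{T}) extends to Schwartz pairs, which is immediate from the explicit formulas for $T^\star$ and $T$ and Fubini's theorem.
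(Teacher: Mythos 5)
Your proposal is correct and follows essentially the same route as the paper: the paper likewise first derives $\|T(F)\|_{L^2_x}^2=|\langle T^\star T(F),F\rangle|\le \|T^\star T(F)\|_{L^q_tL^r_x}\|F\|_{L^{q'}_tL^{r'}_x}\le C\|F\|_{L^{q'}_tL^{r'}_x}^2$ from Theorem \ref{T:SEA}, and then concludes \eqref{SE2A} by the duality $|\langle T^\star(\vf),F\rangle|=|\langle\vf,T(F)\rangle|$ and a supremum over $F$ in a dense class. The only cosmetic difference is the order of the two steps and your explicit mention of the density extension, which the paper leaves implicit.
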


\begin{proof}
First, one uses Theorem \ref{T:SEA} to obtain the following basic estimate for the operator $T$. Observe that  for every $F\in \mathscr S(\R^{n+1})$ we have 
\begin{align}\label{TF}
& \|T(F)\|_{L^2_x}^{2}=|\langle T(F),T(F) \rangle|=|\langle T^{\star}T(F),F\rangle |
\\
& \le \|T^{\star}T(F)\|_{L^q_t L^r_x}\|F\|_{L^{q'}_t L^{r'}_x}\le C(n,r,q)\ \|F\|^{2}_{L^{q'}_t L^{r'}_x}.	
\notag
\end{align}
But then we have for $\vf\in L^2(\Rn)$
\begin{align*}
|\langle T^{\star}(\vf),F\rangle |=|\langle \vf, T(F) \rangle |\le \|\vf\|_{L^2_x} \|T(F)\|_{L^2_x}\le C(n,r,q) \|\vf\|_{L^2_x} \|F\|_{L^{q'}_t L^{r'}_x}.
\end{align*}
By taking the supremum of all $F\in L^{q'}_{t}L^{r'}_{x}\cap L^{1}_{t}L^{2}_{x}$, we conclude that \eqref{SE2A} holds.

\end{proof}

With these results in hand, we can finally give the 

\begin{proof}[Proof of Theorem A]
Recalling \eqref{Tstar}, we now reformulate \eqref{SE2A} in the following way:
\begin{equation}\label{wow}
\left(\int_\R e^{\frac{q\tr B}2 t}  \left(\int_{\Rn}|U(t)\vf(x)|^r dx\right)^{\frac qr} dt\right)^{\frac 1q} \le C(n,r,q) \|\vf\|_{L^2_x}.
\end{equation}
This estimate establishes the homogeneous part of \eqref{strichartzone}. We now turn to analysing the non-homogeneous term in \eqref{duhamel}.
From the first estimate in Theorem \ref{T:de} we have
\begin{align*}
& ||\int_0^t U(t-s)(F(\cdot,s))(\cdot) ds||_{L^r_x}\le \int_0^t ||U(t-s)(F(\cdot,s))(\cdot)||_{L^r_x} ds
\\
& \le C(n,r) \int_0^t \frac{e^{-\frac{\tr B}{r}(t-s)}}{V(t-s)^{\frac 12 - \frac 1{r}}}\ ||F(\cdot,s)||_{L^{r'}_x} ds
\\
& \le C(n,r,\gamma) \int_0^t \frac{e^{-\frac{\tr B}{r}(t-s)} e^{-\left(\frac 12 - \frac 1{r}\right) \tr B (t-s)}}{(t-s)^{D\left(\frac 12 - \frac 1{r}\right)}}\ ||F(\cdot,s)||_{L^{r'}_x} ds,
\end{align*}
where in the last inequality we have used the assumption \eqref{H0}. This estimate implies 
\begin{align*}
& e^{\frac{\tr B}{2} t}\ ||\int_0^t U(t-s)(F(\cdot,s))(\cdot) ds||_{L^r_x} \le C(n,r,\gamma) \int_0^t \frac{e^{\frac{\tr B}{2} s} ||F(\cdot,s)||_{L^{r'}_x}}{(t-s)^{D\left(\frac 12 - \frac 1{r}\right)}} ds
\\
& = C(n,r,\gamma) \int_\R \frac{h(s)}{(t-s)^{D\left(\frac 12 - \frac 1{r}\right)}} ds = C(n,r,\gamma)\ \mathcal I_\beta(h)(t),
\end{align*}
where, with $\beta$ as in \eqref{etabetaD}, we have let
\[
h(s) = e^{\frac{\tr B}{2} s} ||F(\cdot,s)||_{L^{r'}_x}.
\]
Using again \eqref{marcellinoA}, we infer
\begin{align}\label{nonhomU}
& \left(\int_\R e^{t \frac{q\tr B}2} ||\int_0^t U(t-s)(F(\cdot,s))(\cdot) ds||^q_{L^r_x} dt\right)^{\frac 1q}
 \le C^\star(n,r,\gamma,D)\ ||e^{\frac{\tr B}{2} t} ||F(\cdot,t)||_{L^{r'}_x}||_{L^{q'}_t}.
\end{align}
Combining \eqref{wow} and \eqref{nonhomU}, and keeping the definition of \eqref{Tstar} in mind, we have finally proved \eqref{strichartzone}, thus completing the proof of Theorem \ref{T:strichartzone}.

\end{proof}

\subsection{Ginibre-Velo implies restriction}\label{S:res} We close this section with a remark concerning the connection between the  estimate \eqref{TF} and the restriction theorem for the Fourier transform. Consider the classical Schr\"odinger equation, for which $Q = I_n$ and $B = O_n$. In such case, if we use Plancherel in the inequality \eqref{TF}, we obtain 
\begin{align}\label{TFplan}
& \|\widehat{T(F)}\|_{L^2(\Rn)} \le C(n,r)\ \|F\|_{L^{q'}_t L^{r'}_x}.	
\end{align}
Keeping in mind that we presently have $D=n$,  if we apply \eqref{TFplan} with $q' = r'$ as in \eqref{stripairn}, we obtain
\begin{equation}\label{res}
\left(\int_{\Rn} |\widehat{T(F)}(\xi)|^2 d\xi\right)^{1/2} \le C(n)\ \|F\|_{L^{\frac{2(n+2)}{n+4}}(\R^{n+1})}.
\end{equation}
The inequality \eqref{res} easily implies the Tomas-Stein restriction inequality for the truncated paraboloid. To see this, it suffices to observe that, when $Q = I_n$ and $B = O_n$, the operator \eqref{T} becomes
\[
T(F)(y) = \int_\R U(-t)(F(\cdot,t))(y) dt,
\]
where $U(t) = e^{it\Delta}$. Therefore, 
if we take the Fourier transform of \eqref{T}, we obtain
\begin{equation}\label{res1}
\widehat{T(F)}(\xi) = \int_\R e^{4\pi^2 i t |\xi|^2} \hat F(\xi,t) dt =  \int_{\R^{n+1}} e^{-2\pi i\sa(\xi,-2\pi |\xi|^2),(x,t)\da} F(x,t) dx dt = \hat F(\xi,-2\pi|\xi|^2).
\end{equation}
If for any $M>0$ we consider the quadratic hypersurface
\[
S_M = \{(\xi,\tau)\in\R^{n+1}\mid \tau = - 2\pi |\xi|^2,\ |\xi|\le M\},
\]
then the Riemannian measure on $S$ is given by
\[
d\sigma = \sqrt{1+16\pi^2 |\xi|^2}.
\]
Combining \eqref{res} with \eqref{res1}, we conclude that 
there exists $C(n,M)>0$ such that for any $F\in \mathscr S(\R^{n+1})$
\begin{equation}\label{TS}
\left(\int_{S_M} |\hat F|^2 d\sigma\right)^{1/2} \le C(n,M)\ \|F\|_{L^{\frac{2(n+2)}{n+4}}(\R^{n+1})}.
\end{equation}
Thus, the approach of Ginibre and Velo provides an elementary proof of the restriction theorem \eqref{TS} for the Fourier transform.


\section{Proof of Theorem B}\label{S:TB}

The aim of this section is to establish Strichartz estimates for \eqref{A} in situations, such as Example \ref{E:imspec}, when the hypothesis \eqref{H0} fails, and we must use \eqref{HB} instead. Our objective is proving Theorem B. 
Recall that a measurable function $g$ belongs to the weak $L^{r}$ space $L^{r,w}$ if
\begin{align*}
\|g\|_{L^{r,w}}^{r} =\sup_{\la>0}\la^{r}\mu (\{t\mid |g(t)|>\la\})<\infty,
\end{align*}
where we have denoted by $\mu$ the measure on the relevant space.
Let $1<p,q,r<\infty$ satisfy $\frac{1}{p}+\frac{1}{r}=\frac{1}{q}+1$. Then there exists a constant $C$ such that for any $f\in L^{p}$ and $g\in L^{r,w}$ one has
\begin{align}\label{weakYoung}
\|f\star g\|_{q}\le C \|f\|_{p}\|g\|_{r,w}.
\end{align}
For the proof of \eqref{weakYoung}, see \cite{O}. The next lemma is well-known, but for completeness we recall its simple proof. 

\begin{lemma}\label{WighFraInt}
Let $0<\gamma_{1}, \gamma_2 <1$, $C_1,C_2>0$. Let $k:\mathbb{R}\to \mathbb{R}$ be such that
\begin{align*}
|k(t)|\le \begin{cases}
\frac{C_1}{|t|^{\gamma_{1}}} \ \ \ \ \text{if }|t|\le 1,\\
\frac{C_2}{|t|^{\gamma_{2}}} \ \ \ \ \text{if }|t|\ge1.
\end{cases}
\end{align*}
If $1<p_{1}<q_{1}<\infty$ , $1< p_{2}, q_{2} <\infty$
\begin{align*}
\gamma_{1}=1+\frac{1}{q_{1}}-\frac{1}{p_{1}}, \ \ \ \text{and}\ \ \ \ \gamma_{2}\ge1+\frac{1}{q_{2}}-\frac{1}{p_{2}},
\end{align*}  
then one has
\begin{align}\label{meglio}
\|f\star k\|_{L^{q_{1}}+L^{q_{2}}}\le C \|f\|_{L^{p_{1}}\cap L^{p_{2}}}.
\end{align}
\end{lemma}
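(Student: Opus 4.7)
The plan is to split the kernel into its near and far pieces, namely to write $k = k_1 + k_2$ with $k_1 = k\,\mathbf{1}_{\{|t|\le 1\}}$ and $k_2 = k\,\mathbf{1}_{\{|t|>1\}}$, and then bound each convolution $f\star k_i$ in $L^{q_i}$ separately via the weak Young inequality \eqref{weakYoung}. The conclusion follows from the very definition of the sum-space norm:
\[
\|f\star k\|_{L^{q_1}+L^{q_2}} \le \|f\star k_1\|_{L^{q_1}} + \|f\star k_2\|_{L^{q_2}}.
\]

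For the near piece, the pointwise bound $|k_1(t)|\le C_1 |t|^{-\gamma_1}\mathbf{1}_{\{|t|\le 1\}}$ together with a direct layer-cake computation shows that $k_1\in L^{r_1,w}(\R)$ with $r_1=1/\gamma_1$; since $0<\gamma_1<1$, one has $r_1\in(1,\infty)$. The hypothesis $\gamma_1 = 1 + 1/q_1 - 1/p_1$ rewrites exactly as the Young index relation $1/p_1 + 1/r_1 = 1/q_1 + 1$, and then \eqref{weakYoung} gives $\|f\star k_1\|_{L^{q_1}}\le C\,\|f\|_{L^{p_1}}\|k_1\|_{L^{r_1,w}}\le C\,\|f\|_{L^{p_1}}$. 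The admissibility $p_1<q_1$ is already assumed.

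For the far piece, I would first observe that the assumptions $0<\gamma_2<1$ and $\gamma_2\ge 1+1/q_2-1/p_2$ force $1/p_2 - 1/q_2 > 0$, so in particular $p_2<q_2$. Setting
\[
r_2 \overset{def}{=} \bigl(1 + 1/q_2 - 1/p_2\bigr)^{-1} \in (1,\infty),
\]
one has by construction $1/p_2+1/r_2=1/q_2+1$, and the inequality $\gamma_2 r_2 \ge 1$ yields that $|t|^{-\gamma_2}\mathbf{1}_{\{|t|>1\}}$ belongs to $L^{r_2,w}(\R)$ (in fact to $L^{r_2}(\R)$ whenever $\gamma_2 > 1+1/q_2-1/p_2$). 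A second application of \eqref{weakYoung} then produces $\|f\star k_2\|_{L^{q_2}}\le C\,\|f\|_{L^{p_2}}\|k_2\|_{L^{r_2,w}}\le C\,\|f\|_{L^{p_2}}$.

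Combining the two partial estimates yields
\[
\|f\star k\|_{L^{q_1}+L^{q_2}} \le C\bigl(\|f\|_{L^{p_1}}+\|f\|_{L^{p_2}}\bigr) = C\,\|f\|_{L^{p_1}\cap L^{p_2}},
\]
which is \eqref{meglio}. There is no genuine obstacle in this argument; the only subtlety is checking that the index conditions on $(p_i,q_i,\gamma_i)$ translate precisely to the admissibility hypotheses of the weak Young inequality on each regime, and in particular that the pair of inequalities $0<\gamma_2<1$ and $\gamma_2\ge 1+1/q_2-1/p_2$ automatically enforces $p_2<q_2$ and $r_2\in(1,\infty)$.
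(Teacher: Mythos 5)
Your proof is correct and follows essentially the same route as the paper: the same splitting $k=k_1+k_2$ at $|t|=1$, membership of each piece in the appropriate weak Lebesgue space verified by a layer-cake computation, and two applications of the weak Young inequality \eqref{weakYoung} (the paper merely notes that the near piece can equivalently be handled by Hardy--Littlewood--Sobolev). Your explicit verification that the index hypotheses force $p_2<q_2$ and $r_2\in(1,\infty)$ is a welcome tidying of a point the paper leaves implicit.
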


\begin{proof}
Let $f\in L^{p_{1}}\cap L^{p_{2}}$, and set $I=(-1,1)$. If we let $k_1 = \mathbf 1_I\ k$ and $k_2 = \mathbf 1_{\R \setminus I}\ k$, we can write 
\begin{align*}
f\star k= f\star k_1 + f\star k_2.
\end{align*}
Since $\frac{1}{p_{1}}-\frac{1}{q_{1}}=1-\gamma_{1}$, the Hardy-Littlewood-Sobolev inequality (or the fact that $k_1\in L^{1/\gamma_1,w}(\R)$ and \eqref{weakYoung}) implies
\begin{align*}
\|k_{1}\star f\|_{L^{q_{1}}}\lesssim \|f\|_{L^{p_{1}}}.
\end{align*}
For $\la>0$ we now have
\[
\{t\in \R\mid |k_2(t)|>\la\} \subset \{|t|\ge 1\mid \frac{C_2}{|t|^{\gamma_2}} > \la\} = \{|t|\ge 1\mid |t|< (C_2\la^{-1})^{1/\gamma_2}\},
\]
provided that $0<\la \le C_2$. If instead $\la >C_2$, then $|\{t\in \R\mid |k_2(t)|>\la\}| = 0$. For any $r>1$ we thus have
\begin{align}\label{distr}
& \sup_{0<\la}\la^{r} |\{t\in \R\mid |k_2(t)|>\la\}| = \sup_{0<\la<C_2}\la^{r} |\{t\in \R\mid |k_2(t)|>\la\}|
\\
& \le \sup_{0<\la<C_2} \la^{r}(C_2\la^{-1})^{1/\gamma_2} \le \overline C,
\notag
\end{align}
provided that $r\gamma_{2}\ge1$, or equivalently $1 - \gamma_2 \le 1 - \frac 1r$. If we take 
\[
\frac{1}{p_{2}}-\frac{1}{q_{2}}=1-\frac{1}{r} \ge 1 - \gamma_2,
\] 
we conclude from \eqref{weakYoung} and \eqref{distr} that 
\[
||k_2 \star f||_{L^2} \le ||k_2||_{L^{r,\infty}} ||f||_{L^{p_2}} \le \overline C ||f||_{L^{p_2}}.
\]
This proves \eqref{meglio}. 

\end{proof}

The next two results represent the counterpart of Theorems \ref{T:SEA} and \ref{T:SE2A} in situations in which \eqref{HB} in the Hypothesis (B) hold. We only provide details of the former. 

\begin{theorem}\label{T:SE2K} 
Assume \eqref{HB}, and that $r>2$ and $r<\frac{2D}{D-2}$ when $D>2$.  
Suppose further that $(q,r)$ satisfy \eqref{admissible}, and that $1<q_{\infty} <\infty$ is such that
\begin{align}\label{inftyadm2}
D_{\infty}(\frac{1}{2}-\frac{1}{r})\ge \frac{2}{q_{\infty}}.
\end{align} 
Then there exists $C = C(n,r,\gamma, D, D_\infty)>0$ 
such that for every $F\in \mathscr S(\R^{n+1})$, one has  
\begin{equation}\label{FStriDrift}
\|T^{\star}T(F)\|_{L^{q}_{t}+L^{q_{\infty}}_{t} L^{r}_{x}}\le C \|F\|_{L^{q'}_{t}\cap L^{q'_{\infty}}_{t} L^{r'}_{x}}.
	\end{equation}
\end{theorem}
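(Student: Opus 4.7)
The plan is to mimic the proof of Theorem \ref{T:SEA}, replacing the Hardy--Littlewood--Sobolev step with the weighted fractional integration estimate in Lemma \ref{WighFraInt}, which is precisely designed to accommodate the two-regime decay prescribed by Hypothesis (B). Since the scenarios in which \eqref{HB} is invoked force $\tr B = 0$ (cf.\ Theorem \ref{T:B} and the discussion following the anomalous case), the exponential weights in the dispersive bound of Theorem \ref{T:de} reduce to $1$, and the argument is a clean convolution inequality.

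First I would retrace the computation carried out in the proof of Theorem \ref{T:SEA} up through inequality \eqref{nice}, which did not use the lower bound \eqref{H0} but only the dispersive estimate. Setting $\tr B = 0$ (or more generally absorbing $e^{\tfrac{t\tr B}{2}}$ into the relevant mixed norms), this yields
\[
\|T^{\star}T(F)(\cdot,t)\|_{L^r_x} \,\le\, C \int_{\R} \frac{\|F(\cdot,s)\|_{L^{r'}_x}}{V(|t-s|)^{\frac12-\frac1r}}\, ds.
\]
Next, invoking Hypothesis (B), the convolution kernel $k(\tau) = V(|\tau|)^{-(\frac12-\frac1r)}$ satisfies the hypothesis of Lemma \ref{WighFraInt} with
\[
\gamma_1 \,=\, D\left(\tfrac12-\tfrac1r\right),\qquad \gamma_2 \,=\, D_\infty\left(\tfrac12-\tfrac1r\right).
\]

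The second step is to verify the parameter constraints of Lemma \ref{WighFraInt}. One has $0<\gamma_1<1$ from the restrictions $r>2$ and $r<\frac{2D}{D-2}$; the bound $\gamma_2<1$ follows because $D_\infty < D$ forces $\frac{2D_\infty}{D_\infty-2} > \frac{2D}{D-2}$ (when $D_\infty>2$), so the same range of $r$ works. The admissibility relation \eqref{admissible} rewrites as
\[
\gamma_1 \,=\, 1 - \tfrac{2}{q} \,=\, 1 + \tfrac{1}{q} - \tfrac{1}{q'},
\]
which is precisely the equality required for the pair $(p_1,q_1)=(q',q)$. The condition \eqref{inftyadm2} gives
\[
\gamma_2 \,\ge\, \tfrac{2}{q_\infty} \,=\, 1 + \tfrac{1}{q_\infty} - \tfrac{1}{q'_\infty},
\]
which is the inequality required for the pair $(p_2,q_2) = (q'_\infty, q_\infty)$.

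Finally I would apply Lemma \ref{WighFraInt} to the scalar function $f(s) = \|F(\cdot,s)\|_{L^{r'}_x}$, obtaining
\[
\bigl\|\, \|T^{\star}T(F)(\cdot,\cdot)\|_{L^r_x} \,\bigr\|_{L^{q}_t + L^{q_\infty}_t} \,\le\, C\, \bigl\|\, \|F(\cdot,\cdot)\|_{L^{r'}_x}\, \bigr\|_{L^{q'}_t \cap L^{q'_\infty}_t},
\]
which, after transferring the decomposition $k = k_1 + k_2$ of the kernel into a corresponding decomposition of $T^{\star}T(F)$, yields \eqref{FStriDrift}. The main obstacle is conceptual rather than technical: one must recognize that the double behavior of $V$ (polynomial of degree $D$ near the origin, polynomial of degree $D_\infty$ at infinity) transfers directly into a convolution kernel with two decay regimes, and that Lemma \ref{WighFraInt} then produces a norm on the \emph{sum space} $L^{q}_t + L^{q_\infty}_t$ rather than on a single $L^q_t$. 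No other genuinely new ingredient beyond those in Section \ref{S:T} is needed.
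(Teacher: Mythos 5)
Your argument is correct and follows essentially the same route as the paper: pass from \eqref{nice} to a convolution bound with the two-regime kernel furnished by \eqref{HB}, then apply Lemma \ref{WighFraInt} with $(p_1,q_1)=(q',q)$, $(p_2,q_2)=(q_\infty',q_\infty)$, $\gamma_1=D(\tfrac12-\tfrac1r)$, $\gamma_2=D_\infty(\tfrac12-\tfrac1r)$; your observation that $\tr B=0$ in every scenario where Hypothesis (B) is invoked is also how the paper silently discards the exponential factor. One small slip: the admissibility relation gives $\gamma_1=\tfrac{2}{q}=1+\tfrac1q-\tfrac1{q'}$, not $\gamma_1=1-\tfrac{2}{q}$, but the identity you actually use in applying the lemma is the correct one.
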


\begin{proof}
If in the estimate \eqref{nice} we use the assumption \eqref{HB} instead of \eqref{H0}, we obtain
\begin{align*}
\|T^{\star}T(F)\|_{L^{q_{1}}_{t}+L^{q_{2}}_{t}L^{r}_{x}}&\lesssim \|\int_{\R}\frac{\|F(s,\cdot)\|_{L^{r'}_{x}(\mathbb{R}^{m})}}{\min\{|t-s|^{D(\frac{1}{2}-\frac{1}{r})},|t-s|^{D_{\infty}(\frac{1}{2}-\frac{1}{r})}\}}ds\|_{L^{q_{1}}_{t}+L^{q_{2}}_{t}}. 
\end{align*}
By our choice of $r$, we know that $0<D(\frac{1}{2}-\frac{1}{r})<1$, see \eqref{derange} and the discussion following it. Therefore, if
\begin{align}
1<p_{1}<q_{1}<\infty, \ \ D(\frac{1}{2}-\frac{1}{r})=1+\frac{1}{q_{1}}-\frac{1}{p_{1}},
\end{align}   
and  
\begin{align*}
D_{\infty}(\frac{1}{2}-\frac{1}{r})\ge1+\frac{1}{q_{2}}-\frac{1}{p_{2}},
\end{align*}
then applying Lemma \ref{WighFraInt} we infer that
\begin{align*}
\|T^{\star}T(F)\|_{L^{q_{1}}_{t}+L^{q_{2}}_{t}L^{r}_{x}}\lesssim \|F\|_{L^{p_{1}}_{t}\cap L^{p_{2}}_{t} L^{r'}_{x}}.
\end{align*}
To complete the proof of \eqref{FStriDrift}  it suffices to take $q_1 = q, p_1 = q'$,  $q_{2}=q_{\infty}$, $p_2 = q'_{2} = q_\infty'$. This gives
\begin{align*}
D(\frac{1}{2}-\frac{1}{r})=\frac{2}{q},\ \ \ D_{\infty}(\frac{1}{2}-\frac{1}{r})\ge\frac{2}{q_{\infty}},
\end{align*}
which are precisely the standing assumptions.

\end{proof}

\begin{theorem}\label{T:SE2K2}
Assume \eqref{HB} and suppose that $r$,$q$ and $q_{\infty}$ be as in Theorem \ref{T:SE2K}. Then for every $\vf\in L^2(\Rn)$ one has 
\begin{align}\label{SE22K}
		\|T^{\star}\vf\|_{L^{q}_{t}+L^{q_{\infty}}_{t} L^{r}_{x}}\le C(n,r,q) \|\vf\|_{L^2(\Rn)}.
	\end{align}
\end{theorem}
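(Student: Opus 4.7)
The plan is to mimic the $T^\star T$ argument used in the proof of Theorem \ref{T:SE2A}, but with the crucial modification that the sum/intersection duality between $L^q + L^{q_\infty}$ and $L^{q'} \cap L^{q'_\infty}$ (recalled in the Notation subsection from \cite{BS}) replaces the self-dual role played there by ordinary $L^q_t L^r_x$.

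First I would use Theorem \ref{T:SE2K}, which provides the basic $T^\star T$ bound
\[
\|T^\star T(F)\|_{L^{q}_{t}+L^{q_{\infty}}_{t} L^{r}_{x}}\le C \|F\|_{L^{q'}_{t}\cap L^{q'_{\infty}}_{t} L^{r'}_{x}}.
\]
Next I would bound $\|T(F)\|_{L^2_x}$ for $F\in\mathscr S(\R^{n+1})$ via the self-adjointness-type identity
\[
\|T(F)\|_{L^2_x}^{2}=|\langle T(F),T(F)\rangle|=|\langle T^{\star}T(F),F\rangle|.
\]
Applying H\"older's inequality in $x$ for the dual pair $(L^r_x, L^{r'}_x)$ reduces the inner pairing to
\[
|\langle T^{\star}T(F),F\rangle|\le \int_\R \|T^\star T(F)(\cdot,t)\|_{L^r_x}\,\|F(\cdot,t)\|_{L^{r'}_x}\,dt.
\]
Now I would use the duality $(L^q_t+L^{q_\infty}_t)'=L^{q'}_t\cap L^{q'_\infty}_t$ to handle the time integration, yielding
\[
|\langle T^{\star}T(F),F\rangle|\le \|T^\star T(F)\|_{L^{q}_{t}+L^{q_{\infty}}_{t} L^{r}_{x}}\,\|F\|_{L^{q'}_{t}\cap L^{q'_{\infty}}_{t} L^{r'}_{x}}.
\]
Combining with Theorem \ref{T:SE2K} then gives the key intermediate bound on $T$,
\[
\|T(F)\|_{L^2_x}\le C \|F\|_{L^{q'}_{t}\cap L^{q'_{\infty}}_{t} L^{r'}_{x}}.
\]

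To conclude, I would dualize this back. For any $\vf\in L^2(\Rn)$ and $F\in L^{q'}_t\cap L^{q'_\infty}_t(L^{r'}_x)$ (a dense subclass inside $L^1_t L^2_x$ where $T$ is defined by Proposition \ref{P:T}), one has
\[
|\langle T^\star(\vf),F\rangle|=|\langle \vf,T(F)\rangle|\le \|\vf\|_{L^2_x}\|T(F)\|_{L^2_x}\le C\|\vf\|_{L^2_x}\|F\|_{L^{q'}_t\cap L^{q'_\infty}_t L^{r'}_x}.
\]
Taking the supremum over all such $F$ of norm at most one and invoking once more the duality $(L^{q'}_t\cap L^{q'_\infty}_t)'=L^q_t+L^{q_\infty}_t$ yields precisely
\[
\|T^\star(\vf)\|_{L^q_t+L^{q_\infty}_t L^r_x}\le C\|\vf\|_{L^2(\Rn)},
\]
which is \eqref{SE22K}.

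The main obstacle, such as it is, is simply bookkeeping with the sum/intersection duality of mixed norms: one must justify that the standard $L^p$ duality in the space variable commutes with the sum/intersection duality in the time variable, and that the dense subclass issue does not interfere when extending $T^\star$ from $\So$ to $L^2(\Rn)$. Both points are standard (see the remark in the Notation subsection citing \cite{BS}), so beyond the reduction to Theorem \ref{T:SE2K} the argument is essentially a routine abstract duality computation.
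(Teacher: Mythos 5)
Your proposal is correct and is exactly the argument the paper intends: its proof of this theorem consists of the single line ``See the proof of Theorem \ref{T:SE2A}'', and you have carried out precisely that $T^\star T$ duality computation, with the pairing $(L^{q}_t+L^{q_\infty}_t)'=L^{q'}_t\cap L^{q'_\infty}_t$ replacing the plain $L^q_t$--$L^{q'}_t$ duality used there.
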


\begin{proof}
See the proof of Theorem \ref{T:SE2A}.
\end{proof}

\medskip

We can now provide the

\begin{proof}[Proof of Theorem \ref{T:strichartzoneKra}]
From the Strichartz estimate \eqref{SE22K} we know that
\begin{equation}\label{woww}
\|e^{\frac{\tr B}2 t}  \left(\int_{\Rn}|U(t)\vf(x)|^r dx\right)^{\frac{1}{r}}\|_{L^{q}_{t}+L^{q_{\infty}}_{t}} \le C(r,q) \|\vf\|_{L^2_x}.
\end{equation}
This estimate establishes the homogeneous part of \eqref{duhamel}. To control the non-homogeneous term, as in the proof of Theorem \ref{T:SE2K} we have
\begin{align*}
& ||\int_0^t U(t-s)(F(\cdot,s))(\cdot) ds||_{L^r_x}\le \int_0^t ||U(t-s)(F(\cdot,s))(\cdot)||_{L^r_x} ds
\\
& \le C(n,r) \int_0^t \frac{e^{-\frac{\tr B}{r}(t-s)}}{V(t-s)^{\frac 12 - \frac 1{r}}}\ ||F(\cdot,s)||_{L^{r'}_x} ds
\\
& \le C(n,r,\gamma) \int_0^t \frac{e^{-\frac{\tr B}{r}(t-s)} e^{-\left(\frac 12 - \frac 1{r}\right) \tr B (t-s)}}{\min\{(t-s)^{D\left(\frac 12 - \frac 1{r}\right)},(t-s)^{D_{\infty}\left(\frac 12 - \frac 1{r}\right)}\}}\ ||F(\cdot,s)||_{L^{r'}_x} ds.
\end{align*}
This estimate implies 
\begin{align*}
& e^{\frac{\tr B}{2} t}\ ||\int_0^t U(t-s)(F(\cdot,s))(\cdot) ds||_{L^r_x} \le C(n,r,\gamma) \int_0^t \frac{e^{\frac{\tr B}{2} s} ||F(\cdot,s)||_{L^{r'}_x}}{\min\{(t-s)^{D\left(\frac 12 - \frac 1{r}\right)},(t-s)^{D_{\infty}\left(\frac 12 - \frac 1{r}\right)}\}} ds
\\
& = C(n,r,\gamma) \int_\R \frac{h(s)}{\min\{(t-s)^{D\left(\frac 12 - \frac 1{r}\right)},(t-s)^{D_{\infty}\left(\frac 12 - \frac 1{r}\right)}\}} ds
\end{align*}
where $
h(s) = e^{\frac{\tr B}{2} s} ||F(\cdot,s)||_{L^{r'}_x}.$
Again by Lemma \ref{WighFraInt}, we finally obtain
\begin{align*}
& \|\int_\R e^{t \frac{\tr B}2} ||\int_0^t U(t-s)(F(\cdot,s))(\cdot) ds||_{L^r_x} dt\|_{L^{q}_{t}+L^{q_{\infty}}}
\le C^\star(n,r,\gamma)\ ||e^{\frac{\tr B}{2} t} ||F(\cdot,t)||_{L^{r'}_x}||_{L^{q'}_t\cap L^{q_{\infty}'}_{t}}.
\end{align*}

\end{proof}

\vskip 0.2in

\section{Large-time behavior of the volume function}\label{S:volume}

In this section we prove Theorems \ref{T:trB} and \ref{T:B}. 
We begin with an elementary consequence of the positivity of the matrix in \eqref{Ds}.

\begin{lemma}\label{L:log}
The function $t\to V(t)$ is strictly increasing on $(0,\infty)$.
\end{lemma}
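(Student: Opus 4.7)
The plan is to exploit the semigroup-type identity \eqref{mono}, which gives
\[
Q(t+s) - Q(t) = e^{tB}\, Q(s)\, e^{tB^\star}, \qquad s,t>0,
\]
together with the strict monotonicity of the determinant on the cone of positive-definite symmetric matrices.

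First, under Hypothesis \textbf{(H)} we have $Q(s)>0$ for every $s>0$, as recalled right after \eqref{mono}. Since $e^{tB}$ is invertible for every $t\in\R$, the conjugate $e^{tB}Q(s)e^{tB^\star}$ is congruent to $Q(s)$ and is therefore strictly positive definite as well. Consequently
\[
Q(t+s) - Q(t) > 0 \qquad \text{for all } s,t>0,
\]
i.e.\ $Q(t+s) > Q(t)$ in the sense of symmetric matrices.

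Next I would invoke the standard fact that, if $A, C$ are symmetric and $A > C > 0$, then $\det A > \det C$. A one-line justification: writing $A = C^{1/2}\bigl(I + C^{-1/2}(A-C)C^{-1/2}\bigr)C^{1/2}$, the middle factor has all eigenvalues strictly greater than $1$, so
\[
\det A = \det C \cdot \det\!\bigl(I + C^{-1/2}(A-C)C^{-1/2}\bigr) > \det C.
\]
Applying this with $A = Q(t+s)$ and $C = Q(t)$ yields $V(t+s) = \det Q(t+s) > \det Q(t) = V(t)$, which is the desired strict monotonicity. No step here is delicate; the only thing to be careful about is that \eqref{mono} is stated for $s,t>0$, and that positivity of $Q(s)$ for all $s>0$ under \textbf{(H)} is precisely what makes the increment strictly positive definite rather than merely nonnegative.
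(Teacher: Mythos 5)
Your proof is correct and follows essentially the same route as the paper: both use the identity \eqref{mono} to get $Q(t+s)>Q(t)$ and then pass to determinants, the only difference being that the paper cites Weyl's monotonicity theorem for the last step while you justify the determinant inequality directly via the factorization $A = C^{1/2}\bigl(I + C^{-1/2}(A-C)C^{-1/2}\bigr)C^{1/2}$.
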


\begin{proof}
It immediately follows from \eqref{mono} and  Weyl's monotonicity theorem (see \cite[Cor.8.4.10,(iv)]{Bernie}).

\end{proof}

Next, we make the trivial observation that, since $Q$ and $B$ are assumed to satisfy \textbf{(H)}, the same is true for $Q$ and $-B$. Since the matrix
\begin{equation}\label{tQ}
\tilde Q(t) = \int_0^t e^{-\tau B} Q e^{-\tau B^\star} d\tau,
\end{equation}
is the controllability Gramian associated of $Q$ and $-B$, we infer that it must be $\tilde Q(t)>0$ for every $t>0$. Making the change of variable $s = t-\tau$ in the definition of $Q(t)$ in \eqref{Ds}, we obtain
\begin{equation}\label{Qtilde}
Q(t) = e^{tB} \tilde Q(t) e^{tB^\star}.
\end{equation}
Taking the determinant of both sides of \eqref{Qtilde}, we reach the conclusion that 
\begin{equation}\label{trace2}
\frac{V(t)}{e^{2 t \tr B}} = \tilde V(t),
\end{equation}
where we have denoted $\tilde V(t) = \det \tilde Q(t)$.
The following lemma provides a basic consequence of \eqref{trace2}.

\begin{lemma}\label{L:ber}
The function $t\to \frac{V(t)}{e^{2 t \tr B}}$ is non-decreasing on $(0,\infty)$ and for $t\ge 1$ we have  
\begin{equation}\label{below}
\frac{V(t)}{e^{2 t \tr B}} \ge \tilde V(1)>0.
\end{equation}
\end{lemma}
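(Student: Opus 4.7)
The proof will rest entirely on the identity \eqref{trace2} established just above the lemma, which converts the claim about $V(t)/e^{2t\tr B}$ into a statement about the auxiliary function $\tilde V(t) = \det \tilde Q(t)$, where $\tilde Q(t)$ is defined in \eqref{tQ}. In view of this, my plan is to prove that $t \mapsto \tilde V(t)$ is non-decreasing on $(0,\infty)$ and strictly positive at $t=1$; the lemma then follows by substitution.

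The first step is to observe that $\tilde Q(t)$ is the controllability Gramian of the pair $(Q,-B)$. Since the invariant subspaces of $-B^\star$ coincide with those of $B^\star$, the hypothesis \textbf{(H)} for $(Q,B)$ automatically holds for $(Q,-B)$. By the Kalman criterion recalled after \eqref{Ds}, this gives $\tilde Q(t) > 0$ for every $t>0$; in particular $\tilde V(1) = \det \tilde Q(1) > 0$, which is the positivity claim in \eqref{below}.

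Next, I will mimic the proof of Lemma \ref{L:log}. Splitting the integral defining $\tilde Q(t+s)$ at $\tau=t$ and substituting $u=\tau-t$ in the second piece yields the analogue of \eqref{mono}, namely
\begin{equation*}
\tilde Q(t+s) \;=\; \tilde Q(t) + e^{-tB} \tilde Q(s) e^{-tB^\star}, \qquad s,t>0.
\end{equation*}
Because $\tilde Q(s)>0$, the second summand is positive definite, so $\tilde Q(t+s) > \tilde Q(t)$ in the sense of symmetric matrices. Applying Weyl's monotonicity theorem (exactly as in Lemma \ref{L:log}) gives $\tilde V(t+s) \ge \tilde V(t)$, so $\tilde V$ is non-decreasing on $(0,\infty)$.

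Combining these two steps with \eqref{trace2}, the function $t\mapsto V(t)/e^{2t\tr B} = \tilde V(t)$ is non-decreasing, and for $t\ge 1$ the monotonicity yields $\tilde V(t) \ge \tilde V(1) > 0$, which is \eqref{below}. I do not foresee a genuine obstacle here; the only small point that requires care is verifying that \textbf{(H)} is stable under the replacement $B\leadsto -B$, and that the splitting identity for $\tilde Q$ really does produce a positive semidefinite increment so that Weyl's theorem applies.
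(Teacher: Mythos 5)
Your proposal is correct and follows essentially the same route as the paper: the paper simply invokes Lemma \ref{L:log} applied to the pair $(Q,-B)$ to get monotonicity of $\tilde V$, and then reads off \eqref{below} from \eqref{trace2}, exactly as you do. The details you spell out (stability of \textbf{(H)} under $B\leadsto -B$, the splitting identity for $\tilde Q$, and Weyl's monotonicity theorem) are precisely the content of the observations preceding the lemma and of the proof of Lemma \ref{L:log}, so there is no gap.
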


\begin{proof}
From Lemma \ref{L:log}, applied to the matrices $Q$ and $-B$, we infer that $t\to \tilde V(t)$ is strictly increasing. This implies \eqref{below}.

\end{proof}

We will also need the following result.



\begin{proposition}\label{P:ii}
Suppose that $\operatorname{Rank}(Q) = n$, and that $\sigma(B)\subset i \mathbb{R}$. There exist $D_{\infty}\ge n$ and $\gamma>0$, such that for $t\ge 1$ one has
\begin{align*}
V(t)\ge \gamma t^{D_{\infty}}.
\end{align*}
\end{proposition}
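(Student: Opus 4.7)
The plan is to prove the bound via a doubling argument based on the semigroup identity \eqref{mono} and Minkowski's determinant inequality, which both become particularly effective when $\tr B=0$.

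First I would exploit the spectral assumption. Since $B$ is real and $\sigma(B)\subset i\mathbb{R}$, the non-zero eigenvalues of $B$ occur in conjugate pairs $\pm i\mu$, so $\tr B=0$. In particular, $|\det e^{tB}|=e^{t\tr B}=1$ for every $t\in\mathbb{R}$. This is the one place where the spectral hypothesis enters.

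Next I would set $s=t$ in \eqref{mono} to obtain the doubling identity
\[
Q(2t)=Q(t)+e^{tB}Q(t)e^{tB^\star}.
\]
Since $\operatorname{Rank}(Q)=n$, \textbf{(H)} forces $Q(t)>0$, and hence both summands are symmetric and strictly positive definite. Applying Minkowski's determinant inequality for positive definite matrices yields
\[
V(2t)^{1/n}\ge V(t)^{1/n}+\det\!\bigl(e^{tB}Q(t)e^{tB^\star}\bigr)^{1/n}=\bigl(1+|\det e^{tB}|^{2/n}\bigr)V(t)^{1/n}=2\,V(t)^{1/n},
\]
so $V(2t)\ge 2^{n}V(t)$. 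Iterating gives $V(2^k)\ge 2^{kn}V(1)$ for every $k\in\mathbb{N}$.

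Finally, given any $t\ge 1$, I would choose $k=\lfloor\log_2 t\rfloor$, so that $2^k\le t<2^{k+1}$. By the monotonicity of the volume function (Lemma \ref{L:log}),
\[
V(t)\ge V(2^k)\ge 2^{kn}V(1)\ge \Bigl(\frac{t}{2}\Bigr)^{n}V(1),
\]
which yields the desired estimate with $D_\infty=n$ and $\gamma=V(1)/2^n$. The only non-routine ingredient is the use of Minkowski's determinant inequality to turn the additive identity \eqref{mono} into a multiplicative doubling for $V$, and I do not expect any serious obstacle beyond that.
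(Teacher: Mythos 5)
Your proof is correct, but it takes a genuinely different and more elementary route than the paper's. The paper argues through the real Jordan form of $B$: after bounding $Q\ge\lambda I_n$ (this is where $\operatorname{Rank}(Q)=n$ enters), it reduces to $\det\int_0^t e^{sJ}e^{sJ^\star}\,ds$ and estimates this block by block, using nilpotency of the Jordan nilpotent parts and orthogonality of the exponentials of the skew-symmetric parts, so that each block contributes a polynomial factor of degree at least its size. Your argument instead extracts everything from the identity \eqref{mono} with $s=t$, Minkowski's determinant inequality for positive definite matrices, and the single observation that $\sigma(B)\subset i\R$ forces $\det e^{tB}=e^{t\tr B}=1$; this yields the clean doubling inequality $V(2t)\ge 2^nV(t)$, which combined with the monotonicity of $V$ from Lemma \ref{L:log} gives $V(t)\ge 2^{-n}V(1)\,t^n$ for $t\ge 1$. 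Two remarks. First, your proof never actually uses $\operatorname{Rank}(Q)=n$: hypothesis \textbf{(H)} alone guarantees $Q(t)>0$, so you have in fact proved the more general statement that $V(t)\gtrsim t^n$ for $t\ge1$ whenever \textbf{(H)} holds and $\tr B=0$ (consistent, e.g., with Example \ref{E:imspec} and with the Kolmogorov example in the Appendix, where $V(t)=t^4/12$). Second, your method only delivers the minimal exponent $D_\infty=n$, whereas the paper's block computation can in principle detect larger powers; but since the proposition merely asserts the existence of some $D_\infty\ge n$, and its only use (case (ii) in the proof of Theorem \ref{T:trB}) requires nothing beyond $V(t)\ge\gamma t^n$ for $t\ge1$, nothing is lost.
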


\begin{proof}
Since $Q>0$, there exist $\lambda>0$ such that $Q\ge \lambda I$. For any $\xi \in \Rn$ with $\xi\not= 0$, we have
\begin{align*}
\sa Q(t)\xi,\xi\da =\int_{0}^{t}\sa e^{sB}Qe^{sB^{\star}}\xi,\xi\da ds \ge \lambda \int_{0}^{t}\sa e^{sB}e^{sB^{\star}}\xi,\xi\da ds=\lambda \sa\left(\int_{0}^{t}e^{sB}e^{sB^{\star}} ds\right) \xi,\xi\da.
\end{align*}
By Weyl's monotonicity theorem, we infer that
\[
V(t) \ge \la^n \det \int_{0}^{t}e^{sB}e^{sB^{\star}} ds.
\]
Writing $B$ in its real Jordan form $B=PJP^{-1}$, see \cite[Theor.3.4.5]{HJ}, we obtain
\begin{align*}
\int_{0}^{t}e^{sB}e^{sB^{\star}}ds=P\left(\int_{0}^{t}e^{sJ}P^{-1}(P^{-1})^{\star}e^{sJ^{\star}}ds\right)P^{\star},
\end{align*}
which implies
\begin{align*}
V(t)\ge \la^n (\det P)^{2}\ \det \int_{0}^{t}e^{sJ}P^{-1}(P^{-1})^{\star}e^{sJ^{\star}}ds.
\end{align*}
Since $P^{-1}(P^{-1})^{\star}> 0$, arguing as before we see that there exists $\mu>0$ 
\begin{align*}
\det \int_{0}^{t}e^{sJ}P^{-1}(P^{-1})^{\star}e^{sJ^{\star}}ds\ge \mu^n \det \int_{0}^{t}e^{sJ}e^{sJ^{\star}}ds.
\end{align*}
These considerations show that, to complete the proof, it suffices to prove the existence of $D_\infty\ge n$ and $\gamma_0>0$, such that
\begin{equation}\label{hofm}
\det \int_{0}^{t}e^{sJ}e^{sJ^{\star}}ds\ \ge\ \gamma_0\ t^{D_\infty}\ \ \ \ \ \ t\ge 1.
\end{equation}
Since $\sigma(B)\subset i \mathbb{R}$, we have $\sigma(J)=\sigma(J^{\star})=\{0,\pm i\gamma_{1},...,\pm i\gamma_{p}\}$. Denoting with $n_1$ the multiplicity of the eigenvalue $\la = 0$, and with $m_\ell$ that of the eigenvalue $\la = i \gamma_\ell$, $\ell=1,...,p$, we have $n_{1}+2m_{1}+...+2m_{p}=n$. Since $J = \operatorname{diag}(J_{n_{1}},C_{m_1}(\gamma_1),...,C_{m_{p}}(\gamma_{p}))$, by \cite[Prop.11.2.8,(vi)]{Bernie}, we have 
\[
\det \int_{0}^{t}e^{sJ}e^{sJ^{\star}}ds = \det \int_0^t e^{s J_{n_1}}e^{s J_{n_1}^\star} ds\  \prod_{\ell=1}^p \det \int_0^t e^{s C_{m_{\ell}}(\gamma_{\ell})}e^{s C_{m_{\ell}}(\gamma_{\ell})^\star} ds.
\]
The proof will be completed if we show that there exist numbers 
\[
\gamma_{n_1}>0,\ \  d_{n_1}\ge n_1,\ \ \ \text{and}\ \ \ \ \ \ \gamma_{m_\ell}>0,\ \ d_{m_\ell}\ge m_\ell,\ \ \ \ell = 1,....,p,
\]
such that 
\begin{equation}\label{hwg}
\det \int_0^t e^{s J_{n_1}}e^{s J_{n_1}^\star} ds \ \ge\ \gamma_{n_1} t^{d_{n_1}},\ \ \ \ \det \int_0^t e^{s C_{m_{\ell}}(\gamma_{\ell})}e^{s C_{m_{\ell}}(\gamma_{\ell})^\star} ds\ \ge\ \gamma_{m_\ell}\ t^{2 d_{m_\ell}},\ \ \ \ t\ge 1.
\end{equation}
We recall that the $n_{1}\times n_{1}$ matrix $J_{n_{1}}$ and the $2m_{\ell}\times 2m_{\ell}$ matrices $C_{m_{\ell}}(\gamma_{\ell})$ are respectively in the form
\begin{align*}
J_{n_{1}}=\begin{pmatrix}
0 & 1 & 0 & \cdot & \cdot  & 0 \\
0 & 0 & 1 & \cdot  & \cdot &  0 \\
\cdot  & \cdot  & \cdot  & \cdot &  \cdot  & \cdot\\
\cdot & \cdot & \cdot & \cdot & \cdot   & \cdot\\
\cdot & \cdot & \cdot &\cdot & \cdot & 1 \\
0 & \cdot & \cdot & \cdot & \cdot &  0
\end{pmatrix}, \ \ \ C_{m_{\ell}}(\gamma_{\ell})=\begin{pmatrix}
C(\gamma_{\ell}) & I_{2} & 0 & \cdot & \cdot & \cdot & 0 \\
0 & C(\gamma_{\ell}) & I_{2} & 0 & \cdot & \cdot & 0 \\
\cdot & \cdot & \cdot & \cdot& \cdot & \cdot & \cdot \\
0 & \cdot & \cdot & \cdot & \cdot & \cdot &  I_{2} \\
0 & \cdot & \cdot & \cdot & \cdot & 0 & C(\gamma_{\ell})
\end{pmatrix},
\end{align*}
where 
\begin{align*}
C(\gamma_{\ell})=\begin{pmatrix}
0 & -\gamma_{\ell}\\
\gamma_{\ell} & 0
\end{pmatrix}, \ \ \ I_{2}=\begin{pmatrix}
1 & 0 \\
0 & 1
\end{pmatrix},
\end{align*}
see Theorem 3.4.1.5, (3.1.2) and (3.4.1.4)  in \cite{HJ}. 
Since the matrix $J_{n_{1}}$ is nilpotent, we have 
\begin{align*}
& \int_{0}^{t}e^{sJ_{n_{1}}}e^{sJ^{\star}_{n_{1}}}ds=\int_{0}^{t}\big(\sum_{i=0}^{n_{1}}\frac{(sJ_{n_{1}})^{i}}{i!}\big)\big(\sum_{i=0}^{n_{1}}\frac{(sJ_{n_{1}}^{\star})^{i}}{i!}\big)ds=tI_{n_{1}}+P^{(n_{1})}(t),
\end{align*}
where $P^{(n_{1})}(t)$ is a matrix whose entries are polynomials of degree at least $2$. We infer that
there exist $\gamma_{n_{1}}>0$ and $d_{n_{1}}\ge n_{1}$ such that if $t\ge 1$
\begin{align*}
	\det \int_{0}^{t}e^{sJ_{n_{1}}}e^{sJ_{n_{1}}^{\star}} ds\ \ge\ \gamma_{n_{1}}t^{d_{n_{1}}}.
\end{align*}
Next, by (11.2.15) in \cite{Bernie}, we can write 
\[
C_{m_{\ell}}(\gamma_{\ell}) =D_{m_{\ell}}(\gamma_{\ell})+N_{m_{\ell}}(\gamma_{\ell}),
\]
where $D_{m_{\ell}}(\gamma_{\ell})$ is skew-symmetric, $N_{m_{\ell}}(\gamma_{\ell})$ is nilpotent, and they commute. We thus have 
\[
e^{sC_{m_{\ell}}(\gamma_{\ell})}=e^{sN_{m_{\ell}}(\gamma_{\ell})}e^{sD_{m_{\ell}}(\gamma_{\ell})},
\] 
see \cite[Cor.11.1.6]{Bernie}. 
Since $D_{m_{\ell}}(\gamma_{\ell})$ is skew-symmetric, the matrix $e^{sD_{m_{\ell}}}$ is orthogonal, and we thus obtain
\begin{align*}
	\int_{0}^{t}e^{sC_{m_{\ell}}(\gamma_{\ell})}e^{sC_{m_{\ell}}^{\star}(\gamma_{\ell})}ds=&\int_{0}^{t}e^{sN_{m_{\ell}}(\gamma_{\ell})}e^{sN^{\star}_{m_{\ell}}(\gamma_{\ell})}ds=\\
	=&\int_{0}^{t}\big(\sum_{i=0}^{2m_{\ell}}\frac{(sN_{m_{\ell}}(\gamma_{\ell}))^{i}}{i!}\big)\big(\sum_{i=0}^{2m_{\ell}}\frac{(sN_{m_{\ell}}^{\star}(\gamma_{\ell}))^{i}}{i!}\big)ds=tI_{2m_{\ell}}+P^{(m_{\ell})}(t)>0
\end{align*}
where $P^{(m_{\ell})}(t)$ is a matrix whose entries are polynomials of degree at least $2$.
We conclude as before that there exist $\gamma_{m_{\ell}}>0$ and $d_{m_{\ell}}\ge m_{\ell}$ such that, if $t\ge 1$, 
\begin{align*}
\int_{0}^{t}e^{sC_{m_{\ell}}(\gamma_{\ell})}e^{sC_{m_{\ell}}^{\star}(\gamma_{\ell})}ds \ge \gamma_{m_{\ell}}t^{2d_{m_{\ell}}}.
\end{align*}
This completes the proof of \eqref{hwg}.

\end{proof}



We can now give the  

\vskip 0.2in

\begin{proof}[Proof of Theorem \ref{T:trB}]

We begin by noting that, based on \eqref{vicinoa0}, the estimate \eqref{H0} holds for $0<t\le 1$, regardless of the value of $\tr B$. We are thus left with proving that it is valid also for $t\ge 1$.
Assume now the hypothesis (i) in the statement of the theorem. There are three possible cases.

\medskip

\noindent \underline{Case (1)}: $\tr B<0$. If \eqref{quasi} holds, then for $t\ge 1$ the desired conclusion \eqref{H0} immediately follows  from \eqref{quasinfty} and the fact that $\tr B<0$. If instead there exists at least one $\la_0\in \sigma(B)$ such that $\Re \la_0\ge 0$, then by (iii) in Proposition \ref{P:boom0} we know that for some $c_0>0$ we have for $t\ge 1$
\[
V(t) \ge c_0 t.
\]
From this estimate we infer that the inequality \eqref{H0} holds for $t\ge 1$, provided that for some $C>0$
\[
e^{-t \tr B} \ge C t^{D-1}.
\]
Since $\tr B<0$, this is of course true.

\medskip

\noindent \underline{Case (2)}: $\tr B>0$. By appealing to \eqref{below} in Lemma \ref{L:ber}, we obtain for $t\ge 1$ 
\[
V(t) \ge \gamma_0\ e^{2 t \tr B},
\]
for some constant $\gamma_0>0$. Since we are assuming $\tr B>0$, it is clear from this estimate that, for some $\gamma>0$, the inequality \eqref{H0} holds true for every $t\ge 1$.

\medskip

\noindent \underline{Case (3)}: $\tr B = 0$. To complete the proof, we thus need to show that there exists $\gamma >0$ such that
\begin{equation}\label{op}
V(t) \ge \gamma\ t^D,\ \ \ \ \ \ \ t\ge 1.
\end{equation}
By the hypothesis $\sigma(B)\not\subset i \R$, there exists at least one $\la _0\in \sigma(B)$, such that $\Re \la_0 \not=0$. If $\Re \la_0>0$, then from (ii) in Proposition \ref{P:boom0} we have $V(t) \ge c_0 e^{2 \Re \la_0 t}$ for $t\ge 1$, and thus \eqref{op} follows. If instead $\Re \la_0<0$, then we claim that there must be at least one other $\la_1\in \sigma(B)$ such that $\Re \la_1>0$. Otherwise, if all other eigenvalues of $B$ had real part $\le 0$, we would have $\tr B < 0$, a contradiction. Again from (ii) in Proposition \ref{P:boom0} we have $V(t) \ge c_1 e^{2 \Re \la_1 t}$ for $t\ge 1$, and \eqref{op} follows. We have thus proved the theorem under the hypothesis (i).

Next, assume that hypothesis (ii) holds. Since $Q$ is invertible, by (b) following Definition \ref{D:hd}, we have $D = n$, and therefore \eqref{vicinoa0} implies
\[
V(t)\ge \gamma\ t^{n},\ \ \ \ \ 0\le t\le 1.
\]
Now, applying Proposition \ref{P:ii} we conclude that, for some $D_\infty \ge n$, we have
\begin{align*}
V(t)\ge \gamma\ t^{D_{\infty}},\ \ \ \ \ t\ge 1.
\end{align*} 
Combining the latter two inequalities, we reach the conclusion that \eqref{H0} holds for every $t\ge 0$.

Finally, assume that the hypothesis (iii) holds. In this case, the identity \eqref{voldil} gives
\[
V(t) = \gamma\ t^D,\ \ \ \ \ \ t>0,
\]
for some $\gamma >0$. Since, as we have noted, we must necessarily have $\tr B = 0$, we infer that \eqref{H0} holds in this case as well.

\end{proof}

The discussion that follows shows that, if in (ii) of Theorem \ref{T:trB} we substitute the assumption $\operatorname{Rank}(Q) = n$, with $\operatorname{Rank}(Q) < n$, the conclusion \eqref{H0} can fail significantly.

\medskip

\begin{example}[Failure of Hypothesis (A)]\label{E:imspec} \textnormal{Consider $2\times 2$ matrices $Q$ and $B$ as in \eqref{QB}, with $\operatorname{Rank}(Q)<2$, and $\sigma(B) \subset i \R$. Excluding the case $\sigma(B) = \{0\}$, the other possibility is $\sigma(B) = \{-i\gamma,i\gamma\}$ with $\gamma\not= 0$. Under this assumption, we can assume that
\[
Q = \begin{pmatrix} 1 & 0\\ 0 & 0\end{pmatrix},\ \ \ \ \ B = \begin{pmatrix} a & b\\ c & -a\end{pmatrix}, 
\]
with $a, b$ and $c$ satisfying the equation
\[
- bc = a^2 + \gamma^2 > 0.
\] 
The condition \textbf{(H)} imposes that $c\not= 0$, and therefore also $b\not= 0$, but the coefficient $a$ can be any real number. By the two-dimensional Rodrigues' formula, see e.g. \cite[Prop.11.3.2]{Bernie}, we have
\[
e^{sB} = \cos(s\gamma) I + \frac{\sin(s\gamma)}{\gamma} B.
\]
This easily gives
\[
e^{sB} Q e^{sB^\star} = \cos^2(s\gamma) Q + \frac{\sin(2s\gamma)}{2\gamma} (QB^\star + B Q) + \frac{\sin^2(s\gamma)}{\gamma^2} BQB^\star.
\]
Integrating this expression on $(0,t)$, after some elementary computations, we obtain
\begin{align}\label{2Vt}
Q(t) & = \frac 12 \left[t + \frac{\sin(2t\gamma)}2\right] Q + \frac 14\left[\frac 12 - \cos(2t\gamma)\right](QB^\star + B Q) + \frac 1{2\gamma^2} \left[t - \frac{\sin(2t\gamma)}2\right] BQB^\star
\\
& =  \frac t2 \left(C + \frac{A(t)}t\right),
\notag
\end{align}
where 
\begin{equation}\label{C}
C = Q + \frac 1{\gamma^2}BQB^\star 
= \begin{pmatrix} 1+\frac{a^2}{\gamma^2} & \frac{ac}{\gamma^2}\\ \frac{ac}{\gamma^2} & \frac{c^2}{\gamma^2}\end{pmatrix},
\end{equation}
and $A(t)$ is a matrix with bounded entries. Note that $\det C = \frac{c^2}{\gamma^2}>0$, therefore $C>0$. Since we will need a delicate generalization to higher dimension of this critical property  (see the proof of Theorem \ref{T:B}), we also give an alternative proof of the positivity of $C$. For every $x\in \R^2$, we have
\[
\sa Cx,x\da = |Q^{1/2}x|^2 + \frac 1{\gamma^2} |Q^{1/2} B^\star x|^2.
\]
Suppose that for some $x\not= 0$, we have $\sa Cx,x\da=0$. Then we must have both $x, B^\star x\in \operatorname{Ker}(Q)$, and therefore $E = \operatorname{span}\{x,B^\star x\} \subset \operatorname{Ker}(Q)$. But $E$ is a nontrival invariant subspace of $B^\star$. To see this, note that the Cayley-Hamilton theorem gives $(B^\star)^2 + \gamma^2 I = 0$, hence for any $\alpha, \beta \in \R$ we have 
\[
B^\star(\alpha x + \beta B^\star x) = \alpha B^\star x -\beta \gamma x\ \in E.
\]  
Since this contradicts \textbf{(H)}, we infer that it must be $\sa Cx,x\da>0$.
Now, note that \eqref{2Vt} implies the existence of $\gamma>0$ such that
\[
V(t) = \gamma t^2 \det(I + C^{-1} \frac{A(t)}t).
\]
Applying the formula (0.8.12) in \cite{HJ}, 
\begin{align}\label{det2}
\det (A+B)= \det A + \det B + \det A \tr(A^{-1}B),
\end{align}
with $A = I$ and $B = C^{-1} \frac{A(t)}t$, we reach the conclusion that
\begin{equation}\label{Vt22}
V(t) = \gamma t^2 (1 + O(t^{-1})),\ \ \ \ \ \text{as}\ t\to \infty.
\end{equation}
On the other hand, comparing with \eqref{QB} we see that, in the present example, we have $p_0 = \operatorname{Rank}(Q) = 1$, and that $B = \begin{pmatrix} \star_1 & \star_2\\ B_1& \star_3\end{pmatrix}$, with  
\[
\star_1 = (a),\ \ \star_2 = (b),\ \  B_1 = (c),\ \ \star_3 = (-a),
\]
so that $p_1 = \operatorname{Rank}(B_1) = 1$.
We conclude from \eqref{hd} that the local homogeneous dimension is $D = p_0 + 3 p_1 = 4$.
Therefore, in view of \eqref{Vt22} it is clear that \eqref{H0} cannot possibly hold. Instead, \eqref{HB} holds, with $D_\infty = 2$.}

\end{example}

Theorem \ref{T:B} provides a generalization to arbitrary dimension of the large time behavior \eqref{Vt22} of the volume function. 

\vskip 0.2in

\begin{proof}[Proof of Theorem \ref{T:B}]
Since when $n=2$ we have already proved this result in Example \ref{E:imspec}, we can assume $n\ge 3$. 
As in \eqref{Vt22}, it suffices to show that, for some $\gamma>0$, one has for $t\to \infty$
\begin{align}\label{T:Bexp}
V(t)= \gamma t^{n}+O(t^{n-1}).
\end{align}
Since it will be relevant in the proof of \eqref{T:Bexp}, we recall some facts concerning the case $n=3$. 
Consider an arbitrary skew-symmetric matrix $B\in M_{3\times 3}(\R)$  
\begin{align*}
B=\begin{pmatrix}
0 & -c & b \\
c & 0 & -a \\
-b & a & 0
\end{pmatrix}.
\end{align*}
We have $\sigma(B)=\{0,\pm i \theta\}$, with $\theta=\sqrt{a^{2}+b^{2}+c^{2}}$, see (1.48) in \cite{McC}. Since the characteristic polynomial is 
\[
p_{B}(\lambda)=\lambda(\lambda^{2}+\theta^{2}),
\]
if $\theta>0$ the Cayley-Hamilton theorem gives
\begin{align*}
\frac{1}{\theta^3} B^3 =- \frac 1\theta B.
\end{align*}
This identity is the key to the formula of O. Rodrigues which states 
\begin{align}\label{rod}
e^{B}=I_{3}+\frac{\sin \theta}{\theta} B+\frac{1-\cos \theta}{\theta^2} B^{2},
\end{align}
see (2.14) in \cite{MuLiSa}. This  immediately gives 
\begin{align*}
e^{sB}=I_{3}+\frac{\sin \theta s}{\theta} B+\frac{1-\cos \theta s}{\theta^2} B^{2}.
\end{align*}

\noindent Moving back to the case of arbitrary $n\ge 3$, by the assumption on $B$ there exist an invertible matrix $P$, and a matrix $J$, such that $J^\star = - J$, 
for which
\begin{align*}
B=PJP^{-1}\ \Longrightarrow\ e^{sB}=Pe^{sJ}P^{-1}.
\end{align*}
Denoting by $\tilde{Q}=P^{-1}Q (P^{-1})^{\star}$, we clearly have $\tilde Q \ge 0$, and $\operatorname{Rank}(\tilde Q)<n$. Since moreover  
\begin{align*}
Q(t) & =\int_{0}^{t}e^{sB}Qe^{sB^{\star}}ds= P\int_{0}^{t}e^{sJ} \tilde Qe^{sJ^{\star}}ds  P^{\star},
\end{align*} 
we have
\[
V(t)=(\det P)^{2}\ \det \int_{0}^{t}e^{sJ}\tilde{Q}e^{sJ^{\star}}ds.
\]
We can thus focus on showing that \eqref{T:Bexp} holds when we assume from the start that $B = J$, i.e.,
\[
V(t) = \det \int_{0}^{t}e^{sJ} Q e^{sJ^{\star}}ds.
\]  
Now, unless $J = O_n$ (which is ruled out by the fact that, for \textbf{(H)} to hold, we would need $Q>0$, against the assumption), from (b) in \cite[Cor. 2.5.11]{HJ} we know that, for $q\ge 1$, the nonzero eigenvalues of $J$ are $\pm i \gamma_1,...,\pm i \gamma_q$, $\gamma_k>0$, $k=1,...,q$, and moreover there exists a real orthogonal matrix $U$ such that  
\begin{equation}\label{U}
U J U^{-1} = O_{n-2q} \oplus \gamma_1 \begin{pmatrix} 0 & -1\\ 1 & 0\end{pmatrix} \oplus . . . \oplus \gamma_q \begin{pmatrix} 0 & -1\\ 1 & 0\end{pmatrix}.
\end{equation}
If we indicate by $\theta_1,..., \theta_p$ the positive imaginary parts of the distinct eigenvalues of $J$, then, using \eqref{U}, it was proved in \cite[Theor. 2.2]{GaXu} that there exist $p$ unique skew-symmetric matrices $J_{1},...,J_{p}\in M_{n\times n}(\R)$, $2p\le n$, such that  
\begin{equation}\label{Deco2}
J_{i} J_{j}= J_{j} J_{i}=O_{n},\ \ \ \ \ i\neq j,
\end{equation}
\begin{equation}\label{Deco3}
J_{i}^{3}=-J_{i},\ \ \ \ \ \ i = 1,...,p,
\end{equation} 
\begin{equation}\label{Deco1}
J =\theta_{1}J_{1}+...+\theta_{p}J_{p},
\end{equation}
and, furthermore, the following generalization of Rodrigues' formula \eqref{rod} holds
\begin{align}\label{DecoExpo}
e^{J}=I_{n}+\sum_{i=1}^{p}(\sin \theta_{i} J_{i}+(1-\cos \theta_{i})^{2}J_{i}^{2}).
\end{align}
Before proceeding, if $x\in \Rn\setminus\{0\}$, consider the subspace
\[
V(x) = \operatorname{span}\{J_{1}x,J_{1}^{2}x,J_{2}x,J_{2}^{2}x,...,J_{p}x,J_{p}^{2}x\}.
\] 
We claim that:
\begin{equation}\label{VV}
J^\star(V(x))\subset V(x),
\end{equation}
i.e., $V(x)$ is a nontrivial invariant space of $J^\star$. To see \eqref{VV}, let $y\in V(x)$. There exist $ \alpha_1,\beta_1,...,\alpha_p,\beta_p\in \R$ such that
\[
y = \alpha_1 J_1 x + \beta_1 J_1^2 x + ... + \alpha_p J_p x + \beta_p J_p^2 x.
\]
This gives
\begin{align*}
J^\star y & = \sum_{i,j=1}^p \theta_i \alpha_j  J^\star_i J_j x + \sum_{i,j=1}^p \theta_i \beta_j  J^\star_i J^2_j x 
\\
& = - \sum_{i,j=1}^p \theta_i \alpha_j  J_i J_j x - \sum_{i,j=1}^p \theta_i \beta_j  J_i J^2_j x 
\\
& = - \sum_{i=1}^p \theta_i \alpha_i  J_i^2 x - \sum_{i=1}^p \theta_i \beta_i  J_i^3 x,
\end{align*}
where we have used \eqref{Deco2}. By \eqref{Deco3}, we conclude that 
\[
J^\star y = - \sum_{i=1}^p \theta_i \alpha_i  J_i^2 x + \sum_{i=1}^p \theta_i \beta_i  J_i x\ \in V(x),
\]
which proves \eqref{VV}. Next, we note that, in view of \textbf{(H)} (which we are assuming for the matrix $J$), the inclusion \eqref{VV} implies that 
\begin{equation}\label{in}
V(x) \not\subset \ker Q,\ \ \ \ \forall x\in \Rn\setminus\{0\}.
\end{equation}
For the sequel, it is important to note that \eqref{in} implies that
\begin{equation}\label{inn}
\sum_{i=1}^{p}(|Q^{1/2} J_{i}x|^2 + |Q^{1/2} J_i^2 x|^2) > 0\ \ \ \ \forall x\in\Rn\setminus\{0\}.
\end{equation}
Otherwise, we would have $Q J_i x = Q J_i^2 x = 0$ for $i=1,...,p$, and therefore $V(x)\subset \ker Q$.
For subsequent purposes, we introduce the notation 
\[
\hat{J}=J_{1}+...+J_{p}.
\] 
We observe that $\hat J^\star = - \hat J$, and that, in view of \eqref{Deco2}, we have
\begin{equation}\label{hat2}
\hat{J}^{2}=J_{1}^{2}+...+J_{p}^{2},\ \ \ \ (\hat{J}^\star)^{2}=(J^\star_{1})^{2}+...+(J^\star_{p})^{2}.
\end{equation}
To continue, note that \eqref{DecoExpo} implies
\begin{align*}
e^{sJ}=I_{n}+\sum_{i=1}^{p}(\sin (\theta_{i}s)J_{i}+(1-\cos(\theta_{i}s))J_{i}^{2}).
\end{align*}
Consequently, we have
\begin{align}\label{gnam}
 e^{sJ}Qe^{sJ^{\star}}  = Q &+  \sum_{i=1}^{p}\bigg\{\sin(\theta_{i}s)J_{i}Q+(1-\cos(\theta_{i}s))J_{i}^{2}Q + \sin(\theta_{i}s)QJ_{i}^{\star}+(1-\cos(\theta_{i}s))Q(J_{i}^{\star})^{2}\bigg\}
\\
&+\sum_{i,j=1}^{p}\bigg\{\sin(\theta_{i}s)\sin(\theta_{j}s)J_{i}QJ_{j}^{\star}+(1-\cos(\theta_{i}s))(1-\cos(\theta_{j}s))J_{i}^{2}Q(J_{j}^{\star})^{2}\bigg\}
\notag\\
&+\sum_{i,j=1}^{p}\bigg\{\sin(\theta_{i}s)(1-\cos(\theta_{j}s))J_{i}Q(J_{j}^{\star})^{2}+(1-\cos(\theta_{i}s))\sin(\theta_{j}s)J_{i}^{2}QJ_{j}^{\star}\bigg\}.
\notag
\end{align}
Integrating both sides of \eqref{gnam} in $s\in [0,t]$, using \eqref{hat2} and observing that (see \eqref {2Vt}) 
\[
\int_{0}^{t}\sin^{2}(\theta_{i}s)ds=\frac{t}{2} +O(1),\ \ \ \ \ \ \int_{0}^{t}(1-\cos(\theta_{i}s))^{2}ds=\frac{3t}{2}+O(1),
\]
we reach the conclusion that 
\begin{align}\label{QQt}
Q(t)& = tQ+t\hat{J}^{2}Q+tQ(\hat{J}^{\star})^{2}+t\sum_{i=1}^{p}\left\{\frac{1}{2} J_{i} Q J_{i}^{\star}+ \frac{3}{2}J_{i}^{2}Q (J_{i}^{\star})^{2}\right\}+t\sum_{\substack{i,j=1,\\i\neq j}}^{p}J_{i}^{2}Q(J_{j}^{\star})^{2}+D(t)
\\
& = tQ+t\hat{J}^{2}Q+tQ(\hat{J}^{\star})^{2}+t \sum_{i,j=1}^{p}J_{i}^{2}Q(J_{j}^{\star})^{2}+\frac{t}{2}\sum_{i=1}^{p}\left\{J_{i} Q J_{i}^{\star}+ J_{i}^{2}Q (J_{i}^{\star})^{2}\right\}+D(t)
\notag\\
& = tQ+t\hat{J}^{2}Q+tQ(\hat{J}^{\star})^{2}+t\hat{J}^{2}Q(\hat{J}^{\star})^{2}+\frac{t}{2}\sum_{i=1}^{p}\left\{J_{i} Q J_{i}^{\star}+ J_{i}^{2}Q (J_{i}^{\star})^{2}\right\}+D(t)
\notag\\
& = t C  + D(t), 
\notag
\end{align}
where
\begin{equation}\label{CC}
C = Q+\hat{J}^{2}Q+Q(\hat{J}^{\star})^{2}+\hat{J}^{2}Q(\hat{J}^{\star})^{2}+\frac{1}{2}\sum_{i=1}^{p}(J_{i} Q J_{i}^{\star}+ J_{i}^{2}Q (J_{i}^{\star})^{2}),
\end{equation}
and we have denoted by $D(t)$ a matrix with bounded entries. We now make the following crucial

\vskip 0.2in

\noindent \underline{Claim:} $C>0$.

\vskip 0.2in

\noindent To prove the claim, we show that for any $x\in \Rn\setminus\{0\}$, we must have
\begin{equation}\label{Cx}
\sa Cx,x\da > 0.
\end{equation}
From \eqref{CC} we have
\begin{align}\label{CCx}
\sa Cx,x\da & = |Q^{1/2}x|^2 - 2 \sa\hat{J}Qx,\hat J x\da  + |Q^{1/2} \hat J^2 x|^2 + \frac{1}{2}\sum_{i=1}^{p}(|Q^{1/2} J_{i}x|^2 + |Q^{1/2} J_i^2 x|^2)
\\
& = |Q^{1/2}x|^2 + 2 \sa Q^{1/2} x,Q^{1/2}\hat J^2 x\da  + |Q^{1/2} \hat J^2 x|^2 + \frac{1}{2}\sum_{i=1}^{p}(|Q^{1/2} J_{i}x|^2 + |Q^{1/2} J_i^2 x|^2).
\notag
\end{align}
There exist two possibilities:

\noindent \underline{Case (i):}  $x\in \ker Q$. In such case, we obtain from \eqref{CCx}
\[
\sa Cx,x\da = |Q^{1/2} \hat J^2 x|^2 + \frac{1}{2}\sum_{i=1}^{p}(|Q^{1/2} J_{i}x|^2 + |Q^{1/2} J_i^2 x|^2) > 0,
\]
where the last inequality is justified by \eqref{inn}.

\noindent \underline{Case (ii):} $x\not\in \ker Q$. The Cauchy-Schwarz inequality gives
\[
- 2 \sa Q^{1/2} x,Q^{1/2}\hat J^2 x\da \le 2 |Q^{1/2} x| |Q^{1/2}\hat J^2 x|.
\]
Using this inequality in \eqref{CCx}, we obtain
\[
\sa Cx,x\da \ge \left(|Q^{1/2} x| - |Q^{1/2}\hat J^2 x|\right)^2 + \frac{1}{2}\sum_{i=1}^{p}(|Q^{1/2} J_{i}x|^2 + |Q^{1/2} J_i^2 x|^2) > 0,
\]
where, again, the last inequality follows from  \eqref{inn}. We have thus established \eqref{Cx}.
 
Returning to \eqref{QQt}, we see that the claim allows to represent
\begin{equation}\label{QQtt}
Q(t) = t C \left[I + (tC)^{-1} D(t)\right].
\end{equation}
To finish, we apply the formula 
\begin{equation}\label{detI}
\det(I+X) = \sum_{k=0}^n \tr \text{adj}_k(X),
\end{equation}
valid for any $X\in M_{n\times n}(\R)$ (see \cite[p. 29]{HJ}), with the choice $X= (tC)^{-1} D(t)$. Here, for $k=0,1,...,n$, we have let $n(k) = \begin{pmatrix} n \\ k\end{pmatrix}$, and denoted by $\text{adj}_k(X)\in M_{n(k)\times n(k)}(\R)$ the  $k$-th adjugate matrix of $X$, see \cite[(0.8.12.1)]{HJ}. Keeping in mind that 
\[
\operatorname{adj}_n(X) = 1,\ \ \ \operatorname{adj}_0(X) = \det X,\ \ \  \operatorname{adj}_1(X) = X,
\]
and noting that, for $1\le k \le (n-1)$,
\begin{align*}
\operatorname{adj}_{k}((tC)^{-1}D(t))=\frac{1}{t^{n-k}}\operatorname{adj}_{k}(C^{-1}D(t)) \implies \tr(\operatorname{adj}_{k}(\frac{1}{t}C^{-1}D(t)))=O(\frac{1}{t^{n-k}}), 
\end{align*}
we finally obtain from \eqref{QQtt} and \eqref{detI}
\begin{align*}
V(t)=\gamma t^{n}+ O(t^{n-1}),
\end{align*}
where $\gamma = \det C>0$. This proves \eqref{T:Bexp} when $B = J$, and we are thus finished.

\end{proof}





\vskip 0.2in


\section{Examples and final comments}\label{S:example} 

The purpose of this final section is to illustrate the scope of our results with some significant examples. We begin with the following generalization of the problem \eqref{quattroo} in Section \ref{S:exp}.

\begin{proposition}\label{E:inv}
In $\Rn\times \R$, consider the Cauchy problem
\begin{equation}\label{I}
\p_t u - i \Delta u -  \sa Bx,\nabla u\da   = F(x,t),\ \ \ \ \ \ \ u(x,0) = \vf(x).
\end{equation}
Fix $r>2$, such that $r<\frac{2n}{n-2}$ when $n\ge 3$, and consider a pair $(q,r)$ satisfying $\frac 2q = n\left(\frac 12 - \frac 1r\right)$. Then one has for the 
solution $u$ of \eqref{I}  
\begin{align}\label{stri4}
& \bigg(\int_\R e^{\frac{q\tr B}2 t}  \big(\int_{\Rn}|u(x,t)|^r dx\big)^{\frac qr} dt\bigg)^{\frac 1q} \le C \bigg\{\|\vf\|_{L^2(\Rn)}
 + \bigg(\int_\R e^{\frac{q'\tr B}2 t}  \big(\int_{\Rn}|F(x,t)|^{r'} dx\big)^{\frac{q'}{r'}} dt\bigg)^{\frac 1{q'}}\bigg\}.
\end{align}
\end{proposition}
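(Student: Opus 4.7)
The plan is to deduce Proposition \ref{E:inv} as an immediate specialization of Theorem A. First I would identify the relevant structural parameters for the Cauchy problem \eqref{I}. Here the diffusion matrix is $Q = I_n$, which is strictly positive. According to the remark (b) following Definition \ref{D:hd}, this forces $p_0 = n$ and $p_j = 0$ for $j \ge 1$, so the local homogeneous dimension equals $D = n$. In the canonical form \eqref{QB}, the drift matrix $B$ coincides with the upper-left $\star$-block, which may be an arbitrary element of $\mathbb{M}_{n\times n}(\R)$; in particular the pair $(Q,B)$ automatically satisfies the controllability hypothesis \textbf{(H)}. With this value of $D$, the admissibility condition \eqref{admissible} reduces precisely to the stated $\frac{2}{q} = n\bigl(\frac 12 - \frac 1r\bigr)$, and the constraint $r < \frac{2D}{D-2}$ for $D>2$ becomes $r < \frac{2n}{n-2}$ for $n \ge 3$, matching the hypotheses of the proposition.

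The only remaining point is verifying Hypothesis (A). Since $\operatorname{Rank}(Q) = n$, the pair $(Q,B)$ falls into either case (i) of Theorem \ref{T:trB} when $\sigma(B)\not\subset i\R$, or case (ii) when $\sigma(B) \subset i\R$ (the latter being available precisely because $Q$ is invertible). In either case, Theorem \ref{T:trB} supplies the lower bound $V(t) \ge \gamma\, t^n\, e^{t\tr B}$ valid for every $t > 0$, which is exactly \eqref{H0} with $D = n$.

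With all the hypotheses of Theorem A in place, I would then simply invoke \eqref{strichartzone} to obtain \eqref{stri4}. The integrability assumption on $F$ in the proposition is implicit in the right-hand side of \eqref{stri4} being finite, exactly as in Theorem A. There is no substantive obstacle here: the content of Proposition \ref{E:inv} is entirely the observation that the Schr\"odinger equation \eqref{I} with an arbitrary linear drift $\langle Bx,\nabla u\rangle$ falls within the Hypothesis (A) regime whenever the Laplacian is non-degenerate, a fact furnished by Theorem \ref{T:trB}. The only feature that distinguishes \eqref{stri4} from the classical Ginibre--Velo estimate \eqref{ginvel} is the appearance of the exponential weights $e^{\frac{q \tr B}{2} t}$ and $e^{\frac{q' \tr B}{2} t}$, whose presence was explained in Section \ref{S:exp} via the conformal self-similar transformation.
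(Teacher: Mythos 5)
Your proposal is correct and follows essentially the same route as the paper: identify $D=n$ from the invertibility of $Q=I_n$, note that the admissibility range then coincides with the Ginibre--Velo one, and invoke Theorem A. Your explicit verification of Hypothesis (A) via the dichotomy between cases (i) and (ii) of Theorem \ref{T:trB} is in fact slightly more careful than the paper's own (which leaves that step implicit), and it is exactly the intended justification.
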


\begin{proof}
Since $\operatorname{Rank}(Q) =n$, according to Definition \ref{D:hd} the local homogeneous dimension of \eqref{I} is $D = n$. Therefore, the admissibility range from \eqref{admissible} is the same as in \cite{GVstrich}, and the estimate \eqref{stri4} follows from Theorem A. When, in particular, $B = - I_n$, we recover problem \eqref{quattroo}.
If instead the matrix $B$ satisfies one of the hypothesis (i), (ii) in Theorem \ref{T:trB}, then $\tr B = 0$, and therefore \eqref{stri4} gives for \eqref{I} the same estimate that one would obtain for a zero drift.

\end{proof}

\medskip

Next, we discuss an interesting family of problems in which the Hamiltonian displays a varying degeneracy. Fix $n\ge 2$ and for any $1\le k \le n$, write points $x = (x_1,...,x_n)\in \Rn$ as $(\hat x_k,x_{k+1},...,x_n)$, where $\hat x_k = (x_1,...,x_k)$. Let $\hat \Delta_{k} = \p_{x_1x_1} + ... + \p_{x_k x_k}$ represent the Laplacian with respect to the coordinates $\hat x_k$. 
 
\begin{proposition}\label{E:fan} For each $k=1,...,n$, consider the Cauchy problem for the Schr\"odinger equation in $\Rn\times (0,\infty)$  
\begin{equation}\label{bad}
\p_t u = i \hat \Delta_k u + x_1 \p_{x_2} u + . . . + x_{n-1} \p_{x_n} u + F(x,t),\ \ \ \ \ u(x,0) = \vf(x).
\end{equation}
Then the local homogeneous dimension of \eqref{bad} is given by the number
\begin{equation}\label{Dk}
D_n(k) = n + (n-k+1)(n-k),\ \ \ \ \ \ 1\le k\le n,
\end{equation}
with corresponding \emph{Strichartz pairs} 
\begin{equation}\label{stricouples}
r_n(k) = \frac{2(D_n(k)+2)}{D_n(k)} \nearrow,\ \ \ \ \ r'_n(k) = \frac{2(D_n(k)+2)}{D_n(k)+4}\searrow,\ \ \ \ \ \ 1\le k\le n.
\end{equation}
Let $q_n(k)$ be the exponent defined by the equation
\begin{equation}\label{kad}
\frac{2}{q_n(k)} = D_n(k)\left(\frac 12 - \frac{1}{r_n(k)}\right),\ \ \ \ \ \ 1\le k\le n.
\end{equation}
Then the following Strichartz estimate holds
\begin{align}\label{stribad}
& \bigg(\int_\R ||u(\cdot,t)||^{q_n(k)}_{L^{r_n(k)}(\Rn)} dt \bigg)^{\frac 1{q_n(k)}} \le C \bigg\{\|\vf\|_{L^2(\Rn)} + \bigg(\int_\R ||F(\cdot,t)||^{q'_n(k)}_{L^{r'_n(k)}(\Rn)} dt \bigg)^{\frac 1{q'_n(k)}}\bigg\}.
\end{align}
\end{proposition}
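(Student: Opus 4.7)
The plan is to cast \eqref{bad} as a Cauchy problem of the form \eqref{A}, put the coefficient matrices into canonical form \eqref{QB}, compute $D_n(k)$, verify Hypothesis (A), and invoke Theorem A. The associated matrices are $Q = \operatorname{diag}(\underbrace{1,\ldots,1}_k,\underbrace{0,\ldots,0}_{n-k})$ and $B$ the subdiagonal shift $B_{i+1,i}=1$ for $i=1,\ldots,n-1$, all other entries zero. Taking $p_0 = k$ and partitioning $B$ into blocks of sizes $(k,1,1,\ldots,1)$, one sees that all strictly upper-triangular partition blocks of $B$ vanish, each subdiagonal block $B_j=(1)$ has rank $1$, and the $(1,1)$-block $\star_1$ is the $k\times k$ lower shift. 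Thus \eqref{QB} holds with $p_1=\cdots=p_{n-k}=1$, and Definition \ref{D:hd} gives
\begin{equation*}
D = k + \sum_{j=1}^{n-k}(2j+1) = k + (n-k)(n-k+2) = n + (n-k+1)(n-k) = D_n(k).
\end{equation*}
Assumption \textbf{(H)} is satisfied: since $B^\star$ sends $e_j$ to $e_{j-1}$ (with $e_0=0$), any nonzero vector of $\operatorname{Ker}(Q) = \operatorname{span}\{e_{k+1},\ldots,e_n\}$ exits $\operatorname{Ker}(Q)$ after finitely many iterations of $B^\star$, ruling out a nontrivial invariant subspace.

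Since $\tr B = 0$, verifying Hypothesis (A) reduces to $V(t) \ge \gamma\, t^{D_n(k)}$ for every $t>0$. When $k=n$ the matrix $Q$ is invertible and $\sigma(B) = \{0\}\subset i\R$, so Theorem \ref{T:trB}(ii) applies; when $k=1$ the scalar block $\star_1 = (0)$ vanishes, so $B$ coincides with $\overline B$ of \eqref{barB} and Theorem \ref{T:trB}(iii) applies. For the remaining range $2\le k\le n-1$ we are in the anomalous case and must argue directly. Nilpotency of $B$ makes $V(t)$ a polynomial in $t$; from the expansion
\begin{equation*}
(e^{sB}Qe^{sB^\star})_{ij}=\sum_{l=1}^{\min(i,j,k)} \frac{s^{i+j-2l}}{(i-l)!(j-l)!}
\end{equation*}
the top-degree term of $Q(t)_{ij}$ (from $l=1$) is $t^{i+j-1}/[(i-1)!(j-1)!(i+j-1)]$. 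Factoring $D(t) = \operatorname{diag}(t^{1/2},t^{3/2},\ldots,t^{(2n-1)/2})$, the leading part of $Q(t)$ equals $D(t)\,X\,D(t)$, where $X_{ij} = \int_0^1 \frac{s^{i-1}}{(i-1)!}\frac{s^{j-1}}{(j-1)!}\,ds$ is the Gram matrix of $n$ linearly independent functions in $L^2([0,1])$; hence $\det X>0$ and $V(t)$ has exact degree $D_\infty = n^2$ with positive leading coefficient. Moreover
\begin{equation*}
D_\infty - D_n(k) = n^2 - n - (n-k+1)(n-k) = (k-1)(2n-k) \ge 0.
\end{equation*}

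To conclude Hypothesis (A) we combine three ingredients: the local estimate \eqref{vicinoa0} provides $V(t)\ge \gamma_1\, t^{D_n(k)}$ on $(0,1]$; the leading coefficient $\det X > 0$ yields, for $T_0$ large enough, $V(t) \ge \frac{1}{2}(\det X)\, t^{n^2} \ge \gamma_2\, t^{D_n(k)}$ on $[T_0,\infty)$; and since $Q(t)>0$ for $t>0$ by \textbf{(H)}, the function $V(t)/t^{D_n(k)}$ is continuous and strictly positive on the compact interval $[1,T_0]$, hence bounded below. Putting these bounds together gives Hypothesis (A). Applying Theorem A with the admissible pair $(q_n(k), r_n(k))$ of \eqref{stricouples}, \eqref{kad}, and observing that $\tr B = 0$ collapses the exponential weights in \eqref{strichartzone} to $1$, produces \eqref{stribad}. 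The principal obstacle is the verification of Hypothesis (A) in the anomalous range $2\le k\le n-1$: there the global lower bound $V(t)\ge \gamma\, t^{D_n(k)}$ does not follow from Theorem \ref{T:trB}, and must be extracted by controlling $V(t)$ at both ends of its range, using the explicit leading coefficient $\det X$ at infinity together with the local homogeneous expansion near $t=0$.
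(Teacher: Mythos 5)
Your proposal is correct and follows essentially the same route as the paper: cast the problem in the form \eqref{A}, verify \textbf{(H)}, read off $D_n(k)$ from the canonical form, handle $k=n$ and $k=1$ via Theorem \ref{T:trB}, and for intermediate $k$ establish $V_k(t)\sim \gamma\, t^{n^2}$ at infinity so that $D_\infty=n^2\ge D_n(k)$ yields Hypothesis (A) and Theorem A applies. The only (cosmetic) difference is in extracting the leading coefficient of the polynomial $V_k(t)$: you factor the top-degree part as $D(t)XD(t)$ with $X$ a Gram matrix, while the paper decomposes $Q(t)=Q_1(t)+\cdots+Q_k(t)$ and compares entries to $Q_1(t)$ — the two computations identify the same positive constant $V_1(1)=\det X$, and your explicit treatment of the intermediate range $[1,T_0]$ by compactness is a correct filling-in of a step the paper leaves implicit.
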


\begin{proof}
The differential equation in \eqref{bad} can be represented in the form \eqref{A}, if we take
\begin{equation}\label{QkB}
Q = Q(k) = \begin{pmatrix} I_{k} & O_{k\times (n-k)}\\O_{(n-k)\times k} & O_{(n-k)\times(n-k)}\end{pmatrix},\ \ \ B = \begin{pmatrix} 0 & 0 & \cdot & \cdot & \cdot & 0 & 0
\\
1 & 0 & \cdot & \cdot & \cdot & 0 & 0
\\
0 & 1 & \cdot & \cdot & \cdot &  0 & 0
\\
\cdot & \cdot & \cdot & \cdot & \cdot & \cdot & \cdot
\\
\cdot & \cdot & \cdot & \cdot & \cdot & \cdot & \cdot
\\
\cdot & \cdot & \cdot & \cdot & \cdot & \cdot & \cdot
\\
0 & 0 & \cdot & \cdot & \cdot &  1 & 0
\end{pmatrix}.
\end{equation}
Note that $\sigma(B) = \{0\}$, $B$ is nilpotent since $B^n = O_n$, and $B^\star e_j = e_{j-1}$ for $j=1,...,n$. Therefore, the only nontrivial invariant subspaces of $B^\star$ are
\[
E_j = \operatorname{span}\{e_1,...,e_j\},\ \ \ \ \ \ j=1,...,n-1.
\]
Since $\operatorname{Ker} Q(k) = \operatorname{span}\{e_{k+1},...,e_n\}$, it is clear that the matrices $Q(k)$ and $B$ satisfy the hypothesis \textbf{(H)}. To compute the homogeneous dimension of \eqref{bad}, for each $1\le k\le n$, we need to write $B$ in the canonical form \eqref{QB} corresponding to $Q(k)$. We start by considering $k=n$. In this case we clearly have $p_{0}=n$ and $B_{1}=B$. Since $p_{j}=0$ if $j>1$, we find
\begin{align*}
D_{n}(n)=n.
\end{align*}
If instead $k\in\{1,...,n-1\}$, we write
\begin{align}\label{Bbad}
B=\begin{pmatrix}
\star_{(1)} & \star_{(2)} & \star_{(3)} & \cdot &\star_{(n-k)} &  \star_{(n-k+1)} \\
B_{1} & 0 &  \\
0 & B_{2} & 0 \\
\cdot & \cdot & \cdot \\
0 & \cdot & \cdot &  0 & B_{n-k} & 0
\end{pmatrix},
\end{align}
where 
$\star_{(1)}$ is the $k\times k$ matrix 
\begin{align*}
\star_{(1)}=\begin{pmatrix}
0 & 0 & \cdot & \cdot & 0 \\
1 & 0 & \cdot & \cdot & 0 \\
\cdot & \cdot & \cdot & \cdot & \cdot \\
0 & \cdot & \cdot &  1 & 0
\end{pmatrix},
\end{align*}
$\star_{(\ell)}$ is the $(k+\ell-2)\times 1$ zero matrix, and
\begin{align*}
B_{1}=\begin{pmatrix}
0 & \cdot & \cdot & \cdot & 1
\end{pmatrix} \in M_{1\times k}(\R),\ \ \ \  B_{2}=...=B_{n-k}= (1)\in M_{1\times 1}(\R).
\end{align*}
Consequently, we have $p_{0}=k$, $p_{1}=p_{2}=...=p_{n-k}=1$, and $p_{j}=0$ if $j>n-k$. It thus follows that 
\begin{align*}
D_{n}(k)&=k+3p_{1}+5p_{2}+...+(2(n-k)+1)p_{n-k}=\\
&=k+\sum_{j=1}^{n-k}(2j+1)=n+(n-k+1)(n-k),
\end{align*}
which proves \eqref{Dk}. Once the local homogeneous dimension $D_n(k)$ is known, the Strichartz pairs \eqref{stricouples}, and the admissibility pairs $(q_n(k),r_n(k))$ in \eqref{kad} are respectively determined by \eqref{rr} and \eqref{admissible}. To complete the proof, we are left with showing \eqref{stribad}. This is a delicate part, as we need to understand whether \eqref{H0} or \eqref{HB} are valid, and consequently whether Theorem A or B applies. For $1\le k\le n$, we denote 
\[
V_k(t) = \det Q_k(t) = \det \int_0^t e^{sB} Q(k) e^{sB^\star} ds.
\]
When $k = n$, the corresponding equation in \eqref{bad} is non-degenerate. Since from \eqref{Dk} we know that $D_n(n) = n$, and since $\tr B = 0$, Proposition \ref{P:ii} guarantees that \eqref{H0} is valid for the volume function $V_n(t)$. We infer that Theorem A applies, and \eqref{stribad} is valid with the same admissible pairs of the equation without drift.
When $k<n$, the quadratic form in \eqref{bad} becomes increasingly singular as $k\searrow 1$. The case $k=1$ is the most degenerate, corresponding to 
\begin{equation}\label{badd}
\p_t u = i \p_{x_1x_1} u + x_1 \p_{x_2} u + . . . + x_{n-1} \p_{x_n} u + F(x,t),\ \ \ \ \ u(x,0) = \vf(x).
\end{equation}  
Interestingly, this case is the easiest to decide since, within the family of problems \eqref{bad}, it is the only one in which, when $F=0$, the corresponding differential operator admits a global homogeneous structure given by the non-isotropic dilations  
\begin{equation}\label{badil}
\delta_\lambda(x,t) = \left(\lambda
x_1,\lambda^3
x_2,\ldots,\lambda^{2n-1}x_n,\lambda^2t\right).
\end{equation}
This follows from the fact that, when $k=1$, all the $\star$ matrices in  the canonical representation \eqref{Bbad} are zero matrices, whereas $B_1 = (1)\in M_{1\times 1}(\R)$, and therefore we are in the situation of hypothesis (iii) in Theorem \ref{T:trB}. Therefore, Theorem A applies, resulting in \eqref{stribad}, with
\begin{equation}\label{Dmaxx}
D = D_n(1) = \sum_{k=1}^{n} (2k - 1) = n^2.
\end{equation}
We observe that 
\begin{equation}\label{stack}
D_n(1)  = n^2\ge ...\ge D_n(n) = n.
\end{equation}
We remark that $D$ in \eqref{Dmaxx} is the spatial \emph{homogeneous dimension} of \eqref{badil}, in the sense of  \cite{RS}, \cite{FS}, \cite{NSW}.
It remains to decide the cases $2\le k\le n$. We already know from \eqref{voldil} that
\[
V_1(t) = V_1(1)\ t^{n^2}, \ \ \ \ \ \ t>0.
\]

\vskip 0.2in

\noindent \underline{Claim:} For $2\le k\le n$ we have as $t\to \infty$
\begin{align}\label{detQinfDym}
V_k(t) = V_{1}(1)\ t^{n^{2}}(1+o(1)).
\end{align}

\vskip 0.2in

\noindent The claim can be extracted from the proof of \cite[Ex.2, p.287]{Ku2}, but for the sake of completeness we provide some details. The matrix $B$ is the transposed of the Jordan canonical form of a non diagonalizable matrix with zero eigenvalues. It is immediate to check, see (11.2.18) in \cite{Bernie}, that
\begin{align*}
e^{sB}=\begin{pmatrix}
1 & 0 & \cdot & \cdot & \cdot & \cdot & 0 \\
s & 1 & \cdot & \cdot & \cdot & \cdot &  0 \\
\frac{s^{2}}{2!} & s & 1 & 0 & \cdot & \cdot  & 0\\
\frac{s^{3}}{3!} & \frac{s^{2}}{2!} & s & 1 & 0  & \cdot & 0\\
\cdot & \cdot & \cdot & \cdot & \cdot & \cdot & \cdot\\
\frac{s^{n-1}}{(n-1)!} & \cdot & \cdot & \cdot & \cdot & \cdot& 1
\end{pmatrix}.
\end{align*}
Let $e_1,...,e_n$ be the unit (column) vectors of the standard basis of $\Rn$. Observe that we can express the Gramian matrix associated with $Q$ and $B$ as in \eqref{QkB}, in the following way
\begin{align*}
Q(t)&=\int_{0}^{t}e^{sB}Qe^{sB^{\star}}ds=\int_{0}^{t}e^{sB}e_{1}e_{1}^{\star}e^{sB^{\star}}ds+...+\int_{0}^{t}e^{sB}e_{k}e_{k}^{\star}e^{sB^{\star}}ds=\\
&=Q_{1}(t)+...+Q_{k}(t).
\end{align*}
A direct computation shows that for $1\le r \le k$
\begin{align*}
Q_{r}(t)=\begin{pmatrix}
0_{(r-1)\times(r-1)} & 0_{(r-1)\times(n-r+1)}\\
0_{(n-r+1)\times(r-1)} & A_{r}(t)
\end{pmatrix},
\end{align*}
where $A_{r}(t)$ is a $(n-r+1)\times(n-r+1)$ matrix whose entires are
\begin{align*}
[A_{r}(t)]_{i,j}=\frac{t^{i+j-1}}{(i+j-1)(i-1)!(j-1)!}, \ \ \ \ i,j=1,...,n-r+1.
\end{align*}
For any $h,\ell \in \{1,...,n\}$ and for any $r\in \{2,...,k\}$ we cleary have
\begin{align*}
\lim\limits_{t\to \infty} \frac{[Q_{r}(t)]_{h\ell}}{[Q_{1}(t)]_{h\ell}}=0.
\end{align*}
As a consequence, we obtain
\begin{align*}
V_k(t) &= \sum_{i_{1},i_{2},...,i_{n}} \operatorname{sgn}\{i_{1},...i_{n}\}[Q(t)]_{1,i_{1}}...[Q(t)]_{n,i_{n}}=\\
&=t^{n^{2}}(1+o(1))\sum_{i_{1},i_{2},...,i_{n}} \operatorname{sgn}\{i_{1},...i_{n}\}[Q_{1}(1)]_{1,i_{1}}...[Q_{1}(1)]_{n,i_{n}}
\\
&=t^{n^{2}}V_{1}(1)(1+o(1)), 
\end{align*}
which establishes \eqref{detQinfDym}.
We emphasize that the claim proves that, for some $\gamma_k>0$, and with $D$ as in \eqref{Dmaxx}, one has
\[
V_k(t)\ge \gamma_k\ t^{D},\ \ \ \ \ \ \ t>0.
\]
From this estimate, and from \eqref{stack}, we conclude that \eqref{H0} is always valid, and therefore we can apply Theorem A, thus establishing \eqref{kad} for any of the problems \eqref{bad}, with $k=1,...,n$.
This finishes the proof.

\end{proof}

Proposition \ref{E:fan} shows that, despite the fact that, when $2\le k\le n$, the equation \eqref{bad} lacks of a global homogeneous structure, for each $k$ it is possible to determine a \emph{local homogeneous dimension} $D_n(k)$ which decides what Strichartz estimates to expect. 
We note, in particular (see \eqref{rr})  that if $\vf\in L^2(\Rn)$ and the forcing term $F\in L^{r'_n(k)}(\R^{n+1})$, then the solution $u$ to  \eqref{bad} belongs to $L^{r_n(k)}(\R^{n+1})$, and moreover there exists a universal $C(n,k)>0$ such that the following Strichartz estimate holds  
\begin{equation}\label{strichartzk}
||u||_{L^{r_n(k)}(\R^{n+1})} \le C(n,k) \left(||\vf||_{L^2(\Rn)} + ||F||_{L^{r'_n(k)}(\R^{n+1})}\right).
\end{equation}
The reader should note that, as $k\searrow 1$, an increasing integrability is required for the forcing term $F$ (i.e., $L^{r_n'(k)}$ instead of $L^{\frac{2(n+2)}{n+4}}$), while the solution $u$ experiences a corresponding increasing loss of integrability (i.e., $L^{r_n(k)}$ instead of $L^{\frac{2(n+2)}{n}}$).
This phenomenon of anomalous dispersion results from the strong degeneracy of the quadratic form in \eqref{bad}.
We finally note that, when $k=1$, the global dilations \eqref{badil} imply that the exponents in \eqref{strichartzk} are best possible. 

\medskip

\noindent As a final comment, we emphasize that estimates such as \eqref{strichartzk}, or the more general \eqref{stribad}, are highly sensitive to the spectrum 
$\sigma(B)$ of the drift matrix. Suppose, in fact, we change the matrix $B$ in \eqref{bad} by substituting the $0$ in the upper left-corner with a $\pm 2$. This changes $\sigma(B)$ from $\{0\}$, to $\{0,\pm 2\}$, with the eigenvalue $\la = 0$ having multiplicity $n-1$. If we consider the case $k=1$ of \eqref{badd}, then this change does not affect the local homogeneous dimension $D_n(1) = n^2$, and consequently the admissibility pairs $(q_n(1),r_n(1))$ remain unchanged. But the resulting estimate from \eqref{strichartzone} in Theorem A now contains an exponential weight, since $\tr B = \pm 2$. We thus have 
\begin{align}\label{strichartz2}
& \bigg(\int_\R e^{\pm q t} \big(\int_{\Rn}|u(x,t)|^r dx\big)^{\frac qr} dt\bigg)^{\frac 1q} \le C \bigg\{\|\vf\|_{L^2(\Rn)}
 + \bigg(\int_\R e^{\pm q' t} \big(\int_{\Rn}|F(x,t)|^{r'} dx\big)^{\frac{q'}{r'}} dt\bigg)^{\frac 1{q'}}\bigg\},
\end{align}
where for notational ease we have let $r = r_n(1), q = q_n(1)$, and $r', q'$ indicate their dual.




\vskip 0.2in


\begin{example}\label{E:kolmo} \textnormal{Another interesting example which falls within the scope of Theorem A is the Schr\"odinger equation associated with the degenerate Kolmogorov operator in \cite{Kol}
\begin{equation}\label{degschr}
\p_t u - i \Delta_x u - \sa x,\nabla_y u\da = F(x,y,t),\ \ \ \ u(x,y,0) = \vf(x,y),
\end{equation}
where we have let $x, y\in \R^m$, so that $(x,y)\in \Rn$ with $n = 2m$. For \eqref{degschr} we have 
\[
Q = \begin{pmatrix} I_m & O_m\\ O_m & O_m\end{pmatrix},\ \ \ \  B = \begin{pmatrix} O_m & O_m\\I_m & O_m\end{pmatrix},
\]
so that $B$ is in the form \eqref{barB}, and comparing with \eqref{QB} we see that $Q_0  = B_1 = I_m$, so that $p_0 = p_1 = m$. We thus obtain from Definition \ref{D:hd}
\[
D = m + 3 m = 2n,
\]
twice the dimension of the ambient space. In this case \eqref{strichartze} implies the Strichartz estimate   
\begin{equation}\label{stridegS}
||u||_{L^{\frac{2(n+1)}n}(\R^{n+1})} \le C(n) \left(||\vf||_{L^2(\Rn)} + ||F||_{L^{\frac{2(n+1)}{n+2}}(\R^{n+1})}\right),
\end{equation}
whose best possible character follows from the invariance of the operator in the left-hand side of \eqref{degschr} with respect to the scaling $\delta_\la(x,y,t) = (\la x,\la^3 y, \la^2 t)$.}
\end{example}


\vskip 0.2in

\begin{example}[The anomalous case]\label{E:B}
\textnormal{This final example serves to illustrate the situation discussed in the closing of Section \ref{S:truth}. In $\R^4$ consider the matrices 
\begin{align*}
Q_{k}=\begin{pmatrix}
I_{k} & O_{k\times(4-k)} \\
O_{(4-k)\times k} & O_{(4-k)\times (4-k)}
\end{pmatrix},\ \ \ \ \ \ \ 			B=\begin{pmatrix}
0 & 0 & 0 & 0 \\ 
0 & 0  & 0 & -1 \\ 
1 & 0 & 0 & 0 \\
0 & 1 & 0 & 0
\end{pmatrix},\ \ \ \ k=2,3.
\end{align*}
The matrix $B$ has eigenvalues
\[
\lambda_{1}=0,\ \ \ \ \ \ \lambda_{2}=i,\  \ \ \ \ \ \lambda_{3}=-i,
\]
with corresponding eigenvectors
\begin{align*}
v_{1}=\begin{pmatrix}
0 \\ 0 \\ 1 \\ 0
\end{pmatrix}; \ \ \ 
v_{2}=\begin{pmatrix}
0 \\ i \\ 0 \\ 1
\end{pmatrix}; \  \ \
v_{2}=\begin{pmatrix}
0 \\ -i \\ 0 \\ 1
\end{pmatrix}.
\end{align*}
In summary, we have
\begin{itemize}
\item $\operatorname{Rank}(Q_k)<n\ (=4)$; 
\item $\sigma(B)\subset i \R$, and $B$\ is not similar to a skew-symmetric matrix; 
\item $B$ is not in the form \eqref{barB},
\end{itemize}
and therefore the matrices $Q_k$ and $B$ are not treatable by either Theorem \ref{T:trB} or Theorem \ref{T:B}.
For $k=2, 3$,  denote by $D(k)$ the local homogeneous dimension, and by 
\begin{align*}
V_{2}(t)=\det \int_{0}^{t}e^{sB}Q_{2}e^{sB^{\star}}, \ \ \ \ \ \ V_{3}(t)=\det \int_{0}^{t}e^{sB}Q_{3}e^{sB^{\star}}ds
\end{align*}
the corresponding volume functions.
\begin{itemize}
\item Let $k=2$. Elementary computations lead to conclude that 
\begin{align*}
D(2)=p_{0}+3p_{1}=2+3 \cdot 2=8,\ \ \ \ \ V_{2}(t) = \frac{t^{4}}{96}(2t^{2}+\cos(2t)-1)\cong t^6,
\end{align*} 
as $t\to \infty$.
In this case, $D = 8>D_\infty = 6$, and therefore \eqref{H0} fails, whereas \eqref{HB} in Hypothesis (B) holds. Therefore, Theorem B applies.
\item Assume $k=3$. We now have
\begin{align*}
D(3)=p_{0}+3p_{1}=3+3=6,\ \ \ \ \ V_{3}(t)=\frac{t^{2}}{96}(12+t^{2})(2t^{2}+\cos(2t)-1)\cong t^6,
\end{align*}
as $t\to \infty$. In such case, $D = 6 = D_\infty$, and therefore \eqref{H0} in Hypothesis (A) holds. In this case, Theorem A applies.
\end{itemize}}

\end{example}





\section{Appendix}\label{S:prelim}

In this section we collect some known results that prove useful in the rest of the paper. First of all, we observe the following simple fact.

\begin{lemma}\label{L:QB}
The following are equivalent:
\begin{itemize}
\item[(i)] $Q$ and $B$ satisfy \emph{\textbf{(H)}};
\item[(ii)] $Q (B^\star)^k x = 0\ \forall k\ge 0\ \Longleftrightarrow\ x = 0$.
\end{itemize}
\end{lemma}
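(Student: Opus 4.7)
My plan is to prove the two implications separately by exploiting the interplay between $\ker Q$ and the cyclic subspaces generated by $B^\star$.

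For $(i) \Rightarrow (ii)$, I would start with $x \in \R^n$ satisfying $Q(B^\star)^k x = 0$ for every $k \ge 0$, and form the cyclic subspace
\[
V := \operatorname{span}\{(B^\star)^k x : k \ge 0\}.
\]
By Cayley--Hamilton, $V$ is finite-dimensional, and by construction $B^\star V \subset V$, i.e., $V$ is an invariant subspace of $B^\star$. Moreover, every generator $(B^\star)^k x$ lies in $\ker Q$ by hypothesis, so $V \subset \ker Q$. Hypothesis \textbf{(H)} then forces $V = \{0\}$, hence $x = 0$.

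For $(ii) \Rightarrow (i)$, I would argue by contradiction. Suppose there exists a nontrivial subspace $V \subset \ker Q$ with $B^\star V \subset V$. Pick any $x \in V$ with $x \ne 0$. Invariance yields $(B^\star)^k x \in V \subset \ker Q$ for every $k \ge 0$, so $Q(B^\star)^k x = 0$ for all $k \ge 0$. By hypothesis $(ii)$ this gives $x = 0$, a contradiction.

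There is no real obstacle: the only substantive ingredient is that the cyclic subspace generated by $x$ under $B^\star$ is both $B^\star$-invariant and finite-dimensional (the latter via Cayley--Hamilton, which ensures $(B^\star)^n$ is a polynomial combination of lower powers). Everything else is bookkeeping about invariant subspaces contained in $\ker Q$. The reverse direction of the equivalence in $(ii)$ (that $x=0$ implies $Q(B^\star)^k x = 0$) is trivial, so I would state it in one line.
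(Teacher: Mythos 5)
Your proof is correct, but it takes a genuinely different route from the paper's. You work directly with the invariant-subspace formulation of \textbf{(H)}: the key object is the cyclic subspace $V=\operatorname{span}\{(B^\star)^k x : k\ge 0\}$, which is $B^\star$-invariant and, under the hypothesis of (ii)'s forward implication, contained in $\operatorname{Ker} Q$; both implications then reduce to pure linear algebra (and the reverse implication is the same argument run through the contrapositive). The paper instead routes the equivalence through the controllability Gramian: it invokes the already-cited fact that \textbf{(H)} is equivalent to $Q(t)>0$ for all $t>0$, computes $\sa Q(t)x,x\da=\int_0^t |Q^{1/2}e^{sB^\star}x|^2\,ds$, and observes that this vanishes iff $Q e^{sB^\star}x\equiv 0$ on $[0,t]$, which by expanding the exponential in powers of $s$ is equivalent to $Q(B^\star)^k x=0$ for all $k\ge 0$. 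Your argument is more elementary and self-contained (it does not lean on the external equivalence \textbf{(H)} $\Leftrightarrow$ $Q(t)>0$, which the paper cites to Zabczyk), while the paper's argument has the advantage of tying condition (ii) directly to the strict positivity of $Q(t)$, which is the form in which the hypothesis is actually used throughout the rest of the paper. Both are complete proofs; no gap in yours.
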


\begin{proof}
We have observed in the introduction that \textbf{(H)}\ $\Longleftrightarrow\ Q(t)>0$ for all $t>0$. It will thus suffice to prove that 
\[
Q(t)>0, \forall t>0\ \Longleftrightarrow\ \text{(ii)}.
\]
Let $x\in \Rn$. We have
\begin{align*}
& \sa Q(t)x,x\da = \int_0^t \sa e^{sB} Q e^{sB^\star} x,x\da ds = \int_0^t |Q^{1/2} e^{sB^\star} x|^2 ds.
\end{align*}
Therefore, $\sa Q(t)x,x\da = 0$ if and only if 
\[
Q e^{sB^\star} x = \sum_{k=0}^\infty \frac{s^k Q (B^\star)^k x}{k!}= 0\ \ \ \forall\ 0\le s\le t.
\]
This is in turn equivalent to 
\[
Q(B^\star)^k x = 0,\ \ \ \forall\ k\ge 0.
\]
This shows $Q(t)>0$ is equivalent to (ii).

\end{proof}

Since in H\"ormander's work \cite{Ho} the class \eqref{A} served as a model for the general class of equations
\[
\sum_{i=1}^m X_i^2 + X_0 = 0,
\]
it seems appropriate to  note here that, having  $Q$ and $B$ in the form \eqref{QB}, is also equivalent to the well-known finite rank condition in \cite{Ho}
\begin{equation}\label{lie}
\operatorname{Rank\ Lie}[Y_0,Y_1,...,Y_n] = n,
\end{equation}
where, with $A = Q^{1/2}$, we have defined 
\begin{equation}\label{Y}
Y_0 u = \sa B x,\nabla u\da,\  \ \ Y_i u = \sum_{j=1}^m a_{ij} \p_{x_j}u,\ i=1,...,m.
\end{equation}
One should be aware that \eqref{QB}, or equivalently \eqref{lie}, does not imply the nilpotency of the Lie algebra $\operatorname{Lie}[Y_0,Y_1,...,Y_m]$ itself. For instance, for the matrices \eqref{ex},
formula \eqref{Y} gives the vector fields 
\[
Y_0 = x\p_x + y\p_z,\ \ \ Y_1 = \p_x,\ \ \ Y_2 = \p_y,
\]
which obviously satisfy \eqref{lie} since $[Y_2,Y_0] = \p_z$. But the Lie algebra is not nilpotent since $[Y_1,Y_0] = Y_1$, and so $[[Y_1,Y_0],Y_0] = Y_1$, etc. A general result in \cite[p.3]{Ka2} states: Let $X_1,...,X_k$ be analytic vector fields  on an analytic manifold $M$, and assume that the Lie algebra $L = \operatorname{Lie}[X_1,...,X_k]$ be nilpotent. If at some $p\in M$ one has $L(p) = T_p M$, then there exists a local system of coordinates around $p$, and a family of \emph{anisotropic dilations} $\delta_\la$ such that, relative to such coordinates, the vector fields $X_1,...,X_k$ have polynomial coefficients, and they are homogeneous of degree $-1$ with respect to $\delta_\la$. It was also shown in the same work that, if $X_1,...,X_k$ are dilation invariant, then Lie$[X_1,...,X_k]$ must be nilpotent.

We observe next that \eqref{mono} implies that $t\to Q(t)$ is monotonically increasing in the sense of quadratic forms (see also Lemma \ref{L:log}), so that, for $Q$ and $B$ satisfying \textbf{(H)}, it always makes sense to consider the formal limit $\underset{t\nearrow \infty}{\lim} Q(t) = Q_\infty$. However, it is not guaranteed that 
\[
Q_\infty = \int_0^\infty e^{sB}Q e^{sB^\star} ds
\]
is a well-defined positive matrix. For instance, for the Kolmogorov's operator in \cite{Kol}, which does satisfy \textbf{(H)}, we have
\[
Q = \begin{pmatrix} 1 & 0\\ 0 & 0\end{pmatrix},\ \ \ \  B = \begin{pmatrix} 0 & 0\\ 1 & 0\end{pmatrix}.
\]
An elementary computation gives for $t>0$ 
\[
Q(t) = \begin{pmatrix} t & \frac{t^2}2\\ \frac{t^2}2 & \frac{t^3}3\end{pmatrix},
\]
which clearly does not converge as $t\to \infty$ to a positive definite matrix. 

A necessary and sufficient condition was found in \cite[Sec.6]{DZ}, where it was proved that $Q_\infty$ exists if and only if the spectrum $\sigma(B)$ of the matrix $B$ satisfies 
\begin{equation}\label{quasi}
\max\{\Re(\lambda)\mid \lambda\in \sigma(B)\}< 0.
\end{equation}
Note that, when \eqref{quasi} is true, then necessarily $\tr B < 0$, and moreover 
\begin{equation}\label{quasinfty}
V(t) = \det Q(t) \ \longrightarrow\ V_\infty = \det Q_\infty > 0.
\end{equation}
For instance, for the Smoluchowski-Kramers' model in \cite{Fre} one has
\begin{equation}\label{skeigenv}
Q = \begin{pmatrix} 1 & 0\\ 0& 0\end{pmatrix},\ \ \ \ \  B = \begin{pmatrix} -2 & -2\\ 1 & 0\end{pmatrix}.
\end{equation} 
The eigenvalues of $B$ are $\la = -1-i$ and $\bar \la = -1+i$, so that \eqref{quasi} is verified. Using \cite[Prop.15.3.2]{Bernie} to find $e^{sB}$, after elementary computations we find
\[
Q(t) = \begin{pmatrix} \frac 14 + \frac{1}4 e^{-2t}(\sin 2t + \cos 2t - 2) & \frac 12 e^{-2t} \sin^2 t \\  \frac 12 e^{-2t} \sin^2 t & \frac 18(1+ e^{-2t}(\sin 2t + \cos 2t - 2))\end{pmatrix}.
\] 
This gives
\[
Q(t)\ \longrightarrow\ Q_\infty = \begin{pmatrix} \frac 14  & 0\\ 0 & \frac 18 \end{pmatrix}.
\]

A basic property, discovered in the work \cite{LP}, is the existence of a homogeneous structure (a family of anisotropic dilations) which locally osculates the intrinsic geometry induced in $\Rn$ by the matrix $Q(t)$. Since such local structure plays a critical role in the Strichartz estimates in Theorems A and B, we next recall some of its salient aspects. 
From the results in \cite{Ku, Ku2}, \cite{LP}, we know that a necessary and sufficient condition for the invariance of \eqref{A} with respect to the non-isotropic dilations $\delta_\la$ in \eqref{sdl} is that $B$ takes the special form \eqref{barB}. 
The fact that $\delta_\lambda$ are group automorphisms with respect to the Lie group law \eqref{Lie} follows from the commutation property established in \cite[p.288]{Ku2}, 
\begin{equation}\label{expdil}
e^{-\lambda^2 t \bar B}=\delta_\lambda e^{-t \bar B}\delta_{\lambda^{-1}},
\end{equation}
valid for any $\lambda>0$ and $t\in\R$.
We denote by 
\[
\overline V(t) = \det \overline Q(t) = \det \int_0^t e^{s\overline B} Q e^{s \overline B^\star} ds,
\]
the volume function generated by $Q$ and $\overline B$. From \eqref{expdil}, and the fact that from \eqref{barB} we have $\tr \bar B = 0$, one infers that 
\begin{equation}\label{voldil}
\overline V(t) = \gamma_0\ t^{D},\ \ \ \ \ t>0,
\end{equation}
where $D$ is given by \eqref{hd}, and $\gamma_0 = \overline V(1) > 0$, see \eqref{vol}. 

If now $Q$ and $B$ are in the canonical form \eqref{QB}, and $\overline B$ is obtained from $B$ as in \eqref{barB}, then \cite[(3.14) and Remark 3.1]{LP} imply the following result.

\begin{proposition}\label{P:asy}
One has
$\underset{t\to 0^+}{\lim}\ \frac{V(t)}{\overline V(t)} = 1$.
\end{proposition}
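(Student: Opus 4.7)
The plan is to realize both $V(t)$ and $\overline V(t)$ as evaluations of a common scaling limit under the anisotropic dilations $\delta_\lambda$ in \eqref{sdl}, and then use the homogeneity $\overline V(t)=\gamma_0 t^D$ to read off the constant. Let $D_\lambda=\mathrm{diag}(\lambda I_{p_0},\lambda^3 I_{p_1},\ldots,\lambda^{2r+1}I_{p_r})$, so that $D_\lambda^\star=D_\lambda$. The first observation is that $D_\lambda^{-1}Q D_\lambda^{-1}=\lambda^{-2}Q$, which follows immediately from the block structure of $Q$ in \eqref{QB}: only the $p_0\times p_0$ corner is nonzero, and $D_\lambda$ acts there as $\lambda I_{p_0}$. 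A change of variable $s=\lambda^2 u$ in \eqref{Ds} then yields
\begin{equation*}
D_\lambda^{-1} Q(\lambda^2 \tau) D_\lambda^{-1}=\int_0^\tau e^{u B_\lambda}\,Q\,e^{u B_\lambda^\star}\,du,\qquad B_\lambda:=\lambda^2 D_\lambda^{-1} B D_\lambda.
\end{equation*}

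Next I would show that $B_\lambda\to\overline B$ as $\lambda\to 0^+$. Conjugation by $D_\lambda$ multiplies the $(i,j)$-block (indexed by the $p_i$'s) by $\lambda^{2(j-i)}$. Reading off \eqref{QB} block by block: each sub-diagonal block $B_j$, sitting at position $(j,j-1)$, contributes $\lambda^2\cdot\lambda^{-2}B_j=B_j$, while every $\star$ block sits at some position $(i,j)$ with $j\ge i$ and therefore contributes $\lambda^{2(j-i+1)}\star$ with strictly positive exponent. Hence only the $B_j$'s survive in the limit, giving exactly the matrix $\overline B$ of \eqref{barB}. Uniform convergence on compact $u$-intervals of the analytic integrand $e^{uB_\lambda} Q\, e^{u B_\lambda^\star}$ lets me pass to the limit under the integral sign:
\begin{equation*}
\lim_{\lambda\to 0^+} D_\lambda^{-1} Q(\lambda^2\tau) D_\lambda^{-1}=\int_0^\tau e^{u\overline B}\,Q\,e^{u\overline B^\star}\,du=\overline Q(\tau).
\end{equation*}

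Finally, taking determinants and using $(\det D_\lambda)^2=\lambda^{2D}$ (which is precisely the content of Definition \ref{D:hd}) turns the matrix limit into $\lambda^{-2D}V(\lambda^2\tau)\to\overline V(\tau)=\gamma_0\tau^D$. Specialising $\tau=1$ and setting $t=\lambda^2$ yields $V(t)/(\gamma_0 t^D)\to 1$ as $t\to 0^+$, which is exactly the proposition since $\overline V(t)=\gamma_0 t^D$.

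The one point to handle with care is the combinatorial verification that in the canonical form \eqref{QB} every $\star$ block lies weakly above the block diagonal, so that $j\ge i$ and $2(j-i+1)\ge 2$; this is exactly what forces $B_\lambda$ to shed its entire $\star$ component in the limit, leaving only the rigid sub-diagonal skeleton $\overline B$. Everything else reduces to the elementary scaling properties of $D_\lambda$, the support of $Q$, and the continuity of the matrix exponential in its parameter.
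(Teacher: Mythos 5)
Your argument is correct, and it is worth noting that the paper does not actually prove Proposition \ref{P:asy} at all: it simply invokes \cite[(3.14) and Remark 3.1]{LP}, where the asymptotics of the Gramian near $t=0$ are obtained by a direct expansion of its block entries. Your proof is therefore a genuinely self-contained alternative, organized around the osculating dilations rather than around entrywise Taylor expansions. Each step checks out: the identity $D_\lambda^{-1}QD_\lambda^{-1}=\lambda^{-2}Q$ uses only that $Q$ is supported in the $p_0\times p_0$ corner; the substitution $s=\lambda^2 u$ together with $D_\lambda^{-1}e^{\lambda^2 uB}D_\lambda=e^{uB_\lambda}$ correctly produces $D_\lambda^{-1}Q(\lambda^2\tau)D_\lambda^{-1}=\int_0^\tau e^{uB_\lambda}Qe^{uB_\lambda^\star}\,du$ with $B_\lambda=\lambda^2D_\lambda^{-1}BD_\lambda$; the block bookkeeping is right, since in the canonical form \eqref{QB} the $(i,j)$-block of $B_\lambda$ carries the factor $\lambda^{2(j-i+1)}$, which equals $1$ exactly on the subdiagonal $j=i-1$ and has strictly positive exponent on every $\star$ block (all of which sit at $j\ge i$); and $\det D_\lambda=\lambda^{D}$ is indeed the content of \eqref{hd}. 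Passing to the limit under the integral is justified by uniform convergence of the analytic integrand on $[0,\tau]$, and the positivity $\gamma_0=\overline V(1)>0$ (i.e., that $Q$ and $\overline B$ still satisfy \textbf{(H)}) is supplied by the paper in \eqref{voldil}. What your route buys is transparency: it exhibits $\overline Q(\tau)$ as the literal scaling limit of $Q(t)$ under $\delta_\lambda$, which explains \emph{why} the local homogeneous dimension governs the small-time volume, whereas the cited computation in \cite{LP} yields the same conclusion (and in fact a quantitative $O(t)$ error) by expanding the Gramian entry by entry.
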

Proposition \ref{P:asy} and \eqref{voldil} allow to conclude that there exists $\gamma>0$ such that  
\begin{equation}\label{vicinoa0}
V(t) \ge \gamma t^D,\ \ \ \ \ \ 0<t\le 1.
\end{equation}
As $t\to \infty$, the function $V(t)$ exhibits a highly variable behavior, which need not follow a power-law form (see the example in \eqref{skeigenv}).
In the next result we have combined \cite[Prop. 3.1]{GTma} with \cite[Prop. 2.3]{BGT}.

\begin{proposition}\label{P:boom0}
The following is true:
\begin{itemize}
\item[(i)] If $\tr B \ge 0$, then there exists a constant $c_1>0$ such that $V(t) \geq c_1 t^2$ for all $t\geq 1$.
\item[(ii)] If $\tr B \ge 0$ and furthermore $\max\{\Re(\lambda)\mid \lambda\in \sigma(B)\}=L_0>0$, then there exists a constant $c_0$ such that $V(t)\geq c_0 e^{2 L_0 t}$ for all $t\geq 1$.
\item[(iii)] Finally, if $\max\{\Re(\lambda)\mid \lambda\in \sigma(B)\}\geq 0$, then there exists a constant $c_0>0$ such that $V(t)\geq c_0 t$, for every $t\ge 1$.
\end{itemize}
\end{proposition}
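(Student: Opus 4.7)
The plan is to establish the three bounds by combining the additive/monotone structure of $Q(t)$ with Hypothesis \textbf{(H)}, which guarantees $Q(t)>0$ for every $t>0$. The starting point common to all three parts is the change of variable $s=k+u$ in the defining integral of $Q(t)$, which yields, for $t\ge 1$ and $N=\lfloor t\rfloor$, the lower bound (in the sense of quadratic forms)
\begin{equation*}
Q(t)\ \ge\ \sum_{k=0}^{N-1} e^{kB}Q(1)e^{kB^\star}.
\end{equation*}

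For part (i), the plan is to apply the Minkowski determinant inequality $\det(A+B)^{1/n}\ge \det(A)^{1/n}+\det(B)^{1/n}$ iteratively to the positive semidefinite summands above. Since $\det(e^{kB}Q(1)e^{kB^\star})=e^{2k\tr B}V(1)$, we obtain
\begin{equation*}
V(t)^{1/n}\ \ge\ V(1)^{1/n}\sum_{k=0}^{N-1}e^{2k\tr B/n}.
\end{equation*}
When $\tr B\ge 0$ each summand is $\ge 1$, giving $V(t)\ge V(1)\,N^n \ge c\, t^{n}\ge c_1 t^2$ for $t\ge 1$, since $n\ge 2$.

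For parts (ii) and (iii), the plan is to exploit an eigenvector of $B^\star$ corresponding to an eigenvalue with maximal real part $L_0$. The key preliminary observation is that, by Weyl monotonicity, $Q(t)\ge Q(1)>0$ forces $\lambda_{\min}(Q(t))\ge\lambda_{\min}(Q(1))>0$, whence
\begin{equation*}
V(t)\ \ge\ \lambda_{\min}(Q(1))^{n-1}\,\lambda_{\max}(Q(t)),
\end{equation*}
and it suffices to lower bound $\langle Q(t)v,v\rangle$ for a single well-chosen unit vector $v$. Pick $\lambda\in\sigma(B)=\sigma(B^\star)$ with $\Re\lambda = L_0$, and let $W\subset\mathbb{R}^n$ be the real invariant subspace of $B^\star$ spanned by the real and imaginary parts of an associated eigenvector (one-dimensional if $\lambda\in\mathbb{R}$, two-dimensional if $\lambda\notin\mathbb{R}$). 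Hypothesis \textbf{(H)} forces $W\not\subset\ker Q$. On $W$, the flow $e^{sB^\star}$ acts as $e^{sL_0}R(s\mu)$ with $\mu=\Im\lambda$ and $R$ a planar rotation (the identity if $\mu=0$), so picking $v\in W$ with $Q^{1/2}v\ne 0$ gives
\begin{equation*}
\langle Q(t)v,v\rangle\ =\ \int_0^t e^{2sL_0}\,|Q^{1/2}R(s\mu)v|^2\,ds.
\end{equation*}
The integrand is nonnegative and periodic in $s$, with period-average $\tfrac12(|Q^{1/2}x|^2+|Q^{1/2}y|^2)>0$ when $\mu\ne 0$, since $W\not\subset\ker Q$; in the real case it is simply $|Q^{1/2}v|^2>0$. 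Restricting the integral to a single period near $s=t$ yields $\langle Q(t)v,v\rangle\ge c\,e^{2tL_0}$ when $L_0>0$, which proves (ii); the same computation with $L_0=0$ yields the linear bound $\langle Q(t)v,v\rangle\ge c\,t$, and combined with the exponential bound when $L_0>0$, proves (iii).

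The principal obstacle is the non-real case $\mu\ne 0$: one must verify that $W$ is genuinely two-dimensional (this follows because a non-real eigenvalue cannot have a real eigenvector, so the real and imaginary parts of any eigenvector are $\mathbb{R}$-linearly independent) and that the oscillatory factor $|Q^{1/2}R(s\mu)v|^2$ does not wash out the averaging lower bound. The averaging argument, applied on the terminal period $[t-T,t]$ with $T=2\pi/|\mu|$, converts the periodic non-vanishing statement into the desired integral bound, with the positivity ultimately rooted in the invariant subspace form of \textbf{(H)}.
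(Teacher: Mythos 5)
The paper does not actually prove this proposition: it is imported wholesale from \cite[Prop.\ 3.1]{GTma} and \cite[Prop.\ 2.3]{BGT}, so there is no in-paper argument to compare against. Your proof is self-contained and, as far as I can check, correct. For (i), the chain $Q(t)\ge\sum_{k=0}^{\lfloor t\rfloor-1}e^{kB}Q(1)e^{kB^\star}$ (iterating \eqref{mono}), Weyl monotonicity of the determinant on the positive-semidefinite order, and Minkowski's determinant inequality give $V(t)^{1/n}\ge V(1)^{1/n}\lfloor t\rfloor$, i.e.\ the stronger bound $V(t)\gtrsim t^{n}$, which implies the stated $t^{2}$ since $n\ge2$ (and is consistent with Proposition \ref{P:ii} and Theorem \ref{T:B}). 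For (ii)--(iii), the reduction $V(t)\ge\lambda_{\min}(Q(1))^{n-1}\lambda_{\max}(Q(t))$ is valid because $Q(t)\ge Q(1)>0$, and the estimate of $\langle Q(t)v,v\rangle=\int_0^t|Q^{1/2}e^{sB^\star}v|^2\,ds$ along the two-dimensional (or one-dimensional) real invariant subspace $W$ of $B^\star$ attached to an eigenvalue of maximal real part is exactly where \textbf{(H)} enters, via $W\not\subset\ker Q$; the periodic-averaging step is sound because $\{e^{sB^\star}v\}$ sweeps a spanning set of $W$ over one period, so the period-integral is strictly positive. Note also that your arguments for (ii) and (iii) never use the hypothesis $\tr B\ge0$, so you prove slightly more than is stated.

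Two routine points you should make explicit if you write this up. First, the ``terminal period'' device $\int_{t-T}^{t}$ requires $t\ge T=2\pi/|\mu|$; for $1\le t<T$ you must fall back on $\langle Q(t)v,v\rangle\ge\langle Q(1)v,v\rangle>0$ together with the boundedness of $e^{2L_0t}$ (resp.\ of $t$) on $[1,T]$, adjusting the constant. Second, in the non-real case the period average is $\tfrac12\bigl(|Q^{1/2}v|^2+|Q^{1/2}Jv|^2\bigr)$ for the specific $v$ chosen (with $J$ the induced complex structure on $W$), not literally $\tfrac12(|Q^{1/2}x|^2+|Q^{1/2}y|^2)$ unless you take $v=x$; either choice works since $\{v,Jv\}$ spans $W$. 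Neither point is a genuine gap.
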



\bibliographystyle{amsplain}

\end{document}